\newcommand{\fraka}{\mathfrak a}
\newcommand{\frakm}{\mathfrak m}
\newcommand{\scrptE}{\mathcal E}
\newcommand{\scrptF}{\mathcal F}
\newcommand{\scrptH}{\mathcal H}
\newcommand{\scrptI}{\mathcal I}
\newcommand{\scrptJ}{\mathcal J}
\newcommand{\scrptK}{\mathcal K}
\newcommand{\scrptO}{\mathcal O}
\newcommand{\scrptQ}{\mathcal Q}
\newcommand{\scrptR}{\mathcal R}
\newcommand{\scrptT}{\mathcal T}
\newcommand{\scrptV}{\mathcal V}
\newcommand{\bbP}{\mathbb P}
\newcommand{\bbS}{\mathbb S}
\newcommand{\bbZ}{\mathbb Z}
\def\into{{\rightarrowtail}}
\def\onto{\twoheadrightarrow}
\newcommand{\pseudofrac}[3]{%
  \begin{subarray}{l}#1\\#2\\#3\end{subarray}%
}
\DeclareMathOperator{\Hom}{Hom}
\DeclareMathOperator{\End}{End}
\newcommand{\iso}{\simeq}
\newcommand{\niso}{\cong}
\DeclareMathOperator{\D}{D}
\DeclareMathOperator{\R}{R}
\DeclareMathOperator{\Ext}{Ext}
\DeclareMathOperator{\Tor}{Tor}
\DeclareMathOperator{\coker}{coker}
\DeclareMathOperator{\im}{im}
\DeclareMathOperator{\drsum}{\oplus}
\DeclareMathOperator{\Drsum}{\bigoplus}
\DeclareMathOperator{\tot}{tot}
\DeclareMathOperator{\tnsr}{\otimes}
\DeclareMathOperator{\TnsrAlg}{\bigotimes}
\DeclareMathOperator{\ExtAlg}{\bigwedge}
\DeclareMathOperator{\wdg}{\wedge}
\DeclareMathOperator{\Sym}{Sym}
\DeclareMathOperator{\GL}{GL}
\DeclareMathOperator{\fmod}{mod}
\DeclareMathOperator{\Spec}{Spec}
\DeclareMathOperator{\Proj}{Proj}
\DeclareMathOperator{\height}{ht}
\theoremstyle{definition}
\newtheorem{defn}{Definition}[section]
\newtheorem{definition}[defn]{Definition}
\newtheorem{remark}[defn]{Remark}
\newtheorem{example}[defn]{Example}
\theoremstyle{plain}
\newtheorem{proposition}[defn]{Proposition}
\newtheorem{theorem}[defn]{Theorem}
\newtheorem*{theorem*}{Theorem}
\newtheorem{lemma}[defn]{Lemma}
\theoremstyle{remark}
\begin{document}

\title[On Cohomology of Complexes Associated with a Generic Matrix]{On Cohomology of Complexes Associated with a Generic Matrix}


\author[Author]{Mikhail Gudim} 
\address{} \email{}

\subjclass{} 

\date{\today} 

\maketitle

\begin{abstract}

In the appendix of the famous book "Commutative Algebra with a View Towards Algebraic Geometry" one can find an infinite family of complexes indexed by integers. This family includes Eagon-Northcott and Buschsbaum-Rim complexes. The objective of this paper is to study this family, and, in particular, refine the knowledge of its cohomology.

First, we obtain these complexes from the derived images of twists of the Koszul complex on the projective space. This idea  apparently goes back to Kempf [1970]. Taking this "geometric" point of view, we interpret the cohomology of these complexes as the cohomology of certain vector bundles on the projective space, and proceed with calculations. 

Finally, we put the above results in the realm of tilting theory: non-exactness of this family in certain regions can be seen as a failure of the exceptional sequence of line bundles on the projective space to lift to an exceptional sequence on a certain vector bundle. This observation creates a curious contrast with the results of Buchweitz-Leushke-Van den Bergh, stating that the exceptional sequence of twisted differential forms does lift to an exceptional sequence on the same vector bundle.

\end{abstract}

\makeatletter
\@starttoc{toc}
\makeatother

\section{Introduction}
In this paper the main object of study is the family of complexes $D^{\bullet}(i)$ (see Definition~\ref{def:D}) and their cohomology.  It was noted by Buchsbaum and Eisenbud in~\cite{Buch-Eis} that Eagon-Northcott and Buchsbaum-Rim complexes fit  into the family $D^{\bullet}(i)$. The same complexes were also discovered independently by David Kirby (\cite{Kirb}). See Section A2.6 in~\cite{Eis} for the full story. Recently, the family $D^{\bullet}(i)$ was generalized to multilinear setting in \cite{Berk-Erm-Kum-Sam}.

After necessary preparations (Section~\ref{sec:tech}), we use the idea of Kempf (from his unpublished thesis) to construct the complexes $D^{\bullet}(i)$ as the pushforward of Koszul complexes on the total space of a trivial vector bundle over the projective space. This approach immediately allows us to prove the acyclicity properties of the complexes $D^{\bullet}(i)$ (Theorem~\ref{thm:main}). Most of the parts, if not all, of Theorem~\ref{thm:main} were probably known, but were either scattered across the literature or existed in the form of mathematical folklore. In any case, we think it is a good idea to have all these results gathered in one place. Perhaps, it will be interesting to the reader to compare this construction of complexes $D^{\bullet}(i)$ with Section A2.6 in~\cite{Eis} where these complexes are described in purely algebraic manner. 

Our geometric point of view allows to interpret the cohomology of $D^{\bullet}(i)$ as the cohomology of a certain vector bundle. In Section~\ref{sec:StrRep} we calculate this cohomology using the (corollary of) Borell-Bott-Weil theorem to get the answer as a graded representation. This calculation does not reveal the structure of the cohomology as an $R$-module, however still gives us some new results (Proposition~\ref{prop:coh1}), most important of which is that the cohomology has to be non-zero, whenever allowed by Theorem~\ref{thm:main}.

In Section~\ref{sec:EquivStr} we use similar technique as in~\cite{Gud} to attempt to understand the module structure of the cohomology modules better. In particular, we can describe the lattice of cohomology modules (Theorem \ref{thm:lattice}). This is in some sence an answer, but it is far from satisfactory, because we could not obtain the resolutions, only lower bound on the projective dimension (Proposition~\ref{prop:ProjDim}).

We conclude by viewing the whole story from tilting theory perspective in Section~\ref{sec:Tilt}. After very brief overview of the necessary notions, we see that non-exactness of the family $D^{\bullet}(i)$ is the obstruction for exceptional collection of line bundles on the projective space to lift to an exceptional collection on the total space of vector bundle. However, Theorem C in~\cite{Buch-Leu-Van-den-Bergh} shows that the exceptional collection consisting of differential on the projective space does lift to an exceptional collection on the same vector bundle. As the two collections are related by a series of mutations, it is natural to ask (and this is what Ragnar-Olaf Buchweitz asked me): after which step in mutating the sequence of differential forms on the projective space the lift of the resulting sequence to the vector bundle fails to be exceptional? As it turns out, the failure already happens at the first step. Moreover, we display the two-parameter family of complexes $D^{\bullet}(i,k)$ which includes the family $D^{\bullet}(i)$. If one adopts the paradigm that the family  $D^{\bullet}(i)$ corresponds to the exceptional sequnce of line bundles on the projective space, then, roughly speaking, the family $D^{\bullet}(i,k)$ corresponds to the exceptional sequences on the projective space obtained at different stages of mutation process.

I would like to thank Ragnar-Olaf Buchweitz for pointing out this problem to me, explaining the Kempf's method and helpful suggestions. Also, I would like to thank Steven Sam for helpful suggestions. 

\section{Technical Results}
\label{sec:tech}

\subsection{Notation and Conventions}
Let $\Bbbk$ be an algebraically closed field of characteristic $0$. Let $V$ and $W$ be finite-dimensional vector spaces over $\Bbbk$ and fix bases $(e_1, \cdots e_f)$ for $V$ and $(x_1, \cdots x_g)$ for $W$. Throughout this paper we assume that $f \geq g$. We will think of elements of $V$ and $W$ as having degree $1$. Further, let $R:= \Sym (\Hom (V, W)^*) \niso \Sym (V \tnsr W^*) \iso \Bbbk [\{t_{ij}\}]$ (the last isomorphism is $e_j \tnsr x_i^* \mapsto t_{ij}$), $F:= R \tnsr V$, $G:= R \tnsr W$, $S:=\Sym G$ and let $H:= \GL(V) \times \GL (W^*)$.

In Subsections~\ref{subsec:Coass},~\ref{subsec:Rep},~\ref{subsec:Pieri} and~\ref{subsec:Emb} we work with just one $\Bbbk$-vector space $U$ of dimension $n$. We will often abbreviate $\bbS_{\lambda} U$ just as $\bbS_{\lambda}$. Let $\lambda=(\lambda_1, \lambda_2, \cdots \lambda_m)$ be some partition. By $\ExtAlg^{\lambda}V$ we will mean the tensor product $\ExtAlg^{\lambda_1}V \tnsr \ExtAlg^{\lambda_2}V \tnsr \cdots \ExtAlg^{\lambda_m}V$ and similarly for the symmetric powers.

\subsection{Coassociativity of the Exterior Algebra}
\label{subsec:Coass}
See Section I of \cite{Ak-Buch-Wey} for details of this section. The exterior algebra $\ExtAlg U$ is a graded-commutative Hopf algebra. Let $l:=(l_1, \cdots l_t)$, $a:=(a_1, \cdots a_t)$ and $b:=(b_1, \cdots b_t)$ be vectors with all integer components. Consider the map

\[
\Phi(l,a) \colon \ExtAlg^{l}U= \ExtAlg^{l_1}U \tnsr \cdots \ExtAlg^{l_t}U \to (\ExtAlg^{l_1-a_1}U \tnsr \cdots \ExtAlg^{l_t-a_t}U) \tnsr (\ExtAlg^{a_1}U \tnsr \cdots \ExtAlg^{a_t}U)=\ExtAlg^{l-a}U \tnsr \ExtAlg^{a}U 
\]

The map $\Phi(l,a)$ is a tensor product of the appropriate components of comultiplications followed by a permutation of factors.

As a consequence of coassociativity of $\ExtAlg U$ the following diagram commutes:

\begin{figure}[h!]

\begin{tikzpicture}
\matrix(m) [matrix of math nodes, 
row sep=4.0em, column sep=8.0em, 
text height=1.5ex, text depth=0.25ex]
{\ExtAlg^l U & \ExtAlg^{l-b}U \tnsr \ExtAlg^b U\\
 & \ExtAlg^{l-b-a}U \tnsr \ExtAlg^{a}U \tnsr \ExtAlg^bU \\
\ExtAlg^{l-a}U \tnsr \ExtAlg^aU & \ExtAlg^{l-a-b}U \tnsr \ExtAlg^bU \tnsr \ExtAlg^aU \\};

\path[->,font=\scriptsize] 
(m-1-1) edge node[above]{$\Phi(l,b)$}(m-1-2)
(m-3-1) edge node[above]{$\Phi(l-a,b) \tnsr 1_{\ExtAlg^aU}$}(m-3-2)
(m-1-1) edge node[right]{$\Phi(l,a)$}(m-3-1)
(m-1-2) edge node[right]{$\Phi(l-b,a) \tnsr 1_{\ExtAlg^bU}$} (m-2-2) 
(m-3-2) edge node[right]{$\sigma$} node[left]{$\niso$}(m-2-2);
\end{tikzpicture}
\caption{}
\label{fig:coass}
\end{figure}
\FloatBarrier

The map $\sigma$ is a permutation of factors.

\subsection{Representations of $\GL(U)$}
\label{subsec:Rep}

Every finite-dimensional polynomial (more generally, rational) representation of $\GL(U)$ decomposes into direct sum of irreducible ones. Irreducible polynomial representations of degree $d$ of $\GL(U)$ are indexed by partitions $\lambda$ of $d$. The irreducible representation corresponding to partition $\lambda$ is called the "Schur functor corresponding to $\lambda$ applied to $U$" and is denoted by $\bbS_{\lambda}U$ which we will simply abbreviate as $\bbS_{\lambda}$   See Section 8.2 of ~\cite{Fult} for a reference.

Let $\lambda:=(\lambda_1, \cdots \lambda_n)$ be a partition. By $D(\lambda)$ we will mean the Young diagram of $\lambda$ which we draw as a set of boxes in the plane. For example, the Young diagram of the partition $(3,2,2,1)$ is

\[
{\scriptsize\young(\hfil \hfil \hfil,\hfil \hfil,\hfil \hfil,\hfil)}
\]

The conjugate partition of $\lambda$ is the partition $\tilde{\lambda}=(\tilde{\lambda}_1, \cdots \tilde{\lambda}_t)$ where $\tilde{\lambda}_i$ is the number of boxes in the $i$-th column of $D(\lambda)$. For example, the conjugate partition of $(3,2,2,1)$ is the partition $(4,3,1.0)$ and its Young diagram is:

\[
{\scriptsize\young(\hfil \hfil \hfil \hfil,\hfil \hfil \hfil,\hfil)}
\]

Let us recall:

\begin{theorem}
There is only one representation occurring in both $\Sym_{\lambda} U$ and $\ExtAlg^{\tilde{\lambda}} U$. This common representation is the Schur functor $\bbS_{\lambda}$ and can be realized 

\begin{enumerate}
\item As an image of the (unique) map $\Sym_{\lambda} U \to \underset{(i,j) \in D(\lambda)}{\TnsrAlg} U(i,j) \to \ExtAlg^{\tilde{\lambda}} U$ and thus as a quotient of $\Sym_{\lambda} U$, or

\item as an image of the (unique) map $\ExtAlg^{\tilde{\lambda}} U \to \underset{(i,j) \in D(\lambda)}{\TnsrAlg} U(i,j) \to \Sym_{\lambda} U$ and thus as a quotient of $\ExtAlg^{\tilde{\lambda}} U$
\end{enumerate}

Here $U(i,j)$ means the copy of $U$ indexed by the box $(i,j)$ of $D(\lambda)$. The first map in (1) is the tensor product of components of comultiplications in the symmetric algebra: $\Sym_{\lambda_i}U \to \underset{1 \leq j \leq \lambda_i}{\TnsrAlg}U(i,j)$ and the second map is the tensor product of projections $\underset{1 \leq i \leq \tilde{\lambda}_j}{\TnsrAlg}U(i,j) \to \ExtAlg^{\tilde{\lambda}_j}U$
\end{theorem}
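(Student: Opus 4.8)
The plan is to realize $\bbS_\lambda$ as the image of a single "straightening" composite $\Sym_\lambda U \to \bigotimes_{(i,j)\in D(\lambda)} U(i,j) \to \ExtAlg^{\tilde\lambda}U$, and then to prove three things: (a) this composite is nonzero and $\GL(U)$-equivariant; (b) the multiplicity-one statement, namely that $\Sym_\lambda U$ and $\ExtAlg^{\tilde\lambda}U$ share exactly one irreducible constituent; and (c) that a $\GL(U)$-equivariant map between two representations with a unique common constituent has image precisely that constituent, so that the image of the composite is forced to be $\bbS_\lambda$, and that it agrees with the classical Schur functor. The maps in (1) and (2) are transposes of one another (under the natural self-duality pairings $\Sym^k U \tnsr \Sym^k U^* \to \Bbbk$ and $\ExtAlg^k U \tnsr \ExtAlg^k U^* \to \Bbbk$, or more simply because both are built from comultiplication/multiplication in the two Hopf algebras), so once (1) is established, (2) follows by dualizing, and the common image is the same. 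Hence I would concentrate on the direction (1).

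First I would set up the composite carefully. The first arrow is the tensor product over rows of the iterated comultiplications $\Sym^{\lambda_i}U \to U(i,1)\tnsr\cdots\tnsr U(i,\lambda_i)$ in the (co)commutative Hopf algebra $\Sym U$; this is the canonical map and is manifestly $\GL(U)$-equivariant and injective (comultiplication in $\Sym U$ is injective in char $0$, as can be seen on the monomial basis). The second arrow is the tensor product over columns of the projections $U(1,j)\tnsr\cdots\tnsr U(\tilde\lambda_j,j)\to\ExtAlg^{\tilde\lambda_j}U$; this is surjective and equivariant. So the composite $\phi_\lambda\colon\Sym_\lambda U\to\ExtAlg^{\tilde\lambda}U$ is an equivariant map, and I must show it is nonzero: testing on the "highest weight" tensor — the image of a product of the top exterior-monomials read down the columns — one computes directly that $\phi_\lambda$ sends a suitable vector to a nonzero element of $\ExtAlg^{\tilde\lambda}U$; this is a finite combinatorial check on a single monomial and is where the explicit Young-diagram bookkeeping lives.

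Next comes the representation-theoretic core. By complete reducibility (char $0$), both $\Sym_\lambda U$ and $\ExtAlg^{\tilde\lambda}U$ decompose into irreducibles $\bbS_\mu$. The classical fact — which I would invoke or reprove via the Pieri/Littlewood–Richardson rule — is that $\bbS_\lambda$ occurs in $\Sym^{\lambda_1}U\tnsr\cdots\tnsr\Sym^{\lambda_m}U$ with multiplicity one and that it is the unique common constituent of $\Sym_\lambda U$ and $\ExtAlg^{\tilde\lambda}U$; concretely, $\mu$ occurs in $\Sym_\lambda U$ iff the Young diagram $D(\mu)$ can be filled to a semistandard tableau of content $\lambda$, and in $\ExtAlg^{\tilde\lambda}U$ iff $D(\mu)$ can be filled to a "column-semistandard" tableau of content $\tilde\lambda$; the only $\mu$ satisfying both is $\mu=\lambda$, with multiplicity one on each side. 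Granting this, any nonzero equivariant $\phi_\lambda$ must carry the (unique) copy of $\bbS_\lambda$ in the source onto the (unique) copy of $\bbS_\lambda$ in the target — since by Schur's lemma the restriction to any other isotypic component of the source is zero, and the $\bbS_\lambda$-component cannot go to zero lest $\phi_\lambda$ vanish — so $\im\phi_\lambda=\bbS_\lambda$, realized simultaneously as a quotient of $\Sym_\lambda U$ and a submodule of $\ExtAlg^{\tilde\lambda}U$.

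The main obstacle is pinning down the uniqueness-of-common-constituent statement and, hand in hand with it, verifying that the single composite map is nonzero rather than accidentally zero: the combinatorics of which $\bbS_\mu$ sit inside $\Sym_\lambda U$ versus $\ExtAlg^{\tilde\lambda}U$ is exactly the Cauchy/Pieri content, and checking that the diagram-shaped composite does not kill the highest weight vector requires a careful sign/coefficient computation in the exterior algebra. Everything else — equivariance, injectivity of comultiplication, surjectivity of projection, the Schur-lemma conclusion, and the dual statement (2) — is formal. I would therefore present (b) as a citation to Fulton's book (Section 8 of~\cite{Fult}) together with a one-line highest-weight-vector computation establishing (a) nonvanishing, and deduce (c) immediately.
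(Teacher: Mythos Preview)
The paper does not prove this theorem at all: it is introduced with ``Let us recall'' and stated without proof as a standard fact from the representation theory of $\GL(U)$, with an implicit reference to Section~8.2 of~\cite{Fult}. So there is nothing in the paper to compare your argument against.

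That said, your outline is the standard textbook route and is essentially correct. One small imprecision: the composites in (1) and (2) are not literally transposes of one another. The map in (1) is (symmetric comultiplication) followed by (exterior multiplication), while the map in (2) is (exterior comultiplication) followed by (symmetric multiplication); dualizing (1) gives a map $(\ExtAlg^{\tilde\lambda}U)^*\to(\Sym_\lambda U)^*$, which only becomes (2) after identifying $U$ with $U^*$ or invoking a nondegenerate form. It is cleaner simply to run the identical argument for (2) directly---nonvanishing on a highest-weight vector, plus the same multiplicity-one fact---rather than to appeal to duality. Also, your claimed criterion ``$\mu$ occurs in $\ExtAlg^{\tilde\lambda}U$ iff $D(\mu)$ can be filled to a column-semistandard tableau of content $\tilde\lambda$'' needs to be stated more carefully (the correct statement involves skew shapes that are unions of vertical strips, via iterated Pieri), but the conclusion that $\lambda$ is the unique common constituent is right and is exactly what Fulton proves.
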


If we choose a basis for $U$ say $u_1, \cdots u_n$ then for each semistandard tableaux $T$ one can write down an element $u_T$ of $\ExtAlg^{\tilde{\lambda}}U$ which is a tensor product of wedge products of elements, the $i$-th factor being the wedge product of basis elements (in order) occuring in the $i$-th column of $T$. For example, the standard tableaux {\scriptsize\young(113,24)} defines the element $\pseudofrac{u_1}{\wdg}{u_2} \tnsr \pseudofrac{u_1}{\wdg}{u_4} \tnsr u_3$ of $\ExtAlg^{(2,2,1)}U$

These elements are linearly independent and span the subspace of $\ExtAlg^{\tilde{\lambda}}U$ isomorphic to $\bbS_{\lambda}$. Moreover, the images of those elements under the composite map in (2) above form a basis for the subspace of $\Sym_{\lambda}U$ isomorphic to  $\bbS_{\lambda}$.

According to our grading conventions, $\bbS_{\lambda}$ is a graded vector space concentrated in the degree $|\lambda|$- the number of boxes in $D(\lambda)$. For example, both $\bbS_{(3,2,2,1)}$ and $\bbS_{(4,3,1.0)}$ are of degree $3+2+2+1=4+3+1+0=8$.

For any vector space $U$ we adopt the convention that $U^{\tnsr m} = (U^*)^{\tnsr (-m)}$. Any irreducible rational, but not polynomial representation of $\GL(U)$ is of the form $\bbS_{\lambda} \tnsr (\ExtAlg^n U)^{\tnsr m}$ for some $m<0$. (see \cite{Wey} Proposition 2.2.1, Theorem 2.2.9 and Theorem 2.2.10). Now with any non-increasing sequence $(\lambda_1, \cdots \lambda_n)$ (where $\lambda_i$'s do not have to be non-negative) we can associate an irreducible \emph{rational} representation of $U$ by extending the rule $\bbS_{(\lambda_1, \cdots \lambda_n)} \tnsr (\ExtAlg^n U)^{\tnsr m} \niso \bbS_{(\lambda_1+m, \cdots \lambda_n+m)}$ to all values $m \in \bbZ$.

We will also use the canonical isomorphism (see Exercise 2.19 in \cite{Wey})

\[
\bbS_{(\lambda_1, \cdots \lambda_n)}(U^*) \niso \bbS_{(-\lambda_n, \cdots -\lambda_1)} U.
\]  

where $(\lambda_1, \cdots \lambda_n)$ is any non-increasing sequence of integers.

\subsection{Pieri Inclusions}
\label{subsec:Pieri}

For a partition $\lambda$ define $VS(\lambda, k)$ (here "$VS$"  stands for "vertical strip") to be the set of all partitions obtained from $\lambda$ by adjoining $k$ boxes, no two in the same row. Similarly, let $HS(\lambda, k)$ (here "$HS$" stands for "horizontal strip") to be the set of all partitions obtained from $\lambda$ by adjoining $k$ boxes with no two in the same column. Let us recall the well-known Pieri Formulas (see Section 6.1 of \cite{Fult-Harr}):

\[
\bbS_{\lambda} \tnsr \Sym_k U \niso \underset{\eta \in HS(\lambda, k)}{\Drsum} \bbS_{\eta}
\] 

and

\[
\bbS_{\lambda} \tnsr \ExtAlg^k U \niso \underset{\eta \in VS(\lambda, k)}{\Drsum} \bbS_{\eta}
\] 

For a fixed $\lambda$ and $k$ let us take $\eta \in VS(\lambda, k)$. Suppose $a_j$ boxes were added to the $j$-th column of $\lambda$ to get $\eta$ (in particular, the $a_j$'s add up to $k$) Let $a:=(a_1, \cdots a_j)$. The inclusion $\bbS_{\eta} \into \bbS_{\lambda} \tnsr \ExtAlg^k U$ is the composition (see proof of Theorem IV.2.1 in \cite{Ak-Buch-Wey})

\[
\bbS_{\eta}  \into \ExtAlg^{\tilde{\eta}}U \to \ExtAlg^{\tilde{\lambda}}U \tnsr (\ExtAlg^{a} U) \to \ExtAlg^{\tilde{\lambda}}U \tnsr \ExtAlg^k U \to \bbS_{\lambda} \tnsr \ExtAlg^kU
\]

\begin{example}
\label{ex:Pieri1}
The formula for the Pieri inclusion $\bbS_{(2,2)} \into \Sym_2U\tnsr \Sym_2U$ is:

\begin{eqnarray*}
{\scriptsize\young(ab,cd)} \mapsto \pseudofrac{a}{\wdg}{c} \tnsr \pseudofrac{b}{\wdg}{d} \mapsto (\pseudofrac{a}{\tnsr}{c} - \pseudofrac{c}{\tnsr}{a}) \tnsr (\pseudofrac{b}{\tnsr}{d} - \pseudofrac{d}{\tnsr}{b}) \mapsto \\
\pseudofrac{a}{\tnsr}{c} \tnsr \pseudofrac{b}{\tnsr}{d}-\pseudofrac{a}{\tnsr}{c} \tnsr \pseudofrac{d}{\tnsr}{b}-\pseudofrac{c}{\tnsr}{a} \tnsr \pseudofrac{b}{\tnsr}{d}+\pseudofrac{c}{\tnsr}{a} \tnsr \pseudofrac{d}{\tnsr}{b} \mapsto \\
ab \tnsr cd-ad \tnsr cb-cb \tnsr ad+cd \tnsr ab
\end{eqnarray*}

\end{example}

\begin{example}
\label{ex:Pieri2}
The formula for the Pieri inclusion $\bbS_{(2,1)} \into U \tnsr \Sym_2U$ is:

\begin{eqnarray*}
{\scriptsize\young(ab,c)} \mapsto \pseudofrac{a}{\wdg}{c} \tnsr b \mapsto \pseudofrac{a}{\tnsr}{c} \tnsr b - \pseudofrac{c}{\tnsr}{a} \tnsr b \mapsto c \tnsr ab - a \tnsr cb
\end{eqnarray*}

\end{example}

\subsection{Embeddings into $U \tnsr \bbS_{\lambda} \tnsr \ExtAlg^kU$}
\label{subsec:Emb}

We will need a technical result which describes the embeddings of irreducible representations into $U \tnsr \bbS_{\lambda} \tnsr \ExtAlg^k U$. First let us think about this tensor product with brackets placed as follows: $U \tnsr (\bbS_{\lambda} \tnsr \ExtAlg^k U)$. With this placement of brackets we will think about each irreducible summand $\eta$ as being obtained from the diagram of $\lambda$ by first adding $k$ boxes according to Pieri rule (i.e. first tensor with $\ExtAlg^k U$) and then adding one more box (tensor with $U$). Moreover, we will mark the $k$ boxes that were added first with the symbol "$\ExtAlg$" and the box which was added last will be marked with the symbol "$U$". 

\begin{example}

Suppose the dimension of $U$ is $3$. Consider the tensor product $U \tnsr \bbS_{(2,1,0)} \tnsr \ExtAlg^2 U$. The irreducible summands of $\bbS_{(2,1,0)} \tnsr \ExtAlg^2 U$ are obtained from the diagram of $\lambda$ by adding two boxes according to the Pieri rule. Let us mark these boxes with the symbol '$\ExtAlg$'. Thus the summands of $\bbS_{(2,1,0)} \tnsr \ExtAlg^2 U$ are:

{\scriptsize\young( \hfil \hfil \ExtAlg,\hfil \ExtAlg)}, {\scriptsize\young( \hfil \hfil \ExtAlg,\hfil,\ExtAlg)}, {\scriptsize\young( \hfil \hfil,\hfil \ExtAlg,\ExtAlg)}

Now to get all the summands of $U \tnsr (\bbS_{(2,1,0)} \tnsr \ExtAlg^2 V)$ we add one more box to all the summands above. Let us mark this box with the symbol "$U$". We get:

{\scriptsize\young( \hfil \hfil \ExtAlg U,\hfil \ExtAlg)}, {\scriptsize\young( \hfil \hfil \ExtAlg,\hfil \ExtAlg U)}, {\scriptsize\young( \hfil \hfil \ExtAlg,\hfil \ExtAlg,U)},\\

{\scriptsize\young( \hfil \hfil \ExtAlg U,\hfil,\ExtAlg)}, {\scriptsize\young( \hfil \hfil \ExtAlg,\hfil U,\ExtAlg)}, {\scriptsize\young( \hfil \hfil \ExtAlg,\hfil,\ExtAlg,U)},\\

{\scriptsize\young( \hfil \hfil U,\hfil \ExtAlg,\ExtAlg)}, {\scriptsize\young( \hfil \hfil,\hfil \ExtAlg,\ExtAlg U)}, {\scriptsize\young( \hfil \hfil,\hfil \ExtAlg,\ExtAlg,U)}.
\end{example}

Note that with this bracket placement no "$\ExtAlg$"'s will be to the right or below the "$U$". Each such (labeled) diagram $\eta$ defines an embedding $\bbS_{\eta} \into U \tnsr (\bbS_{\lambda}  \tnsr \ExtAlg^k U)$ in the following way: suppose $\eta$ has $a_j$ $\ExtAlg$'s in the column $j$ and the $U$ is in the column $s$. Let $a:=(a_1, \cdots a_t)$ and $b$  be the vector all of whose components are zero, except $1$ in the position $s$.

\begin{eqnarray*}
\bbS_{\eta} \into \ExtAlg^{\tilde{\eta}} U \xrightarrow{\Phi(\tilde{\eta}, b)} U \tnsr \ExtAlg^{\tilde{\eta}-b}U \xrightarrow{1_U \tnsr \Phi(\tilde{\eta}-b,a)} U \tnsr (\ExtAlg^{\tilde{\eta}-b-a}U \tnsr \ExtAlg^aU) \to U \tnsr (\bbS_{\lambda} \tnsr \ExtAlg^kU)
\end{eqnarray*}

In other words, this labeling of $\eta$ tells us to first separate off the box labeled "$U$" by applying appropriate component of comultiplication on exterior algebra. Then we separate off all the "$\ExtAlg$"'s. Precomposing with the inclusion $\bbS_{\eta} \into \ExtAlg^{\tilde{\eta}} U$ and postcomposing with the projection gives us the desired embedding.

Now let us think about this tensor product with brackets placed in a different way: $(U \tnsr \bbS_{\lambda}) \tnsr \ExtAlg^k U$. With this placement of the brackets we will think about each irreducible summand $\eta'$ as being obtained from the diagram of $\lambda$ by first adding one box (i.e. first tensor with $U$) and then adding $k$ more boxes (tensor with $\ExtAlg^k U$). Again, we will mark the box that was added first with the symbol "$U$" and the $k$ boxes which were added later will be marked with the symbol "$\ExtAlg$". 

\begin{example}
Now we think about the tensor product $U \tnsr \bbS_{(2,1,0)} \tnsr \ExtAlg^2 U$ with brackets placed as $(U \tnsr \bbS_{(2,1,0)}) \tnsr \ExtAlg^2 U$. The summands of $U \tnsr \bbS_{(2,1,0)}$ are:

{\scriptsize\young( \hfil \hfil U,\hfil)}, {\scriptsize\young( \hfil \hfil,\hfil U)}, {\scriptsize\young( \hfil \hfil,\hfil,U)}

Now we add two "$\ExtAlg$"'s according to the Pieri rule:

{\scriptsize\young( \hfil \hfil U\ExtAlg,\hfil \ExtAlg)}, {\scriptsize\young( \hfil \hfil U\ExtAlg,\hfil,\ExtAlg)}, {\scriptsize\young( \hfil \hfil U,\hfil \ExtAlg,\ExtAlg)}, \\

{\scriptsize\young( \hfil \hfil \ExtAlg,\hfil U\ExtAlg)}, {\scriptsize\young( \hfil \hfil \ExtAlg,\hfil U,\ExtAlg)},\\

{\scriptsize\young( \hfil \hfil \ExtAlg,\hfil \ExtAlg,U)}, {\scriptsize\young( \hfil \hfil \ExtAlg,\hfil,U,\ExtAlg)}, {\scriptsize\young( \hfil \hfil,\hfil \ExtAlg,U\ExtAlg)}, {\scriptsize\young( \hfil \hfil,\hfil \ExtAlg,U,\ExtAlg)}

\end{example}

Note that this time "$U$" is "inside" of "$\ExtAlg$"'s, i.e. all the "$\ExtAlg$" occur to the right and below the "$U$". Each such (labeled) diagram $\eta'$ defines an embedding $\bbS_{\eta'} \into (U \tnsr \bbS_{\lambda}) \tnsr \ExtAlg^k U$ in the following way: suppose $\eta$ has $a'_j$ $\ExtAlg$'s in the column $j$ and the $U$ is in the column $s'$. Let $a':=(a'_1, \cdots a'_t)$ and $b'$  be the vector all of whose components are zero, except $1$ in the position $s'$.

\begin{eqnarray*}
\bbS_{\eta'} \into \ExtAlg^{\tilde{\eta'}} U \xrightarrow{\Phi(\tilde{\eta'}, a')} \ExtAlg^{\tilde{\eta'}-a'}U \tnsr \ExtAlg^{a'}U \xrightarrow{\Phi(\tilde{\eta'}-a',b') \tnsr 1_{\ExtAlg^{a'}U}} (U \tnsr \ExtAlg^{\tilde{\eta'}-a'-b'}U)\tnsr \ExtAlg^{a'}U \to U \tnsr (\bbS_{\lambda} \tnsr \ExtAlg^kU)
\end{eqnarray*}

We will need to know "how to move "$U$" from outside to inside". More precisely, suppose we have a labeled diagram $\eta$ which comes from the bracket placement $U \tnsr (\bbS_{\lambda} \tnsr \ExtAlg^k U)$. This labeled diagram defines the embedding $\bbS_{\eta} \into U \tnsr (\bbS_{\lambda} \tnsr \ExtAlg^k U)$. The question is: which labbeled diagram $\eta'$ which comes from bracket placement $(U \tnsr \bbS_{\lambda}) \tnsr \ExtAlg^kU$ defines the same embedding as $\eta$? We will write $\eta \sim \eta'$ when $\eta$ and $\eta'$ define the same embedding into $U \tnsr \bbS_{\lambda} \tnsr \ExtAlg^k U$. 

The skew shape $\eta-\lambda$ has at most one row with two boxes. Suppose $\eta-\lambda$ has a row with two boxes, then multiplicity of such $\eta$ in $U \tnsr \bbS_{\lambda} \tnsr \ExtAlg^k U$ is equal to $1$, so there is only one labeled diagram $\nu$ coming from bracket placement $(U \tnsr \bbS_{\lambda}) \tnsr \ExtAlg^k U$ that has the same shape as $\eta$.

\begin{example}
In the above example we have the following equivalences:

{\scriptsize\young( \hfil \hfil \ExtAlg U,\hfil \ExtAlg)} $\sim$ {\scriptsize\young( \hfil \hfil U\ExtAlg,\hfil \ExtAlg)}, {\scriptsize\young( \hfil \hfil \ExtAlg,\hfil \ExtAlg U)} $\sim$ {\scriptsize\young( \hfil \hfil \ExtAlg,\hfil U\ExtAlg)}, {\scriptsize\young( \hfil \hfil \ExtAlg U,\hfil,\ExtAlg)} $\sim$ {\scriptsize\young( \hfil \hfil U\ExtAlg,\hfil,\ExtAlg)},{\scriptsize\young( \hfil \hfil,\hfil \ExtAlg,\ExtAlg U)} $\sim$ {\scriptsize\young( \hfil \hfil,\hfil \ExtAlg,U\ExtAlg)}. 

\end{example}

Suppose now that the skew-shape $\eta-\lambda$ has no row with two boxes. If there is no column that has both "$\ExtAlg$"'s and "$U$" then this $\eta$ appears in both bracket placements and no moving is necessary.

\begin{example}
In our example we see that the labeled diagrams

{\scriptsize\young( \hfil \hfil \ExtAlg,\hfil \ExtAlg,U)}, {\scriptsize\young( \hfil \hfil \ExtAlg,\hfil U,\ExtAlg)}, {\scriptsize\young( \hfil \hfil \ExtAlg,\hfil \ExtAlg,U)}.

appear in both bracket placements.
\end{example}

Finally, suppose that the skew shape $\eta-\lambda$ has no row with two boxes and some column has both "$\ExtAlg$"'s and "$U$". Then we need to "slide the "$U$" up". This does not change the embedding. One can easily see it by writing down the corresponding embeddings and using coassotiativity of $\ExtAlg U$ (see Figure ~\ref{fig:coass}).

\begin{example}
{\scriptsize\young( \hfil \hfil \ExtAlg,\hfil,\ExtAlg,U)} $\sim$ {\scriptsize\young( \hfil \hfil \ExtAlg,\hfil,U,\ExtAlg)}, {\scriptsize\young( \hfil \hfil,\hfil \ExtAlg,\ExtAlg,U)} $\sim$ {\scriptsize\young( \hfil \hfil,\hfil \ExtAlg,\ExtAlg,U)}.
\end{example}

In a similar way we can prove analogous results for embeddings into $U \tnsr \bbS_{\lambda} \tnsr \Sym_k U$.

\subsection{Representations of $H:=\GL(V) \times \GL(W^*)$}
\label{subsec:RepH}
Every finite-dimensional polynomial (more generally, rational) representation of $H$ decomposes into direct sum of irreducible ones. Every irreducible polynomial representation of $H$ is of the form $\bbS_{\lambda} V \tnsr \bbS_{\eta} W^*$ (see Exercise 2.36 of~\cite{Fult-Harr}). 

We will also need the so-called Cauchy formula for the decomposition of $\Sym (V \tnsr W^*)$ into irreducible representations of $H$ (see Section III.1 of~\cite{Ak-Buch-Wey}):

\begin{equation}
\Sym_d (V \tnsr W^*) \niso \underset{|\lambda|=d}{\Drsum} \bbS_{\lambda} V \tnsr \bbS_{\lambda} W^*
\end{equation}

The formula for the embedding $\bbS_{\lambda} V \tnsr \bbS_{\lambda} W^*$ is as follows. First define a map 

\begin{eqnarray*}
\alpha_k \colon \ExtAlg^k V \tnsr \ExtAlg^k W^* \to \Sym_k (V \tnsr W^*),\\
\alpha_k(v_1 \wdg \cdots v_k \tnsr w^*_1 \wdg \cdots w^*_k):= \det
\begin{bmatrix}
    v_1 \tnsr w^*_1 & \cdots & v_1 \tnsr w^*_k\\

    &  & \vdots &  & \\
   v_k \tnsr w^*_1 & \cdots & v_k \tnsr w^*_k\\
  \end{bmatrix} 
\end{eqnarray*}

Suppose now $\lambda=(\lambda_1, \cdots \lambda_t)$ is any partition of $d$, then the inclusion $\bbS_{\lambda} V \tnsr \bbS_{\lambda} W^* \into \Sym_d (V \tnsr W^*)$ is the following composition:

\begin{eqnarray*}
\bbS_{\lambda}V \tnsr \bbS_{\lambda}W^* \into \ExtAlg^{\lambda} V \tnsr \ExtAlg^{\lambda}W^* = (\ExtAlg^{\lambda_1} V \tnsr \cdots \ExtAlg^{\lambda_t}V) \tnsr (\ExtAlg^{\lambda_1}W^* \tnsr \cdots \ExtAlg^{\lambda_t}W^*) \niso \\
(\ExtAlg^{\lambda_1}V \tnsr \ExtAlg^{\lambda_1}W^*) \tnsr \cdots (\ExtAlg^{\lambda_t}V \tnsr \ExtAlg^{\lambda_t}W^*) \to \Sym_{\lambda_1}(V \tnsr W^*) \tnsr \cdots \Sym_{\lambda_t}(V \tnsr W^*) \to \Sym_d (V \tnsr W^*)
\end{eqnarray*}

where the first map is the tensor product of the inclusions of the $\bbS_{\lambda}$ into $\ExtAlg^{\lambda}$,  the third map is the tensor product of $\alpha_{\lambda_i}$'s and the last map is multiplication on the symmetric algebra.

\subsection{Vector Bundles and Locally Free Sheaves}
There are several choices how to setup the correspondence between vector bundles over a scheme $S$ and locally free sheaves on $S$ (see Exercises II.5.17-II.5.18 of \cite{Har} for details). We will stick to the following convention: Let $\scrptE$ be a locally free sheaf over $S$. Then the corresponding vector bundle $X \xrightarrow{f} S$ (here $f \colon X \to S$ denotes the structure map) to $\scrptE$ is $\underline{\Spec} \Sym \scrptE^*$. Let $\scrptJ(X)$ denote the sheaf of sections of $X$. Then, according to our conventions, $\scrptJ(X)=\scrptJ(\underline{\Spec} \Sym \scrptE^*)\niso\scrptE$ and $f_* \scrptO_X \niso \Sym \scrptF^*$.    By "cohomology of the vector bundle $X$" we will mean the cohomology of its sheaf of sections.

Also, we apply the same convention to projective spaces, so $\bbP(\scrptE):= \Proj \Sym \scrptE^*$.
Note that our conventions are different from \cite{Har} for example, but are the same as in \cite{Wey}.

\begin{example}
Let $\bbP:=\bbP(W^*)=\underline{\Proj} \Sym W$ be the projective space on $W^*$. There is the so-called (exact) tautological sequence of vector bundles on $\bbP$:

\begin{equation}
\label{eq:taut}
0 \to \scrptR \to W^* \times \bbP \to \scrptQ \to 0
\end{equation}

Here $\scrptR$ is the tautological rank $1$ subbundle of the trivial bundle $\scrptR:=\{(w^*,[L]) \in W^* \times \bbP | w^* \in L\}$ and $\scrptQ$ denotes the quotient. Then $\scrptR \niso \underline{\Spec} \Sym (\scrptO(1))$ and $\scrptQ \niso \underline{\Spec} \Sym (\Omega(1))$.

\end{example}

Since applying Schur functor to a vector space is a functorial operation, it extends to vector bundles in a natural way. By a Schur functor $\bbS_{\lambda}$ applied to a locally free sheaf $\scrptE$ we will mean the sheaf associated to $\bbS_{\lambda}(\underline{\Spec} \Sym \scrptE^*)$.

\subsection{Cohomology of Projective Spaces} Again, let $S$ be a scheme, $\scrptE$ a locally free sheaf of rank $r$, $\bbP(\scrptE^*):= \underline{\Proj} \Sym \scrptE$ and let $p: \bbP(\scrptE^*) \to S$ the structure map. Then:

\[
\R^q p_*\scrptO_{\bbP(\scrptE^*)}(d) =
\begin{cases}
\Sym_d(\scrptE) \text{ if } q=0 \\
0 \text{ if } q \neq 0,r-1 \\
\scrptH om_{\scrptO_S}(\Sym_{-d-r} \scrptE \tnsr \ExtAlg^r \scrptE, \scrptO_S) \text{ if } q=r-1 
\end{cases}
\]

We will also use the following formulation of Serre Duality on projetive space: there is an isomorphism \emph{of functors} 

\[
(\R^ip_*(\_))^* \niso \R^{r-1-i}((\_)^* \tnsr p^* \ExtAlg^r \scrptE (-r))
\]

\subsection{Borel-Bott-Weil Theorem for Projective Spaces}

The sheaves $\bbS_{\lambda} (\scrptT(-1))(i)$ and $\bbS_{\lambda} (\Omega(1))(i)$ correspond to homogeneous vector bundles on projective space and their cohomology is known and can be derived as a corollary of Borel-Bott-Weil theorem. Let us recall this result (which is Corollary 4.1.9 from \cite{Wey} in case of projective space, slightly rephrased). For $\sigma$ any permutation of $g$ letters let $l(\sigma)$ denote its length - the minimal number of transpositions (of two adjacent letters) necessary to express $\sigma$. The action of $\sigma$ on any sequence $(n_1, \cdots n_g) \in \bbZ^g$ is defined as $\sigma (n_1, \cdots n_g):=(n_{\sigma(1)}, \cdots n_{\sigma(g)})$.

\begin{theorem}
\label{thm:BBW}

Let $\bbP:= \underline{\Proj} \Sym W$ and $p \colon \bbP \to \Spec \Bbbk$ be the structure map. Let $i \in \bbZ$, $\lambda=(\lambda_1, \cdots , \lambda_{g-1}), \lambda_1 \geq \cdots \geq \lambda_{g-1}$ be an non-incresing sequence of integers (negative values are allowed). Consider the vector bundle $\scrptV(i, \lambda):= (\scrptR^*)^{\tnsr i} \tnsr \bbS_{\lambda} \scrptQ^*$ (Recall that sheaf of sections of $\scrptR^*$ is $\scrptO(1)$ and sheaf of sections of $\scrptQ^*$ is $\Omega(1)$). Then $\R^qp_* \scrptV(i, \lambda)$ is non-zero for at most one $q$ and

\begin{enumerate}

\item If the sequence $(i+g-1, \lambda_1+g-2, \cdots \lambda_{g-1})$ has a repetition, then $\scrptV(i, \lambda)$ has no cohomology, i.e. $\R^qp_* \scrptV(i, \lambda) = \R^qp_* \bbS_{\lambda}(\Omega(1))(i)=0$ for all $q$. 

\item If the sequence $(i+g-1, \lambda_1+g-2, \cdots \lambda_{g-1})$ has no repetitions, then there exists a unique permutation $\sigma$ such that the the sequence $\delta (\lambda):=\sigma(i+g-1, \lambda_1+g-2, \cdots \lambda_{g-1})-(g-1, g-2, \cdots, 0)$ is non-increasing. In this case $\scrptV(i, \lambda)$ has only one cohomology which is

\[
\R^{l(\sigma)}p_* \scrptV(i, \lambda) = \R^{l(\sigma)}p_* \bbS_{\lambda}(\Omega(1))(i)=\bbS_{\delta(\lambda)} W
\]
\end{enumerate}

\end{theorem}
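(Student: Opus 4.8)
The plan is to recognize $\scrptV(i,\lambda)$ as a homogeneous vector bundle on $\bbP$, now regarded as the partial flag variety $\GL(W)/P$, and then to read the statement off the Borel--Bott--Weil theorem for $\GL(W)/P$, of which it is the combinatorial shadow under the $\rho$-shifted action of the Weyl group.

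First I would identify $\bbP = \bbP(W^*) = \underline{\Proj}\Sym W$ with $\GL(W)/P$, where $P$ is the maximal parabolic fixing the base point of $\bbP$; its Levi quotient is $\GL_1 \times \GL(W')$ with $\dim W' = g-1$. Tracing through the conventions of this section, the tautological sequence $0 \to \scrptR \to W^* \times \bbP \to \scrptQ \to 0$ is the sequence of homogeneous bundles attached to the two Levi factors: $\scrptR^*$ (sheaf of sections $\scrptO(1)$) is the line bundle of a character of the $\GL_1$-factor, and $\scrptQ^*$ (sheaf of sections $\Omega(1)$) is the bundle attached to the standard representation of the $\GL(W')$-factor, up to a further twist. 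Since $\lambda = (\lambda_1 \geq \cdots \geq \lambda_{g-1})$ is non-increasing, hence dominant for $\GL(W')$ (negative entries being allowed via the rational-representation formalism of Subsection~\ref{subsec:Rep}), the bundle $\bbS_\lambda\scrptQ^*$ is the irreducible homogeneous bundle of highest weight $(\lambda_1,\ldots,\lambda_{g-1})$ for that factor; hence $\scrptV(i,\lambda) = (\scrptR^*)^{\tnsr i}\tnsr\bbS_\lambda\scrptQ^*$ is the homogeneous bundle associated with the irreducible $P$-representation of highest weight $\mu := (i,\lambda_1,\ldots,\lambda_{g-1})$.

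Next I would invoke Bott's theorem for $\GL(W)/P$. With $\rho := (g-1,g-2,\ldots,1,0)$ and $S_g$ acting on $\bbZ^g$ by permuting coordinates: if $\mu+\rho$ has a repeated entry, then $\R^qp_*$ of the associated bundle vanishes for every $q$; otherwise there is a unique $\sigma \in S_g$ making $\sigma(\mu+\rho)$ strictly decreasing, the cohomology is concentrated in degree $l(\sigma)$, and there it is the irreducible $\GL(W)$-representation of highest weight $\sigma(\mu+\rho)-\rho$, i.e. $\bbS_{\sigma(\mu+\rho)-\rho}W$. Specializing $\mu = (i,\lambda_1,\ldots,\lambda_{g-1})$ yields $\mu+\rho = (i+g-1,\lambda_1+g-2,\ldots,\lambda_{g-1})$ and $\sigma(\mu+\rho)-\rho = \delta(\lambda)$, and the two cases fall out verbatim (since the entries of $\mu+\rho$ are distinct, "$\sigma(\mu+\rho)$ strictly decreasing" is equivalent to "$\delta(\lambda)$ non-increasing"). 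If one would rather not cite the parabolic form, one pulls back along $\pi\colon \GL(W)/B \to \GL(W)/P$: as $\lambda$ is dominant, $\scrptV(i,\lambda)$ is $\pi_*$ of the line bundle $\mathcal{L}_\mu$ with $\R^{>0}\pi_*\mathcal{L}_\mu = 0$, so by Leray the cohomology agrees with that of $\mathcal{L}_\mu$ on the full flag variety, and Bott on $\GL(W)/B$ is proved by the usual induction on $l(\sigma)$ through the $\bbP^1$-fibrations, using the exchange condition in $S_g$ and the cohomology of $\scrptO(m)$ on $\bbP^1$.

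The step I expect to be the real work, as opposed to quotation, is the first one: matching this paper's conventions (the normalization $\bbP(\scrptE) = \underline{\Proj}\Sym\scrptE^*$; that $\scrptR^*$ has sections $\scrptO(1)$ and $\scrptQ^*$ has sections $\Omega(1)$; the identity $\bbS_{(\lambda_1,\ldots,\lambda_n)}(U^*)\niso\bbS_{(-\lambda_n,\ldots,-\lambda_1)}U$) against the weight-lattice bookkeeping, so that the $\rho$-shift comes out as exactly $(g-1,g-2,\ldots,0)$ and the $\GL_1$-weight as exactly $i$, rather than as a reflected or dualized variant. Once that dictionary is fixed, the rest is the standard Borel--Bott--Weil mechanism. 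A fully elementary alternative would be to induct on $|\lambda|$ using the dual Euler sequence $0\to\scrptQ^*\to W\tnsr\scrptO\to\scrptR^*\to 0$ together with the Schur/Pieri filtrations of Subsection~\ref{subsec:Pieri} and Serre's computation of the cohomology of $\scrptO(d)$ on $\bbP$, but this route is combinatorially heavy precisely because it re-derives the $\rho$-shift by hand.
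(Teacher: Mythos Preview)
The paper does not prove this theorem: it is stated as a recalled result, explicitly attributed to Corollary~4.1.9 of \cite{Wey} (``which is Corollary 4.1.9 from \cite{Wey} in case of projective space, slightly rephrased''), and no argument is given. So there is no ``paper's own proof'' to compare your proposal against.

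Your sketch is the standard derivation and is correct in outline: identify $\bbP(W^*)$ with $\GL(W)/P$ for the appropriate maximal parabolic, recognize $\scrptV(i,\lambda)$ as the homogeneous bundle attached to the irreducible $P$-representation of highest weight $(i,\lambda_1,\ldots,\lambda_{g-1})$, and read off the cohomology from Bott's theorem via the $\rho$-shifted Weyl action. Your emphasis that the genuine content lies in matching the paper's normalization conventions (so that $\mu+\rho$ comes out as exactly $(i+g-1,\lambda_1+g-2,\ldots,\lambda_{g-1})$) is apt, and your fallback via pullback to $\GL(W)/B$ is the usual way to reduce to the line-bundle case. In short, you have supplied a proof where the paper simply cites one; your argument is essentially what one would find in the cited reference.
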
 

\begin{example}
For a warm-up, let us calculate cohomology of the sheaves $\scrptO(d)$ on projective space $\bbP = \Proj \Sym W$. In the theorem above we set $\lambda=(0, \cdots 0)$. Consider the sequence $(d+g-1, g-2, \cdots 0)$. This sequence has a repetition if and only if $-g+1 \leq d \leq -1$ in which case the sheaf $\scrptO(d)$ has no cohomology. So assume $d$ is not in this range. The sequence $\delta (\lambda):=\sigma(d+g-1, g-2, \cdots 0)-(g-1, g-2, \cdots, 0)$ is non-increasing if and only if the sequence $\sigma(d+g-1, g-2, \cdots 0)$ is \emph{strictly} decreasing. To achieve that we only possibly need to move $d+g-1$ across smaller numbers. 

If $d \geq 0$ then no moving is necessary. In this case $\scrptO(d)$ only has $0$-th cohomology which is $\bbS_{\delta} W = \bbS_{(d, 0, \cdots 0)} W = \Sym_d W$. 

If $d<-g+1$, then the number $d+g-1$ is negative and thus we need to move it to the very right of the sequence to make it strictly decreasing. The length of this permutation is $g-1$ and so $\scrptO(d)$ only has $(g-1)$-st cohomology which is $\bbS_{\delta(\lambda)} W = \bbS_{(-1, \cdots -1, d+g-1)} W \niso \bbS_{(0, \cdots 0, d+g)} W \tnsr \bbS_{(-1, \cdots -1)} W \niso \Sym_{-d-g} W^* \tnsr \ExtAlg^g W^*$. 
\end{example}

\begin{example}
\label{ex:coh}
Fix $i<0$ and $q \geq 0$. In this example we calculate for which partitions $\lambda=(\lambda_1, \cdots \lambda_{g-1}), \lambda_1 \geq \lambda_2 \geq \cdots \lambda_{g-1} \geq 0$ the cohomology $\R^q p_*\bbS_{\lambda} (\scrptT(-1))(i)$ is non-zero. This will be used later in Section~\ref{sec:StrRep}.

The sheaf $\bbS_{\lambda} (\scrptT(-1))(i)$ corresponds to the vector bundle $\scrptV(i, \lambda)=\scrptR^{\tnsr (-i)} \tnsr \bbS_{\lambda} \scrptQ \niso (\scrptR^*)^{\tnsr i} \tnsr \bbS_{(-\lambda_{g-1}, -\lambda_{g-2}, \cdots -\lambda_1)} \scrptQ^*$. We need to find all partitions $\lambda$ such that the sequence $\sigma(i+g-1, -\lambda_{g-1}+g-2, \cdots -\lambda_1)$ is strictly decreasing where $\sigma$ is the (unique) permuation of length $q$. The subsequence $(-\lambda_{g-1}+g-2, \cdots -\lambda_1)$ is strictily decreasing already, so the partition $\lambda$ should be such that the number $i+g-1$ has to be moved to the right across exactly $q$ numbers:  $-\lambda_{g-1}+g-2, \cdots -\lambda_{g-q}+g-q-1$. This happens if and only if the inequalities $-\lambda_{g-q-1}+g-q-2 <i+g-1 < -\lambda_{g-q}+g-q-1$ hold. This is equivalent to 

\begin{equation}
\label{eq:lambda}
\lambda_{-q+g}<-i-q \leq \lambda_{-q+g-1}
\end{equation}

If $\lambda$ is a partition satisfying inequalities~\ref{eq:lambda} then, 

\begin{eqnarray*}
\R^qp_* \scrptV(i, \lambda)=\R^qp_* \scrptR^{\tnsr (-i)} \tnsr \bbS_{\lambda} \scrptQ \niso \bbS_{\delta(\lambda)} W = \bbS_{(-\lambda_{g-1}-1, \cdots \lambda_{g-q}-1, i+q, -\lambda_{g-q-1}, \cdots -\lambda_1)}W \niso \\
\bbS_{(\lambda_1, \cdots \lambda_{g-q-1}, -i-q, \lambda_{g-q}+1, \cdots \lambda_{g-1}+1)}W^* \niso \bbS_{(\lambda_1-1, \cdots \lambda_{g-q-1}-1, -i-q-1, \lambda_{g-q}, \cdots \lambda_{g-1})}W^* \tnsr \ExtAlg^g W^*
\end{eqnarray*}

Note that if $\lambda$ is a partition satisfying inequalities~\ref{eq:lambda}, then $\delta(\lambda)$ can be described as follows: remove one box from each of the first $-q+g-1$ rows of $\lambda$, add one box to the bottom of each of the first $-i-q-1$ columns and then add a column of $g$ boxes.
\end{example}

\subsection{Spectral Sequences}
 
Let $C^{\bullet}$ be any complex with filtration:

\[
F \colon \cdots \to F_2 C^{\bullet} \to F_1 C^{\bullet} \to F_0 C^{\bullet}:=C^{\bullet}
\]

This filtration induces a filtration on cohomology of $C^{\bullet}$ in the obvious way by $H^{\bullet}F_k:= \im H^{\bullet}(F_k C^{\bullet} \into C^{\bullet})$. Suppose now that we have a double complex $D^{\bullet, \bullet}$ with differentials $d_{vert} \colon D^{i,j} \to D^{i, j+1}$ and $d_{hor} \colon D^{i,j} \to D^{i+1, j}$. In our applications the double complex will be concentrated in the second quadrant in a strip of finite width, meaning that $D^{p, \bullet}=0$ for $p>0$ and for $p<<0$, and $D^{\bullet, q}=0$ for $q<0$. 

Let $C^{\bullet}:= \tot D^{\bullet, \bullet}$ be the total complex of $D^{\bullet, \bullet}$. There are two natural filtrations on $C^{\bullet}$: one is $F^{col}$ which is induced from filtering $D^{\bullet, \bullet}$ by columns, and the other, $F^{row}$, is induced from filtering by rows.  

\begin{theorem}

With the notation above, the filtration $F^{col}$ of $C^{\bullet}$ gives rise to the spectral sequence $^{col}E_{\bullet}^{\bullet, \bullet}$ with

\[
^{col}E_1^{p,q} \niso H^q(D^{p, \bullet}) 
\]

and

\[
^{col}E_2^{p,q} \niso H^p(\cdots \to H^q(D^{i-1, \bullet}) \to H^q(D^{i, \bullet}) \to H^q(D^{i+1, \bullet}) \to \cdots)
\]

The differntial $d_r$ on the $r-$th page 

\[
d_r \colon ^{col}E_r^{p,q} \to E_r^{p+r, q-r+1}
\]

Similarly, the filtration $F^{row}$ of $C^{\bullet}$ gives rise to the spectral sequence $^{row}E_{\bullet}^{\bullet, \bullet}$ with

\[
^{row}E_1^{p,q} \niso H^p(D^{\bullet, q}) 
\]

and

\[
^{row}E_2^{p,q} \niso H^q(\cdots \to H^p(D^{\bullet, i-1}) \to H^p(D^{\bullet, i}) \to H^p(D^{\bullet, i+1}) \to \cdots)
\]

The differntial $d_r$ on the $r-$th page 

\[
d_r \colon ^{row}E_r^{p,q} \to E_r^{p-r+1, q+r}
\]

Both spectral sequences converge, meaning that $\underset{p+q=n}{\Drsum} ^{col}E_{\infty}^{p,q}$ is the associated graded of $H^nC^{\bullet}$ with respect to the filtration induced by $F^{col}$ on $H^nC^{\bullet}$ and similarly for the spectral sequence $^{row}E_{\bullet}^{\bullet, \bullet}$.
\end{theorem}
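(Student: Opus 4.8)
The statement is the standard construction of the two spectral sequences attached to a filtered cochain complex, applied to $C^\bullet := \tot D^{\bullet,\bullet}$ with its column and row filtrations; so the plan is to recall that construction and then check that the boundedness hypothesis on $D^{\bullet,\bullet}$ upgrades the formal conclusions to honest convergence. First I would recall the exact-couple construction for an arbitrary cochain complex $C^\bullet$ carrying a decreasing filtration. For each $k$ the short exact sequence of complexes $0 \to F_{k+1} C^\bullet \to F_k C^\bullet \to F_k C^\bullet / F_{k+1} C^\bullet \to 0$ yields a long exact cohomology sequence; assembling these over all $k$ produces an exact couple whose first page is $\bigoplus_k H^\bullet(F_k C^\bullet / F_{k+1} C^\bullet)$, and forming the $r$-fold derived couple produces $E_r$ together with a differential $d_r$ whose bidegree is read off from how the connecting homomorphisms shift the filtration degree. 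With the bookkeeping convention that $E_r^{p,q}$ denotes the summand of total degree $p+q$ and filtration degree $p$, this gives $E_1^{p,q} \niso H^{p+q}(F_p C^\bullet / F_{p+1} C^\bullet)$, and $E_2^{p,q}$ is the cohomology at the $p$-th spot of the complex of $E_1$-terms. All of this is purely formal.

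Next I would specialize. For the column filtration I take $F^{col}_p C^\bullet$ to be the total complex of the subcomplex spanned by the columns $D^{p',\bullet}$ with $p' \leq p$ (reindexed over $\bbZ$, which is harmless since the filtration is eventually constant at both ends because the strip is finite). Then the associated graded in filtration degree $p$ is the single column $D^{p,\bullet}$ with its differential $d_{vert}$, so its cohomology in total degree $p+q$ is $H^q(D^{p,\bullet})$, giving ${}^{col}E_1^{p,q} \niso H^q(D^{p,\bullet})$; the induced differential $d_1$ is precisely the map on vertical cohomology induced by $d_{hor}$, whence ${}^{col}E_2^{p,q}$ is the cohomology at spot $p$ of $\cdots \to H^q(D^{p-1,\bullet}) \to H^q(D^{p,\bullet}) \to H^q(D^{p+1,\bullet}) \to \cdots$, and on page $r$ the connecting maps shift $(p,q)$ by $(r,-r+1)$, as claimed. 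The row filtration is handled by the mirror-image argument, filtering by rows in place of columns, and yields the analogous identifications with $d_r$ of bidegree $(-r+1,r)$.

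Finally, for convergence I would invoke the hypothesis that $D^{\bullet,\bullet}$ is concentrated in a finite-width vertical strip of the second quadrant: $D^{p,\bullet}=0$ for $p>0$ and for $p\ll 0$, and $D^{\bullet,q}=0$ for $q<0$. Consequently, in each total degree $n$ only finitely many $D^{p,q}$ with $p+q=n$ are nonzero, so both filtrations of $C^n$ are finite. The two spectral sequences are therefore bounded: each $E_r^{p,q}$ stabilizes as soon as $r$ exceeds the width of the strip (the incoming and outgoing $d_r$ then have source or target zero), so $E_\infty$ exists, and the standard conclusion for bounded filtered complexes identifies $\bigoplus_{p+q=n} E_\infty^{p,q}$ with the associated graded of $H^n C^\bullet$ for the induced filtration, for both $F^{col}$ and $F^{row}$. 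The result is textbook homological algebra, so there is no genuine mathematical obstacle; the only points demanding care are matching the indexing and sign conventions so that the two bidegrees come out exactly as written, and checking that the second-quadrant-strip hypothesis really does force the filtrations to be finite in each total degree, which is what promotes the formal statements to the honest convergence asserted in the theorem's last sentence.
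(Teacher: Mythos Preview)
Your proposal is correct and follows the standard textbook argument via exact couples; there is nothing to fault mathematically. Note, however, that the paper does not actually give a proof of this theorem: it is stated in the technical-results section as a known fact and is immediately followed by the next section, so there is no ``paper's own proof'' to compare against. Your write-up supplies exactly the standard argument the paper is implicitly invoking.
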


\section{The Geometric Setup}

\subsection{A General Construction}
\label{subsec:GenConst}
 In this subsection let $\phi \colon F \to G$ be \emph{any} linear map. We can perform the following construction. Let $\bbP := \bbP_R G^* = \Proj S$ and $p \colon \bbP \to \Spec R$ be the structure map. Then $G \niso p_* \scrptO_{\bbP}(1)$. Apply the chain of isomorphisms 

\[
\Hom_R(F, p_* \scrptO_{\bbP}(1)) \niso \Hom_{\bbP}(p^*F, \scrptO_{\bbP}(1)) \niso \Hom_{\bbP}(p^*F(-1), \scrptO_{\bbP})
\]

to $\phi$ to get $\phi'$. Locally on $U_i:= \Spec \scrptO_{U_i} := \Spec R[ \frac{x_1}{x_i}, \cdots \widehat{\frac{x_i}{x_i}} \cdots ,\frac{x_n}{x_i} ] \hookrightarrow \bbP$, $\scrptO_{\bbP}(-1)$ is generated by $\{\frac{1}{x_i}\}$ and $\phi'$ does the following: $m\tnsr t\{\frac{1}{x_i}\} \mapsto \frac{\phi(m)}{x_i}t$ for $m \in F, t \in \scrptO_{U_i}$. Let $Z$ denote the zero locus of $\phi'$.
The defining homogeneous ideal of $Z$ is generated by the linear forms $\phi(e_1), \cdots ,\phi(e_f)$.

Now consider the Koszul complex (of sheaves) constructed from $\phi'$:

\begin{figure}[h!]

\begin{tikzpicture}
\matrix(m) [matrix of math nodes, 
row sep=0.5em, column sep=0.5em, 
text height=1.5ex, text depth=0.25ex]
{\scrptK^{\bullet}(\phi) \colon 0 & \underset{-f}{(\ExtAlg^f p^*F)(-f)} & (\ExtAlg^{f-1} p^*F)(-f+1) & \cdots & (\ExtAlg^2 p^*F)(-2) & p^*F(-1) & \underset{0}{\scrptO_{\bbP}} & 0\\};

\path[->,font=\scriptsize] 
(m-1-1) edge (m-1-2)
(m-1-2) edge (m-1-3) 
(m-1-3) edge (m-1-4) 
(m-1-4) edge (m-1-5)
(m-1-5) edge (m-1-6)
(m-1-6) edge (m-1-7)
(m-1-7) edge (m-1-8);
\end{tikzpicture}
\end{figure}

\subsection{The Generic Case} Let us apply the construction of the previous subsection when $\phi$ is the generic map, i.e. the matrix of  $\phi$ is $\phi_{ij}=t_{ij}$. Let $\bbP(W^*):=\Proj \Sym W$, $\bbP:=\bbP_R(G^*):= \Proj S \niso \bbP(W^*) \times \Spec R$ - the total space of trivial vector bundle (of dimension $fg$) over $\bbP(W^*)$. We think of the fiber over a point $[\lambda] \in \bbP(W^*)$ as $\Hom(V,W)$. Recall that $Z$ denotes the intersection of zero locii of $R-$linear forms $\phi(e_1), \cdots \phi(e_f)$ on $\bbP$, i.e. 

\[
Z:= \Proj_R S/<\phi(e_1), \cdots \phi(e_f)>.
\]

This setup is summarized in the diagram:
 
\begin{figure}[h!]
\label{fig:GeomDiag}
\begin{tikzpicture}
\matrix(m) [matrix of math nodes, 
row sep=4.0em, column sep=4.0em, 
text height=1.5ex, text depth=0.25ex]
{Z & \Spec R/ I_g\\
\bbP & \Spec R \\
\bbP(W^*) & \Spec \Bbbk\\};

\path[->,font=\scriptsize] 
(m-1-1) edge(m-1-2)
(m-2-1) edge node[above]{$p$}(m-2-2)
(m-3-1) edge node[above]{$p'$}(m-3-2)
(m-1-1) edge node[right]{$\iota$}(m-2-1)
(m-1-2) edge(m-2-2)
(m-2-1) edge node[right]{$\pi$}(m-3-1)
(m-2-2) edge node[right]{$\pi'$}(m-3-2);
\end{tikzpicture}
\caption{}
\label{fig:setup}
\end{figure}
\FloatBarrier

We now show that $Z$ is a (non-trivial) subbundle of the trivial vector bundle and the fiber over $[\lambda] \in \bbP(W^*)$ is $\{a \in \Hom(V, W) | \lambda \in \ker a^*\}$, i.e the vector space of linear maps from $V$ to $W$ whose image is contained in $\ker \lambda$. 

Consider the set $\tilde{Z}:=\{(a, \lambda) \in \Hom(V,W) \times W^*\ | \lambda \in \ker a^*\}$. We will show that this an affine scheme and find its equations.  For that let us look at the evaluation map $ev \colon \Hom(V,W) \times W^* \to V^*$ given by $ev(a, \lambda):= a^*(\lambda)$. The equations for $\tilde{Z}$ are obtained by calculating the fiber of $ev$ above $0 \in V^*$.

After the identification $\Hom(V,W) \niso  V^* \tnsr W$ the action of $ev$ on basis elements is:

\[
ev(e_j^* \tnsr x_i, x_k^*)=(e_l \mapsto x_k^*(e_j^*(e_l)x_i)=(e_l \mapsto \delta_{ki} \delta_{jl})	
\]

The ring map corresponding to $ev$ is:

\[
\Sym V \to R \tnsr \Sym W \niso S \colon e_l \mapsto \sum_{i,j,k} \delta_{ki} \delta_{jl} e_j \tnsr x_i^* \tnsr x_k = \sum_i e_l \tnsr x_i^* \tnsr x_i	
\]

and the image of $e_j$ in $S$ is $\phi(e_j)$. Equations of the fiber are calculated by the tensor product:

\[
S \underset{\Sym V}{\tnsr} (\Sym V / <e_1, \cdots e_f>) \niso S/<\phi(e_1), \cdots \phi(e_f)> = S/<\im \phi>
\]

Now let us verify that the composition $Z \xrightarrow{\iota} \bbP \niso \bbP(W^*) \times \Spec R \xrightarrow{\pi} \bbP(W^*)$ of inclusion $\iota$ and projection $\pi$ is a vector bundle. Locally on the standard affine open set $U_k:= \Spec \scrptO_{U_k} := \Spec \Bbbk [ \frac{x_1}{x_k}, \cdots \widehat{\frac{x_k}{x_k}} \cdots ,\frac{x_n}{x_k} ] \into \bbP$ our map is obtained as follows. Look at the composition

\[
\Bbbk[x_1, \cdots x_g] \to R[x_1, \cdots x_g] \to R[x_1, \cdots x_g]/< \sum_i t_{i1} x_i, \cdots \sum_i t_{if} x_i> 
\]

then localize at $x_k$ and take the zero-th graded component with respect to grading by $x_i$'s to get

\[
\Bbbk[\frac{x_1}{x_k}, \cdots \widehat{\frac{x_k}{x_k}} \cdots ,\frac{x_g}{x_k}] \to 
R[\frac{x_1}{x_k}, \cdots \widehat{\frac{x_k}{x_k}} \cdots ,\frac{x_g}{x_k}] \to 
R[\frac{x_1}{x_k}, \cdots \widehat{\frac{x_k}{x_k}} \cdots ,\frac{x_g}{x_k}]/< (t_{k1} + \sum_{i \neq k} t_{i1}\frac{x_i}{x_k}), \cdots (t_{kf} + \sum_{i \neq k} t_{if}\frac{x_k}{x_i})>
\]

But now each of $f$ relations removes the variable $t_{kj}$, so the quotient on the right is just the polynomial ring $\Bbbk [\{t_{ij}\}_{i \neq k}][\frac{x_1}{x_k}, \cdots \widehat{\frac{x_k}{x_k}} \cdots ,\frac{x_n}{x_k}]$ in $fg-f+(g-1)=(f+1)(g-1)$ variables. Thus indeed, locally our map is just a projection and each fiber is a vector space of dimension $f(g-1)$ as it should be.

The affine morphism $\pi \iota \colon Z \to \bbP(W^*)$ defines quasi-coherent (locally free) sheaf of algebras on $\bbP(W^*)$, namely $(\pi \iota)_* \scrptO_Z$, so $Z \niso \underline{\Spec} (\pi \iota)_* \scrptO_Z$. We now express this in more familiar terms. We will identify the graded $\Sym W$-module which corresponds to the sheaf $(\pi\iota)_* \scrptO_Z$.

Recall that we have the following sequence of graded $R$- modules:

\[
S \tnsr F \xrightarrow{\phi'} S \to S/< \im \phi> \to 0
\]

The graded $\Sym W$-module corresponding to the sheaf $(\pi \iota)_* \scrptO_Z$ is exactly $S/< \im \phi>$ regarded as $\Sym W$-module, but recall that as $\Sym W$-modules we have:

\[
S=\Sym G =\Sym W \underset{\Bbbk}{\tnsr} \Sym (V \tnsr W^*) \niso \Sym_{\Sym W} (\Sym W \tnsr (V \tnsr W^*)) 
\]

And  $\coker \phi'$ is just the algebra $\Sym G$ modulo the ideal generated by some linear forms, so as $\Sym W-$modules (see Proposition A2.2(d) of \cite{Eis}):

\begin{equation}
\label{eq:coker}
S/< \im \phi> \niso \Sym_{\Sym W} (\coker ( \Sym W \tnsr V \to \Sym W \tnsr (V \tnsr W^*)))
\end{equation}

On the other hand, take the Euler sequence on $\bbP(W^*)$:

\[
0 \to \Omega(1) \to \Sym W \tnsr W \to \Sym W \to 0 
\]

Now dualize this sequence (apply $\Hom_{\Sym W}(\_, \Sym W)$), tensor (over $\Sym W$) with $\Sym W \tnsr V$ and observe that $\scrptT(-1) \tnsr V = \coker ( \Sym W \tnsr V \to \Sym W \tnsr (V \tnsr W^*))$ is the cokernel of the same map as in~(\ref{eq:coker}), thus 

\begin{equation}
(\pi \iota)_* \scrptO_Z \niso \Sym (\scrptT(-1) \tnsr p'^* V)
\end{equation}

Moreover, we have the following identification of vector bundles:

\[
Z \niso \scrptQ^* \tnsr V^*
\]

which can be easilly obtained from dualizing the tautological sequence \ref{eq:taut} and tensoring with $V^*$.

\begin{theorem}
When $\phi$ is the generic map, the Koszul complex of sheaves $\scrptK^{\bullet}(\phi)$ is exact and resolves $\scrptO_Z$
\end{theorem}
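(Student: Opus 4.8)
The plan is to reduce the statement to the standard homological fact that the Koszul complex built from a \emph{regular} section of a vector bundle resolves the structure sheaf of that section's zero scheme, and then to verify regularity chart by chart, using the local computation already carried out just above the theorem.

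\textbf{Step 1 (homological input).} Recall that if $X$ is a scheme, $\scrptE$ a locally free sheaf of rank $r$ on $X$, and $s \in \Hom_X(\scrptE, \scrptO_X)$ a bundle map with zero scheme $Z = Z(s)$, then the Koszul complex $\cdots \to \ExtAlg^2 \scrptE \to \scrptE \xrightarrow{s} \scrptO_X \to 0$, with differential given by contraction against $s$, always has cokernel $\scrptO_Z$ in degree $0$, and is exact in every negative degree as soon as $X$ is Cohen--Macaulay and $Z$ has codimension $r$ in $X$. This is a local statement: over an affine open $U = \Spec A$ trivializing $\scrptE$, the section $s$ is given by $r$ elements $a_1, \dots, a_r \in A$, the restricted complex is the ordinary Koszul complex $K_\bullet(a_1,\dots,a_r;A)$, and $Z \cap U = \Spec A/(a_1,\dots,a_r)$; when $A$ is Cohen--Macaulay and $\height(a_1,\dots,a_r) = r$ the $a_i$ form a regular sequence, so $K_\bullet(a_1,\dots,a_r;A)$ is acyclic in positive homological degrees. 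Sheafifying and gluing then give the assertion.

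\textbf{Step 2 (applying it to our situation).} Here $X = \bbP \niso \bbP(W^*) \times \Spec R$, which is a product of smooth varieties and hence Cohen--Macaulay, and the relevant bundle is $\scrptE := p^*F(-1)$, locally free of rank $f$, with $\ExtAlg^j \scrptE = (\ExtAlg^j p^*F)(-j)$; the section is $s = \phi'$ and its zero scheme is $Z$ as in Subsection~\ref{subsec:GenConst}. On the standard affine chart $U_k \hookrightarrow \bbP$ we saw above that $\scrptO_\bbP(-1)$ is free on $\{1/x_k\}$, hence $\scrptE|_{U_k}$ is free on the basis $\{e_j \tnsr \tfrac{1}{x_k}\}_{1\le j\le f}$, and $\phi'$ sends $e_j \tnsr \tfrac{1}{x_k}$ to $\phi(e_j)/x_k = t_{kj} + \sum_{i\neq k} t_{ij}\tfrac{x_i}{x_k} \in \scrptO_{U_k}$, the polynomial ring in the variables $\{t_{ij}\}$ and $\{\tfrac{x_i}{x_k}\}_{i\neq k}$. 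Thus $\scrptK^{\bullet}(\phi)|_{U_k}$ is literally the algebraic Koszul complex on the $f$ elements $\phi(e_1)/x_k, \dots, \phi(e_f)/x_k$ of $\scrptO_{U_k}$. The computation already performed shows that the quotient of $\scrptO_{U_k}$ by these $f$ elements is a polynomial ring in $(f+1)(g-1) = \dim \scrptO_{U_k} - f$ variables; since $\scrptO_{U_k}$ is a polynomial ring (hence regular, hence Cohen--Macaulay), the $f$ elements therefore form part of a system of parameters, so they are a regular sequence and $\height(\phi(e_1)/x_k,\dots,\phi(e_f)/x_k) = f$.

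\textbf{Step 3 (conclusion).} Applying Step 1 on each $U_k$, the complex $\scrptK^{\bullet}(\phi)|_{U_k}$ is a resolution of $\scrptO_{U_k}/(\phi(e_1)/x_k,\dots,\phi(e_f)/x_k) = \scrptO_{Z\cap U_k}$; since the $U_k$ cover $\bbP$ and the ideal of $Z$ is generated by the $\phi(e_i)$, it follows that $\scrptK^{\bullet}(\phi)$ is exact in all negative degrees and that $\coker\bigl(\phi'\colon p^*F(-1) \to \scrptO_\bbP\bigr) = \scrptO_Z$, i.e. $\scrptK^{\bullet}(\phi)$ resolves $\scrptO_Z$. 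The only point needing a little care — the nearest thing to an obstacle, though it is dispatched by the preparatory computation — is matching the sheaf-theoretic Koszul differential (contraction against $\phi'$) with the ordinary Koszul differential on the sequence $(\phi(e_j)/x_k)_j$ after trivializing $\scrptO_\bbP(-1)$ on $U_k$; once this identification is in place, the depth-sensitivity of the Koszul complex does all of the remaining work.
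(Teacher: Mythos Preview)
Your proof is correct and follows essentially the same strategy as the paper's: both reduce to checking that the $f$ elements $\phi(e_1),\dots,\phi(e_f)$ form a regular sequence locally on $\bbP$. The only difference is in how regularity is verified on each chart: the paper gives a direct elimination argument (each $\phi(e_j)$ allows one to solve for a distinct variable $t_{kj}$, so the quotients are successively smaller polynomial rings), whereas you invoke the dimension count from the preceding paragraph together with the Cohen--Macaulay property. Both arguments are standard and equally short; yours has the small expository advantage of explicitly recalling the general depth-sensitivity principle for Koszul complexes.
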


\begin{proof}
We need to check that the sequence of elements $\phi(e_1)=\sum_{l=1}^g t_{l1}x_l, \cdots \phi(e_f)=\sum_{l=1}^g t_{lf}x_l$ is a regular sequence in the polynomial ring $\Bbbk[\{t_{ij}\}][x_1 ,\cdots x_g]$ (in $(fg + g)$ variables!) when localized at each maximal ideal $\frakm$ not containing all of $x_1, \cdots x_g$. Suppose say $x_1 \not\in \frakm$. Let us check the definition of regular sequence. First, $\phi(e_1)$ is clearly not a zero divisor. Equation of $\phi(e_1)$ allows us to write $t_{11}$ as a linear combination of other $t$'s ($x_1$ is invertible), so, taking quotient just removes $t_{11}$. Expression of $\phi(e_2)$ does not even involve $t_{11}$, so $\phi(e_2)$ is clearly a nonzero divisor on the quotient. And so on.
\end{proof}

It is also interesting to calculate  the fibers of the other projection $Z \xrightarrow{\iota} \bbP(W^*) \times \Spec R \xrightarrow{p} \Spec R$. Let $\fraka \in \Spec R$ be a maximal ideal corresponding to the matrix with entries $a_{ij}$, $\kappa(\fraka):= R/\fraka$ and $\Spec \kappa(\fraka) \to \Spec R$ inclusion of the point. Then the fiber over $\fraka$ is just the fiber product over $\Spec R$ of $Z$ and $\Spec \kappa(\fraka)$, which is $\Proj_{\kappa(\fraka)} \coker (\kappa(\fraka) \tnsr \phi) \iso \Proj_{\Bbbk} \Sym ((\coker a)^*) \niso \Proj_{\Bbbk} \Sym (\ker a^*)$. In particular, if the rank of $a$ is the maximum possible (equal to $g$) then the fiber above is empty and conversely. This means that the image is exactly the zero locus of $g \times g$ minors, i.e. $\Spec R/I_g$ where $I_g$ is the ideal generated by $g \times g$ minors.

\section{The Complexes $D^{\bullet}(i)$}
\label{sec:GenComp}

We will abbreviate the complex $\scrptK^{\bullet}(\phi)$ from the previous section just as $\scrptK^{\bullet}$. For $i \in \bbZ$ by $\scrptK^{\bullet}(i)$ we mean the complex $\scrptK^{\bullet} \tnsr \scrptO_{\bbP}(i)$ which resolves $\scrptO_Z(i)$. Moreover, let us introduce the following notation:

\begin{definition}
Let $i \in \bbZ$. We define the complexes of $R$-modules:
\[
K^{\bullet}(i):=\R^0p_*\scrptK^{\bullet}(i)
\]

and 
\[
C^{\bullet}(i):= \R^{g-1}p_* \scrptK^{\bullet}(i-g)
\]

The complex $K^{\bullet}(i)$ is supported in the interval $[max(-i,-f), 0]$ and the complex $C^{\bullet}(i)$ is supported in the interval $[-f, min(0,-i)]$
\end{definition}

Note that

\begin{eqnarray*}
C^{\bullet}(f-i)=\R^{g-1}p_* \scrptK^{\bullet}(f-g-i) \niso \R^{g-1}p_*(\scrptH om (\scrptK^{\bullet}(g+i), \scrptO) \tnsr p^*\ExtAlg^gG \tnsr p^*\ExtAlg^fF) \tnsr \ExtAlg^gG^*[f] \niso \\
\R^{g-1}p_* (\scrptH om (\scrptK^{\bullet}(i), \scrptO) \tnsr p^* \ExtAlg^gG(-g)) \tnsr \ExtAlg^fF \tnsr \ExtAlg^gG^*[f] \niso \Hom_R(\R^0p_* \scrptK^{\bullet}(i), R) \tnsr \ExtAlg^fF \tnsr \ExtAlg^gG^* [f] \niso \\
\Hom_R(K^{\bullet}(i), R) \tnsr \ExtAlg^fF \tnsr \ExtAlg^gG^* [f]
\end{eqnarray*}

The first isomorphism is the duality of the Koszul complex $\scrptH om (\scrptK^{\bullet}(j), \scrptO)[f] \niso \scrptK^{\bullet}(f-j) \tnsr p^* \ExtAlg^fF^*$ and the third isomorphism is the Serre duality on the projective space.

Next two diagrams show the complexes $K^{\bullet}(i)$ (Figure~\ref{fig:K}) and $C^{\bullet}(i)$ (Figure~\ref{fig:C}). We abbreviate $\Sym_i G$ as $S_i$. Also for typographical reasons we omit the twist by $\ExtAlg^g G^*$ in complexes $C^{\bullet}(i)$

\begin{figure}

\begin{tikzpicture}[scale=1.4]

\node at (1,0) {\scalebox{0.7}{$0$}};
\node at (0,0) {\scalebox{0.7}{$R$}};
\node at (-1,0) {\scalebox{0.7}{$0$}};

\node at (1,1) {\scalebox{0.7}{$0$}};
\node at (0,1) {\scalebox{0.7}{$G$}};
\node at (-1,1) {\scalebox{0.7}{$F$}};
\node at (-2,1) {\scalebox{0.7}{$0$}};

\node at (1,2) {\scalebox{0.7}{$0$}};
\node at (0,2) {\scalebox{0.7}{$S_2$}};
\node at (-1,2) {\scalebox{0.7}{$S_1 \tnsr F$}};
\node at (-2,2) {\scalebox{0.7}{$\ExtAlg^2 F$}};
\node at (-3,2) {\scalebox{0.7}{$0$}};

\node at (1,3) {\scalebox{0.7}{$0$}};
\node at (0,3) {\scalebox{0.7}{$S_3$}};
\node at (-1,3) {\scalebox{0.7}{$S_2 \tnsr F$}};
\node at (-2,3) {\scalebox{0.7}{$S_1 \tnsr \ExtAlg^2 F$}};
\node at (-3,3) {\scalebox{0.7}{$\ExtAlg^3 F$}};
\node at (-4,3) {\scalebox{0.7}{$0$}};

\node at  (0,5) {$\vdots$};

\node at (1,7) {\scalebox{0.7}{$0$}};
\node at (0,7) {\scalebox{0.7}{$S_i$}};
\node at (-1,7) {\scalebox{0.7}{$S_{i-1} \tnsr F$}};
\node at (-2,7) {\scalebox{0.7}{$\cdots$}};
\node at (-3,7) {\scalebox{0.7}{$\cdots$}};
\node at (-4,7) {\scalebox{0.7}{$S_{i-k} \tnsr \ExtAlg^k F$}};
\node at (-5,7) {\scalebox{0.7}{$\cdots$}};
\node at (-6,7) {\scalebox{0.7}{$S_1 \tnsr \ExtAlg^{i-1} F$}};
\node at (-7,7) {\scalebox{0.7}{$\ExtAlg^i F$}};
\node at (-8,7) {\scalebox{0.7}{$0$}};

\node at  (0,9) {$\vdots$};

\node at (1,10) {\scalebox{0.7}{$0$}};
\node at (0,10) {\scalebox{0.7}{$S_{f-2}$}};
\node at (-1,10) {\scalebox{0.7}{$S_{f-3} \tnsr F$}};
\node at (-2,10) {\scalebox{0.7}{$\cdots$}};
\node at (-8,10) {\scalebox{0.7}{$\cdots$}};
\node at (-9,10) {\scalebox{0.7}{$S_1 \tnsr \ExtAlg^{f-3} F$}};
\node at (-10,10) {\scalebox{0.7}{$\ExtAlg^{f-2} F$}};
\node at (-11,10) {\scalebox{0.7}{$0$}};

\node at (1,11) {\scalebox{0.7}{$0$}};
\node at (0,11) {\scalebox{0.7}{$S_{f-1}$}};
\node at (-1,11) {\scalebox{0.7}{$S_{f-2} \tnsr F$}};
\node at (-2,11) {\scalebox{0.7}{$\cdots$}};
\node at (-9,11) {\scalebox{0.7}{$\cdots$}};
\node at (-10,11) {\scalebox{0.7}{$S_1 \tnsr \ExtAlg^{f-2} F$}};
\node at (-11,11) {\scalebox{0.7}{$\ExtAlg^{f-1} F$}};
\node at (-12,11) {\scalebox{0.7}{$0$}};

\node at (1,12) {\scalebox{0.7}{$0$}};
\node at (0,12) {\scalebox{0.7}{$S_{f}$}};
\node at (-1,12) {\scalebox{0.7}{$S_{f-1} \tnsr F$}};
\node at (-2,12) {\scalebox{0.7}{$\cdots$}};
\node at (-10,12) {\scalebox{0.7}{$\cdots$}};
\node at (-11,12) {\scalebox{0.7}{$S_1 \tnsr \ExtAlg^{f-1} F$}};
\node at (-12,12) {\scalebox{0.7}{$\ExtAlg^{f} F$}};
\node at (-13,12) {\scalebox{0.7}{$0$}};

\node at (1,13) {\scalebox{0.7}{$0$}};
\node at (0,13) {\scalebox{0.7}{$S_{f+1}$}};
\node at (-1,13) {\scalebox{0.7}{$S_{f} \tnsr F$}};
\node at (-2,13) {\scalebox{0.7}{$\cdots$}};
\node at (-10,13) {\scalebox{0.7}{$\cdots$}};
\node at (-11,13) {\scalebox{0.7}{$S_2 \tnsr \ExtAlg^{f} F$}};
\node at (-12,13) {\scalebox{0.7}{$S_1 \tnsr \ExtAlg^{f} F$}};
\node at (-13,13) {\scalebox{0.7}{$0$}};

\node at (1,14) {\scalebox{0.7}{$0$}};
\node at (0,14) {\scalebox{0.7}{$S_{f+2}$}};
\node at (-1,14) {\scalebox{0.7}{$S_{f+1} \tnsr F$}};
\node at (-2,14) {\scalebox{0.7}{$\cdots$}};
\node at (-10,14) {\scalebox{0.7}{$\cdots$}};
\node at (-11,14) {\scalebox{0.7}{$S_1 \tnsr \ExtAlg^{f-1} F$}};
\node at (-12,14) {\scalebox{0.7}{$S_2 \tnsr \ExtAlg^{f} F$}};
\node at (-13,14) {\scalebox{0.7}{$0$}};

\path[->,font=\scriptsize] 

(-1,0) edge[shorten <= .5cm, shorten >= .5cm] (0,0) 

(-1,1) edge[shorten <= .5cm, shorten >= .5cm] (0,1) 
(-2,1) edge[shorten <= .5cm, shorten >= .5cm] (-1,1) 

(-2,2) edge[shorten <= .5cm, shorten >= .5cm] (-1,2) 
(-1,2) edge[shorten <= .5cm, shorten >= .5cm] (0,2)
(-3,2) edge[shorten <= .5cm, shorten >= .5cm] (-2,2) 

(-4,3) edge[shorten <= .5cm, shorten >= .5cm] (-3,3) 
(-3,3) edge[shorten <= .5cm, shorten >= .7cm] (-2,3)
(-2,3) edge[shorten <= .5cm, shorten >= .5cm] (-1,3)
(-1,3) edge[shorten <= .5cm, shorten >= .5cm] (-0,3)

(-8,7) edge[shorten <= .5cm, shorten >= .5cm] (-7,7) 
(-7,7) edge[shorten <= .5cm, shorten >= .7cm] (-6,7)
(-1,7) edge[shorten <= .5cm, shorten >= .5cm] (0,7)
(-2,7) edge[shorten <= .5cm, shorten >= .5cm] (-1,7)
(-4,7) edge[shorten <= .7cm, shorten >= .5cm] (-3,7)
(-5,7) edge[shorten <= .5cm, shorten >= .7cm] (-4,7)
(-6,7) edge[shorten <= .7cm, shorten >= .5cm] (-5,7)

(-1,10) edge[shorten <= .7cm, shorten >= .5cm] (0,10)
(-2,10) edge[shorten <= .5cm, shorten >= .5cm] (-1,10)
(-9,10) edge[shorten <= .7cm, shorten >= .5cm] (-8,10)
(-10,10) edge[shorten <= .5cm, shorten >= .7cm] (-9,10)
(-11,10) edge[shorten <= .5cm, shorten >= .5cm] (-10,10) 

(-1,11) edge[shorten <= .5cm, shorten >= .5cm] (0,11)
(-2,11) edge[shorten <= .5cm, shorten >= .5cm] (-1,11)
(-10,11) edge[shorten <= .7cm, shorten >= .5cm] (-9,11)
(-11,11) edge[shorten <= .5cm, shorten >= .7cm] (-10,11)
(-12,11) edge[shorten <= .5cm, shorten >= .5cm] (-11,11) 

(-1,12) edge[shorten <= .5cm, shorten >= .5cm] (0,12)
(-2,12) edge[shorten <= .5cm, shorten >= .5cm] (-1,12)
(-11,12) edge[shorten <= .7cm, shorten >= .5cm] (-10,12)
(-12,12) edge[shorten <= .5cm, shorten >= .7cm] (-11,12)
(-13,12) edge[shorten <= .5cm, shorten >= .5cm] (-12,12) 

(-1,13) edge[shorten <= .5cm, shorten >= .5cm] (0,13)
(-2,13) edge[shorten <= .5cm, shorten >= .5cm] (-1,13)
(-11,13) edge[shorten <= .7cm, shorten >= .5cm] (-10,13)
(-12,13) edge[shorten <= .6cm, shorten >= .6cm] (-11,13)

(-13,13) edge[shorten <= .5cm, shorten >= .7cm] (-12,13) 

(-1,14) edge[shorten <= .5cm, shorten >= .5cm] (0,14)
(-2,14) edge[shorten <= .5cm, shorten >= .5cm] (-1,14)
(-11,14) edge[shorten <= .7cm, shorten >= .5cm] (-10,14)
(-12,14) edge[shorten <= .55cm, shorten >= .7cm] (-11,14)
(-13,14) edge[shorten <= .5cm, shorten >= .7cm] (-12,14); 

\end{tikzpicture}

\caption{The complexes $K^{\bullet}(i)$}
\label{fig:K}
\end{figure}
\FloatBarrier

\begin{figure}

\begin{tikzpicture}[scale=1.6]

\node at (-12,12) {\scalebox{0.7}{$\ExtAlg^fF$}};
\node at (-13,12) {\scalebox{0.7}{$0$}};
\path[->,font=\scriptsize] 
(-13,12) edge[shorten <= .5cm, shorten >= .5cm] (-12,12);

\node at (-11,11) {\scalebox{0.7}{$\ExtAlg^{f-1}F$}};
\node at (-12,11) {\scalebox{0.7}{$\ExtAlg^f \tnsr S^*_1$}};
\node at (-13,11) {\scalebox{0.7}{$0$}};
\path[->,font=\scriptsize] 
(-13,11) edge[shorten <= .5cm, shorten >= .5cm] (-12,11)
(-12,11) edge[shorten <= .5cm, shorten >= .5cm] (-11,11);

\node at (-10,10) {\scalebox{0.7}{$\ExtAlg^{f-2}F$}};
\node at (-11,10) {\scalebox{0.7}{$\ExtAlg^{f-1}F \tnsr S^*_1$}};
\node at (-12,10) {\scalebox{0.7}{$\ExtAlg^{f}F \tnsr S^*_2$}};
\node at (-13,10) {\scalebox{0.7}{$0$}};
\path[->,font=\scriptsize] 
(-13,10) edge[shorten <= .5cm, shorten >= .7cm] (-12,10)
(-12,10) edge[shorten <= .5cm, shorten >= .7cm] (-11,10)
(-11,10) edge[shorten <= 0.7cm, shorten >= 0.5cm] (-10,10);

\node at (-13,9) {\scalebox{0.7}{$\vdots$}};

\node at (-8,8) {\scalebox{0.7}{$\ExtAlg^{f-i}F$}};
\node at (-9,8) {\scalebox{0.7}{$\ExtAlg^{f-i+1}F \tnsr S^*_1$}};
\node at (-10,8){\scalebox{0.7}{$\cdots$}};
\node at (-11,8) {\scalebox{0.7}{$\ExtAlg^{f-i+j}F \tnsr S^*_j$}};
\node at (-12,8){\scalebox{0.7}{$\cdots$}};
\node at (-13,8) {\scalebox{0.7}{$0$}};
\path[->,font=\scriptsize] 
(-13,8) edge[shorten <= .5cm, shorten >= .7cm] (-12,8)
(-12,8) edge[shorten <= .2cm, shorten >= .8cm] (-11,8)
(-11,8) edge[shorten <= 0.7cm, shorten >= 0.5cm] (-10,8)
(-10,8) edge[shorten <= 0.5cm, shorten >= 0.7cm] (-9,8)
(-9,8) edge[shorten <= 0.7cm, shorten >= 0.5cm] (-8,8);

\node at (-13,7) {\scalebox{0.7}{$\vdots$}};

\node at (-3,6) {\scalebox{0.7}{$F$}};
\node at (-4,6) {\scalebox{0.7}{$\ExtAlg^{2}F \tnsr S^*_1$}};
\node at (-5,6){\scalebox{0.7}{$\cdots$}};
\node at (-8,6){\scalebox{0.7}{$\cdots$}};
\node at (-9,6) {\scalebox{0.7}{$\ExtAlg^{f+j+1}F \tnsr S^*_j$}};
\node at (-10,6){\scalebox{0.7}{$\cdots$}};
\node at (-11,6) {\scalebox{0.7}{$\ExtAlg^{f-1}F \tnsr S^*_{f-2}$}};
\node at (-12,6) {\scalebox{0.7}{$\ExtAlg^{f}F \tnsr S^*_{f-1}$}};
\node at (-13,6) {\scalebox{0.7}{$0$}};
\path[->,font=\scriptsize] 
(-13,6) edge[shorten <= .5cm, shorten >= .7cm] (-12,6)
(-12,6) edge[shorten <= .4cm, shorten >= .8cm] (-11,6)
(-11,6) edge[shorten <= 0.7cm, shorten >= 0.5cm] (-10,6)
(-10,6) edge[shorten <= 0.5cm, shorten >= 0.7cm] (-9,6)
(-9,6) edge[shorten <= 0.7cm, shorten >= 0.5cm] (-8,6)
(-5,6) edge[shorten <= 0.5cm, shorten >= 0.7cm] (-4,6)
(-4,6) edge[shorten <= 0.5cm, shorten >= 0.5cm] (-3,6);

\node at (-2,5)  {\scalebox{0.7}{$R$}};
\node at (-3,5) {\scalebox{0.7}{$\ExtAlg^1F \tnsr S^*_1$}};
\node at (-4,5){\scalebox{0.7}{$\cdots$}};
\node at (-7,5) {\scalebox{0.7}{$\cdots$}};
\node at (-8,5) {\scalebox{0.7}{$\ExtAlg^{j}F \tnsr S^*_j$}};
\node at (-9,5) {\scalebox{0.7}{$\cdots$}};
\node at (-10,5) {\scalebox{0.7}{$\ExtAlg^{f-2}F \tnsr S^*_{f-2}$}};
\node at (-11,5) {\scalebox{0.7}{$\ExtAlg^{f-1}F \tnsr S^*_{f-1}$}};
\node at (-12,5) {\scalebox{0.7}{$\ExtAlg^{f}F \tnsr S^*_{f-1}$}};
\node at (-13,5) {\scalebox{0.7}{$0$}};
\path[->,font=\scriptsize] 
(-13,5) edge[shorten <= .5cm, shorten >= .7cm] (-12,5)
(-12,5) edge[shorten <= .4cm, shorten >= .8cm] (-11,5)
(-11,5) edge[shorten <= .5cm, shorten >= .8cm] (-10,5)
(-10,5) edge[shorten <= 0.5cm, shorten >= 0.7cm] (-9,5)
(-9,5) edge[shorten <= 0.7cm, shorten >= 0.5cm] (-8,5)
(-8,5) edge[shorten <= 0.5cm, shorten >= 0.7cm] (-7,5)
(-4,5) edge[shorten <= 0.5cm, shorten >= 0.5cm] (-3,5)
(-3,5) edge[shorten <= 0.5cm, shorten >= 0.5cm] (-2,5);

\node at (-2,4) {\scalebox{0.7}{$S^*_1$}};
\node at (-3,4) {\scalebox{0.7}{$\ExtAlg^1F \tnsr S^*_2$}};
\node at (-4,4){\scalebox{0.7}{$\cdots$}};
\node at (-7,4) {\scalebox{0.7}{$\cdots$}};
\node at (-8,4) {\scalebox{0.7}{$\ExtAlg^{j}F \tnsr S^*_{j+1}$}};
\node at (-9,4) {\scalebox{0.7}{$\cdots$}};
\node at (-10,4) {\scalebox{0.7}{$\ExtAlg^{f-2}F \tnsr S^*_{f-1}$}};
\node at (-11,4) {\scalebox{0.7}{$\ExtAlg^{f-1}F \tnsr S^*_{f}$}};
\node at (-12,4) {\scalebox{0.7}{$\ExtAlg^{f}F \tnsr S^*_{f+1}$}};
\node at (-13,4) {\scalebox{0.7}{$0$}};
\path[->,font=\scriptsize] 
(-13,4) edge[shorten <= .5cm, shorten >= .7cm] (-12,4)
(-12,4) edge[shorten <= .4cm, shorten >= .8cm] (-11,4)
(-11,4) edge[shorten <= .5cm, shorten >= .8cm] (-10,4)
(-10,4) edge[shorten <= 0.5cm, shorten >= 0.5cm] (-9,4)
(-9,4) edge[shorten <= 0.5cm, shorten >= 0.8cm] (-8,4)
(-8,4) edge[shorten <= 0.5cm, shorten >= 0.7cm] (-7,4)
(-4,4) edge[shorten <= 0.5cm, shorten >= 0.5cm] (-3,4)
(-3,4) edge[shorten <= 0.5cm, shorten >= 0.5cm] (-2,4);

\node at (-2,3) {\scalebox{0.7}{$S^*_2$}};
\node at (-3,3) {\scalebox{0.7}{$\ExtAlg^1F \tnsr S^*_3$}};
\node at (-4,3) {\scalebox{0.7}{$\cdots$}};
\node at (-7,3) {\scalebox{0.7}{$\cdots$}};
\node at (-8,3) {\scalebox{0.7}{$\ExtAlg^{j}F \tnsr S^*_{j+2}$}};
\node at (-9,3) {\scalebox{0.7}{$\cdots$}};
\node at (-10,3) {\scalebox{0.7}{$\ExtAlg^{f-2}F \tnsr S^*_{f}$}};
\node at (-11,3) {\scalebox{0.7}{$\ExtAlg^{f-1}F \tnsr S^*_{f+1}$}};
\node at (-12,3) {\scalebox{0.7}{$\ExtAlg^{f}F \tnsr S^*_{f+2}$}};
\node at (-13,3) {\scalebox{0.7}{$0$}};
\path[->,font=\scriptsize] 
(-13,3) edge[shorten <= .5cm, shorten >= .7cm] (-12,3)
(-12,3) edge[shorten <= .4cm, shorten >= .8cm] (-11,3)
(-11,3) edge[shorten <= .5cm, shorten >= .8cm] (-10,3)
(-10,3) edge[shorten <= 0.5cm, shorten >= 0.5cm] (-9,3)
(-9,3) edge[shorten <= 0.3cm, shorten >= 0.8cm] (-8,3)
(-8,3) edge[shorten <= 0.3cm, shorten >= 0.8cm] (-7,3)
(-4,3) edge[shorten <= 0.5cm, shorten >= 0.5cm] (-3,3)
(-3,3) edge[shorten <= 0.5cm, shorten >= 0.5cm] (-2,3);

\end{tikzpicture}
\caption{The complexes $C^{\bullet}(i)$ with the twist by $\ExtAlg^g G^*$ omitted}
\label{fig:C}
\end{figure}
\FloatBarrier

Now we will describe a spectral sequence which will splice the complexes $C^{\bullet}(i+g)$ and $K^{\bullet}(i)$ together. The resulting complex will be called $D^{\bullet}(i)$ (see Definition~\ref{def:D}).

Fix an $i \in \bbZ$. Let us choose a Cartan-Eilenberg resolution $\scrptK^{\bullet}(i) \to \scrptI^{\bullet, \bullet}$ for $\scrptK^{\bullet}(i)$. It will be concentrated in the strip $[-f, 0] \times [0, \infty]$. This resolution is constructed so that $H^p \scrptK^{\bullet}(i) \to (\cdots \to H^p \scrptI^{\bullet,j-1} \to H^p \scrptI^{\bullet,j} \to H^p \scrptI^{\bullet,j+1} \to \cdots)$ is an injective resolution. Now apply $p_*$:

\begin{figure}[h!]
\begin{tikzpicture}[description/.style={fill=white,inner sep=2pt}]
\matrix (m) [matrix of math nodes, row sep=1.5em,
column sep=1.5em, text height=1.5ex, text depth=0.25ex]
{& \vdots & \vdots & & \vdots & \vdots & \\
0 & p_* \scrptI^{-f,1} & p_* \scrptI^{-f+1,1} & \cdots & p_* \scrptI^{-1,1} & p_* \scrptI^{0,1} & 0\\
0 & p_* \scrptI^{-f,0} & p_* \scrptI^{-f+1,0} & \cdots & p_* \scrptI^{-1,0} & p_* \scrptI^{0,0} & 0 \\};

\path[->,font=\scriptsize] 
(m-2-6) edge (m-2-7) (m-2-5) edge(m-2-6) (m-2-4) edge (m-2-5) (m-2-3) edge (m-2-4) (m-2-2) edge (m-2-3) (m-2-1) edge (m-2-2)

(m-3-6) edge(m-3-7) (m-3-5) edge(m-3-6) (m-3-4) edge (m-3-5) (m-3-3) edge (m-3-4) (m-3-2) edge (m-3-3) (m-3-1) edge (m-3-2)

(m-3-2) edge(m-2-2) (m-3-3) edge(m-2-3) (m-3-5) edge(m-2-5) (m-3-6) edge(m-2-6)
(m-2-2) edge(m-1-2) (m-2-3) edge(m-1-3) (m-2-5) edge(m-1-5) (m-2-6) edge(m-1-6);
\end{tikzpicture}
\end{figure}
\FloatBarrier

As usual, there are two filtrations on this double complex - one by columns and one by rows, giving rise to two  spectral sequences: $_{\bullet}^{col}E_{\bullet}^{\bullet}$ and $_{\bullet}^{row}E_{\bullet}^{\bullet}$ respectively, both of them converge, because our double complex is concentrated in a vertical strip of finite width. Let us first look at $_{\bullet}^{row}E_{\bullet}^{\bullet}$:

\[
_1^{row}E^{p,q} \niso H^p(p_* \scrptI^{\bullet,q}) \niso p_*H^p \scrptI^{\bullet,q}
\]

because by definition of Cartan-Eilenberg resolution all the rows are split. So, on the first page of this spectral sequence the $p$-th column is the result of applying $p_*$ to injective resolution of  $H^p \scrptK^{\bullet}(i)$. Thus:
\[
_2^{row}E^{p,q} \niso \R^qp_*H^p \scrptK^{\bullet}(i)
\]

But recall that $\scrptK^{\bullet}(i)$ is a resolution of $\scrptO_Z(i)$, so the row spectral sequence looks as $_{\infty}^{row}E^{0,q} \niso \R p_*^q \scrptO_Z(i)$ and zero elsewhere. As there is only one nonzero term on each diagonal we see that the induced filtration on $H^{\bullet}(\tot p_* \scrptI^{\bullet, \bullet})$ is trivial, so 

\begin{equation}
\label{eq:tot}
H^n(\tot p_* \scrptI^{\bullet, \bullet}) \niso \R^n p_* \scrptO_Z(i)
\end{equation}

Now we turn to the second spectral sequence:

\begin{eqnarray*}
_1^{col}E^{p,q} \niso \R^qp_*\scrptK^p(i) \niso \R^qp_*(p^*(\ExtAlg^{-p} F) \tnsr \scrptO_{\bbP}(i+p)) \niso \R^qp_* \scrptO_{\bbP}(i+p) \tnsr \ExtAlg^{-p} F \niso \\
\begin{cases}
\Sym_{i+p} G \tnsr \ExtAlg^{-p} F & \text{if } q=0\\
\Sym^*_{-g-(i+p)} G \tnsr \ExtAlg^{-p} F \tnsr \ExtAlg^g G^* &  \text{if } q=g-1
\end{cases}
\end{eqnarray*}

The differentials are horizontal, so we see the complex $_1^{col}E^{\bullet,g-1}$ occuring in $(g-1)-$st row  concentrated in the strip $[-f, min(0,-i-g)]$ which is exactly $C^{\bullet}(i+g)$ and the complex in the $0-$th row concentrated in the strip $[max(-i, -f), 0]$ is $K^{\bullet}(i)$.

Suppose $i \leq -1$ or $f-g+1 \leq i$. In this case only one of $C^{\bullet}(i+g)$ or $K^{\bullet}(i)$ enters the picture and the spectral sequence $_{\bullet}^{col}E^{\bullet,\bullet}$ stabilizes on the second page. 

Otherwise, $C^{\bullet}(i+g)$ and $K^{\bullet}(i)$ exist simultaneously, creating possibility for nontrivial differentials. From the second page until $(g-1)$-st all differentials are zero, but there is a possibly nonzero one on page $g$! So, almost all terms stabilize on the second page, but there is a nontrivial differential 

\[
\theta_0 \colon H^{-i-g}C^{\bullet}(i+g) \to H^{-i} K^{\bullet}(i)
\]

Let 

\[
\theta \colon \ExtAlg^{i+g} F \tnsr \ExtAlg^g G^* \onto H^{-i-g}C^{\bullet}(i+g) \xrightarrow{\theta_0} H^{-i}K^{\bullet}(i) \into K^{-i}(i)=\ExtAlg^i F
\]

be the composition. Now we can splice $C^{\bullet}(i+g)$ and $K^{\bullet}(i)$ together. 

\begin{definition} 
\label{def:D}

Using the notation above for each $i \in \bbZ$ we define the complex 

\[
D^{\bullet}(i):= C^{\bullet}(i+g) \xrightarrow{\theta} K^{\bullet}(i)
\]

This definition also applies when no splicing is necessary (in which case splicing map $\theta_0$ is just zero) and $D^{\bullet}(i)$ coincides with one of $C^{\bullet}(i+g)$ or $K^{\bullet}(i)$. In this case, the cohomological grading of $D^{\bullet}(i)$ is the same as on $C^{\bullet}(i+g)$ or $K^{\bullet}(i)$. Otherwise (when the splicing map is non-zero), we define cohomological grading on $D^{\bullet}(i)$ so that the furthest non-zero term in the direction against the differential is in cohomological degree $-f$.
\end{definition}

The Figure~\ref{fig:D} shows the schematic picture of complexes $D^{\bullet}(i)$.

\begin{figure}
\begin{tikzpicture}[scale=1]
\draw [line width=2] (-12,13) -- (0,13);
\node[right] at (0,13){\tiny{$K^{\bullet}(f+1)$}};
\draw [line width=2] (-12,12) -- (0,12);
\node[right] at (0,12){\tiny{$K^{\bullet}(f)$}};
\draw [line width=2] (-11,11) -- (0,11);
\node[right] at (0,11){\tiny{$K^{\bullet}(f-1)$}};
\draw [line width=2] (-10,10) -- (0,10);
\draw [line width=2] (-9, 9) -- (0,9);
\draw [line width=2] (-8, 8) -- (0,8);
\node[right] at (0,7){$\vdots$};
\draw [line width=2] (-6, 6) -- (0,6);
\node[right] at (0,6){\tiny{$K^{\bullet}(f-g)$}};
\draw [line width=2] (-4, 4) -- (0,4);
\draw [line width=2] (-3, 3) -- (0,3);
\draw [line width=2] (-2, 2) -- (0,2);
\draw [line width=2] (-1, 1) -- (0,1);
\node[right] at (0,1){\tiny{$K^{\bullet}(1)$ - Buchsbaum-Rim}};
\node[right] at (0,0){\tiny{$K^{\bullet}(0)$ - Eagon-Northcott}};
\node[] at (0,0){\tiny{$\bullet$}};
\draw [dashed] (-12, 12) -- (0,0);
\draw [dashed] (-12, 12) -- (-12,14);
\node[] at (-12,6){\tiny{$\bullet$}};
\node[left] at (-12,6){\tiny{$C^{\bullet}(f) \tnsr \ExtAlg^gG^*$}};
\draw [line width=2] (-12,5) -- (-11,5);
\node[left] at (-12,5){\tiny{$C^{\bullet}(f-1) \tnsr \ExtAlg^gG^*$}};
\draw [line width=2] (-12, 4) -- (-10,4);
\node[left] at (-12,4){\tiny{$C^{\bullet}(f-2) \tnsr \ExtAlg^gG^*$}};
\draw [line width=2] (-12, 3) -- (-9,3);
\draw [line width=2] (-12, 2) -- (-8,2);
\draw [line width=2] (-12, 1) -- (-7,1);
\node[left] at (-12,1){\tiny{$C^{\bullet}(g+1) \tnsr \ExtAlg^gG^*$}};
\draw [line width=2] (-12, 0) -- (-6,0);
\node[left] at (-12,0){\tiny{$C^{\bullet}(g) \tnsr \ExtAlg^gG^*$}};
\draw [line width=2] (-12,-1) -- (-5,-1);
\draw [line width=2] (-12,-2) -- (-4,-2);
\draw [line width=2] (-12,-3) -- (-3,-3);
\node[left] at (-12,-4){$\vdots$};
\draw [line width=2] (-12,-5) -- (-1,-5);
\node[left] at (-12,-5){\tiny{$C^{\bullet}(1) \tnsr \ExtAlg^gG^*$}};
\draw [line width=2] (-12,-6) -- (0,-6);
\node[left] at (-12,-6){\tiny{$C^{\bullet}(0) \tnsr \ExtAlg^gG^*$}};
\draw [line width=2] (-12,-7) -- (0,-7);
\node[left] at (-12,-7){\tiny{$C^{\bullet}(-1) \tnsr \ExtAlg^gG^*$}};
\node at (-6,-8) {$\vdots$};
\node at (-6,14) {$\vdots$};
\draw [dashed] (-12, 6) -- (0,-6); 
\draw [dashed] (0, -6) -- (0,-8);
\draw [->] (-11,6) -- (-7,6);
\draw [->] (-10,5) -- (-6,5);
\draw [->] (-9,4) -- (-5,4);
\draw [->] (-8,3) -- (-4,3);
\draw [->] (-7,2) -- (-3,2);
\draw [->] (-6,1) -- (-2,1);
\draw [->] (-5,0) -- (-1,0);
\node[below] at (-6,0) {\tiny{$(-g,0)$}};
\node[below] at (-6,6) {\tiny{$(-(f-g),f-g)$}};
\draw [dashed] (-14, 3) -- (2,3);
\end{tikzpicture}
\label{fig:D}
\caption{The complexes $D^{\bullet}(i)$}
\end{figure}
\FloatBarrier

There is a duality for the family of complexes $D^{\bullet}(i)$: up to an appropriate shift we have that 

\[
\Hom(D^{\bullet}(i), R) \niso D^{\bullet}(f-g-i) \tnsr \ExtAlg^f F^* \tnsr \ExtAlg^g G
\]

\subsection{Cohomology of $D^{\bullet}(i)$} Note that in $_{\infty}^{col}E_{\bullet}^{\bullet}$ contains at most one nonzero term on each diagonal and since the spectral sequence converges, 

\begin{equation}
\label{eq:tot2}
_{\infty}^{col}E^{p,q} \niso H^{p+q}(\tot p_* \scrptI^{\bullet, \bullet}) 
\end{equation}

Moreover, we can compute $_{\infty}^{col}E^{p,q}$ from the complexes $D^{\bullet}(i)$. So combining with equations~\ref{eq:tot} and~\ref{eq:tot2} we get 

\begin{equation}
\R^n p_* \scrptO_Z(i) \niso \mbox{cohomology of $D^{\bullet}(i)$ at the intersection with the line $p+q=n$}
\end{equation}

Let us draw a picture of the $\infty$-pages of both spectral sequences superimposed on each other. The expression above means that $D^{\bullet}(i)$ has a chance to "pick up" cohomology only at intersections with diagonals in the grey region. For example, in the picture below (which was generated using values $f=12$, $g=6$, $i=3$) we see that we only have one such intersection, namely at $(0,0)$, meaning that $D^{\bullet}(i)$ is a resolution in this case.

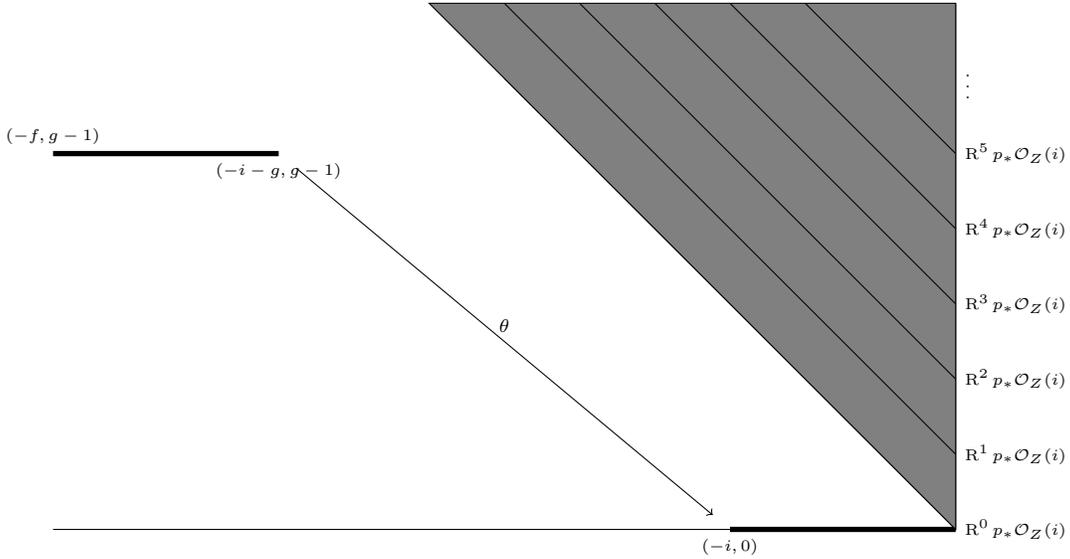
\begin{figure}[h!]
\begin{tikzpicture}[scale=1]
\draw (-12,0) -- (0,0);
\draw (0,0) -- (0, 7);
\draw (0,0) -- (-6,6);
\draw [line width=2] (-12, 5) -- (-9, 5);
\draw [line width=2] (-3,0) -- (0,0);
\node[above] at (-12,5){\tiny{$(-f,g-1)$}};
\node[below] at (-9,5){\tiny{$(-i-g,g-1)$}};
\node[below] at (-3,0){\tiny{$(-i,0)$}};
\node[right] at (0,0){\tiny{$\R^0 p_* \scrptO_Z(i)$}};
\node[right] at (0,1){\tiny{$\R^1 p_* \scrptO_Z(i)$}};
\node[right] at (0,2){\tiny{$\R^2 p_* \scrptO_Z(i)$}};
\node[right] at (0,3){\tiny{$\R^3 p_* \scrptO_Z(i)$}};
\node[right] at (0,4){\tiny{$\R^4 p_* \scrptO_Z(i)$}};
\node[right] at (0,5){\tiny{$\R^5 p_* \scrptO_Z(i)$}};
\node[right] at (0,6){\tiny{$\vdots$}};
\draw[fill=gray]  (0,0) -- (-7,7) -- (0,7) -- cycle;
\draw  (0,1) -- (-6,7);
\draw  (0,2) -- (-5,7);
\draw  (0,3) -- (-4,7);
\draw  (0,4) -- (-3,7);
\draw  (0,5) -- (-2,7);
\path[->,font=\scriptsize] 
(-9,5) edge[shorten <= .3cm, shorten >= .3cm]node[above]{$\theta$} (-3,0);
\end{tikzpicture}
\caption{$f=12$, $g=6$, $i=3$}
\end{figure}
\FloatBarrier

Let us see what happens when we vary $i$. In general, as $i$ decreases from being very positive to very negative, the lower part $K^{\bullet}(i)$ shrinks, but the upper part $C^{\bullet}(i+g)$ stretches. Only the upper part can intersect with the grey region, and this determines when and where the cohomology of $D^{\bullet}(i)$ has to be.

So, let us start decreasing $i$. When $i=1$ we see the Buchsbaum-Rim complex. The lower part only has two terms: $S_0 G \tnsr F \niso F \to S_1 G = G$.

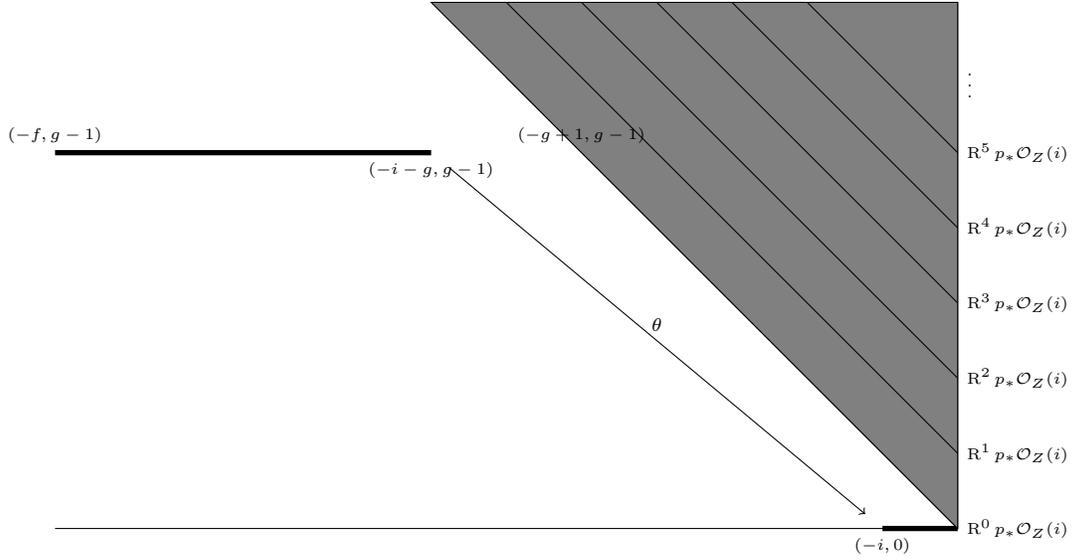
\begin{figure}[h!]
\begin{tikzpicture}[scale=1]
\draw (-12,0) -- (0,0);
\draw (0,0) -- (0, 7);
\draw (0,0) -- (-6,6);
\draw [line width=2] (-12, 5) -- (-7, 5);
\draw [line width=2] (-1,0) -- (0,0);
\node[above] at (-12,5){\tiny{$(-f,g-1)$}};
\node[below] at (-7,5){\tiny{$(-i-g,g-1)$}};
\node[below] at (-1,0){\tiny{$(-i,0)$}};
\node[right] at (0,0){\tiny{$\R^0 p_* \scrptO_Z(i)$}};
\node[right] at (0,1){\tiny{$\R^1 p_* \scrptO_Z(i)$}};
\node[right] at (0,2){\tiny{$\R^2 p_* \scrptO_Z(i)$}};
\node[right] at (0,3){\tiny{$\R^3 p_* \scrptO_Z(i)$}};
\node[right] at (0,4){\tiny{$\R^4 p_* \scrptO_Z(i)$}};
\node[right] at (0,5){\tiny{$\R^5 p_* \scrptO_Z(i)$}};
\node[right] at (0,6){\tiny{$\vdots$}};
\draw[fill=gray]  (0,0) -- (-7,7) -- (0,7) -- cycle;
\draw  (0,1) -- (-6,7);
\draw  (0,2) -- (-5,7);
\draw  (0,3) -- (-4,7);
\draw  (0,4) -- (-3,7);
\draw  (0,5) -- (-2,7);
\node[above] at (-5,5){\tiny{$(-g+1, g-1)$}};
\path[->,font=\scriptsize] 
(-7,5) edge[shorten <= .3cm, shorten >= .3cm]node[above]{$\theta$} (-1,0);
\end{tikzpicture}
\caption{$f=12$, $g=6$, $i=1$}
\end{figure}
\FloatBarrier

Decrease $i$ by one again: put $i=0$. At this point the lower part still exists, but only has one term in it, namely $S_0 G = R$. This is the Eagon-Northcott complex, which resolves $R/I_g$ - the quotient by the ideal of maximal minors of $\phi$

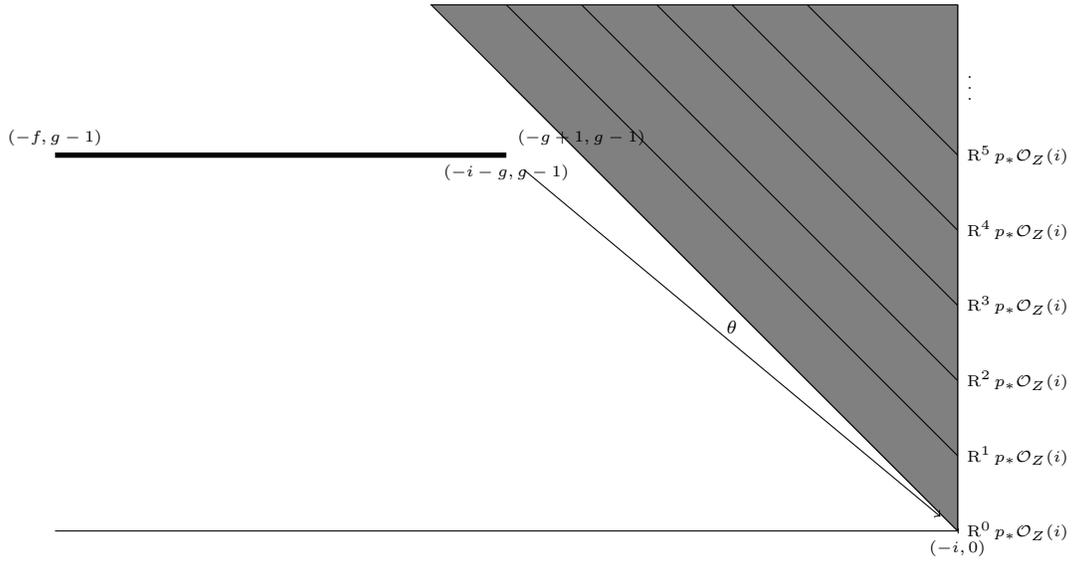
\begin{figure}[h!]
\begin{tikzpicture}[scale=1]
\draw (-12,0) -- (0,0);
\draw (0,0) -- (0, 7);
\draw (0,0) -- (-6,6);
\draw [line width=2] (-12, 5) -- (-6, 5);
\draw [line width=2] (0,0) -- (0,0);
\node[above] at (-12,5){\tiny{$(-f,g-1)$}};
\node[below] at (-6,5){\tiny{$(-i-g,g-1)$}};
\node[below] at (0,0){\tiny{$(-i,0)$}};
\node[right] at (0,0){\tiny{$\R^0 p_* \scrptO_Z(i)$}};
\node[right] at (0,1){\tiny{$\R^1 p_* \scrptO_Z(i)$}};
\node[right] at (0,2){\tiny{$\R^2 p_* \scrptO_Z(i)$}};
\node[right] at (0,3){\tiny{$\R^3 p_* \scrptO_Z(i)$}};
\node[right] at (0,4){\tiny{$\R^4 p_* \scrptO_Z(i)$}};
\node[right] at (0,5){\tiny{$\R^5 p_* \scrptO_Z(i)$}};
\node[right] at (0,6){\tiny{$\vdots$}};
\draw[fill=gray]  (0,0) -- (-7,7) -- (0,7) -- cycle;
\draw  (0,1) -- (-6,7);
\draw  (0,2) -- (-5,7);
\draw  (0,3) -- (-4,7);
\draw  (0,4) -- (-3,7);
\draw  (0,5) -- (-2,7);
\node[above] at (-5,5){\tiny{$(-g+1, g-1)$}};
\path[->,font=\scriptsize] 
(-6,5) edge[shorten <= .3cm, shorten >= .3cm]node[above]{$\theta$} (0,0);
\end{tikzpicture}
\caption{$f=12$, $g=6$, $i=0$}
\end{figure}
\FloatBarrier

Next case $i=-1$ is somewhat exciting: lower part ceases to exist (and so no more splicing) and for the first time the upper part intersects the grey region - this is still a resolution.

\begin{figure}[h!]
\begin{tikzpicture}[scale=1]
\draw (-12,0) -- (0,0);
\draw (0,0) -- (0, 7);
\draw (0,0) -- (-6,6);
\draw [line width=2] (-12, 5) -- (-5, 5);
\node[above] at (-12,5){\tiny{$(-f,g-1)$}};
\node[right] at (0,0){\tiny{$\R^0 p_* \scrptO_Z(i)$}};
\node[right] at (0,1){\tiny{$\R^1 p_* \scrptO_Z(i)$}};
\node[right] at (0,2){\tiny{$\R^2 p_* \scrptO_Z(i)$}};
\node[right] at (0,3){\tiny{$\R^3 p_* \scrptO_Z(i)$}};
\node[right] at (0,4){\tiny{$\R^4 p_* \scrptO_Z(i)$}};
\node[right] at (0,5){\tiny{$\R^5 p_* \scrptO_Z(i)$}};
\node[right] at (0,6){\tiny{$\vdots$}};
\draw[fill=gray]  (0,0) -- (-7,7) -- (0,7) -- cycle;
\draw  (0,1) -- (-6,7);
\draw  (0,2) -- (-5,7);
\draw  (0,3) -- (-4,7);
\draw  (0,4) -- (-3,7);
\draw  (0,5) -- (-2,7);
\node[below] at (-5,5){\tiny{$(-i-g,g-1)$}};
\node[above] at (-5,5){\tiny{$(-g+1, g-1)$}};
\end{tikzpicture}
\caption{$f=12$, $g=6$, $i=-1$}
\end{figure}
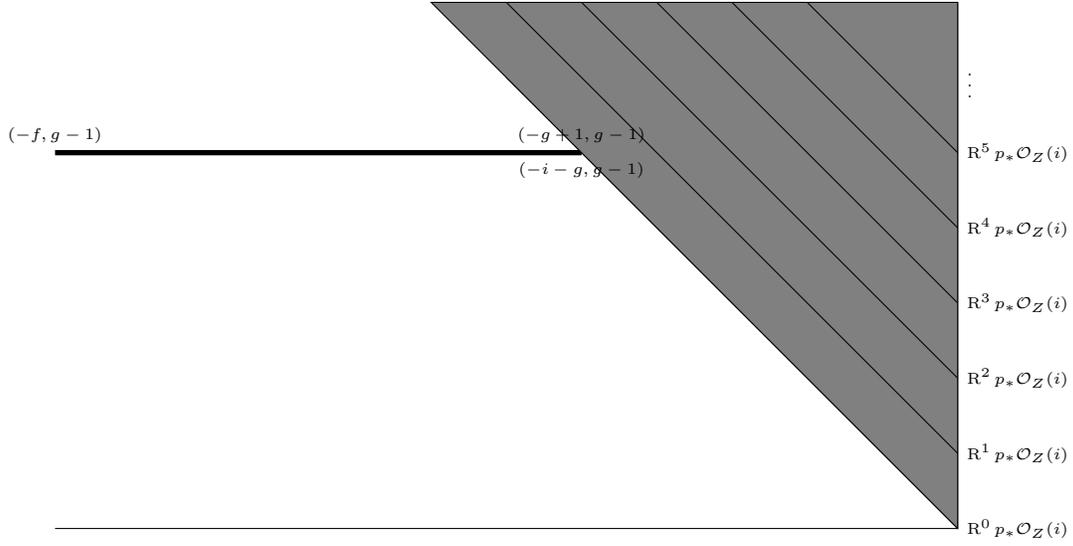
\FloatBarrier

When $i=-2$ there are two intersections, so $D^{\bullet}(-2)$ can have cohomology in two places. We will see (Theorem~\ref{thm:main}) that it actually happens.

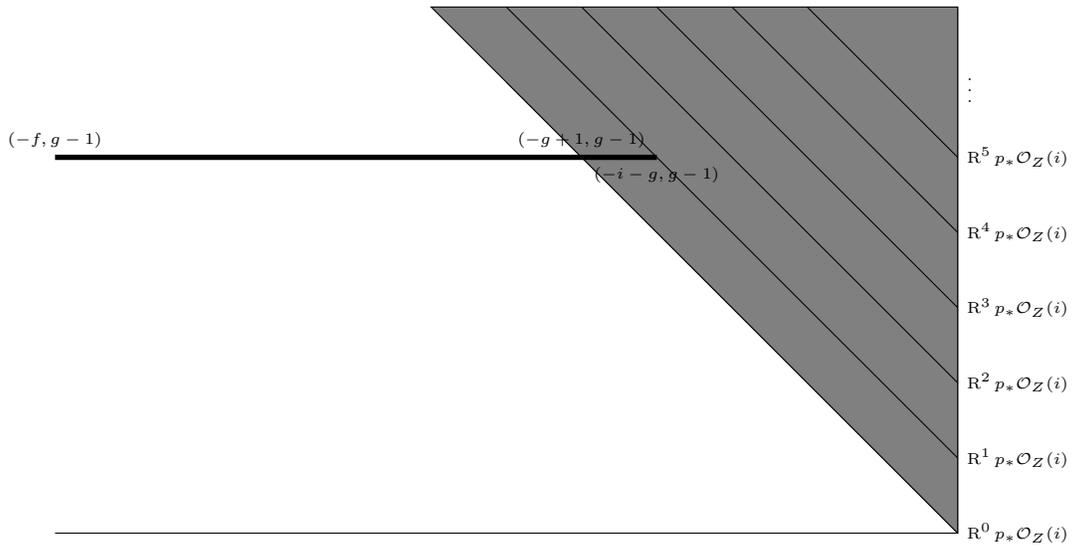
\begin{figure}[h!]
\begin{tikzpicture}[scale=1]
\draw (-12,0) -- (0,0);
\draw (0,0) -- (0, 7);
\draw (0,0) -- (-6,6);
\node[right] at (0,0){\tiny{$\R^0 p_* \scrptO_Z(i)$}};
\node[right] at (0,1){\tiny{$\R^1 p_* \scrptO_Z(i)$}};
\node[right] at (0,2){\tiny{$\R^2 p_* \scrptO_Z(i)$}};
\node[right] at (0,3){\tiny{$\R^3 p_* \scrptO_Z(i)$}};
\node[right] at (0,4){\tiny{$\R^4 p_* \scrptO_Z(i)$}};
\node[right] at (0,5){\tiny{$\R^5 p_* \scrptO_Z(i)$}};
\node[right] at (0,6){\tiny{$\vdots$}};
\draw[fill=gray]  (0,0) -- (-7,7) -- (0,7) -- cycle;
\draw  (0,1) -- (-6,7);
\draw  (0,2) -- (-5,7);
\draw  (0,3) -- (-4,7);
\draw  (0,4) -- (-3,7);
\draw  (0,5) -- (-2,7);
\draw [line width=2] (-12, 5) -- (-4, 5);
\node[above] at (-12,5){\tiny{$(-f,g-1)$}};
\node[below] at (-4,5){\tiny{$(-i-g,g-1)$}};
\node[above] at (-5,5){\tiny{$(-g+1, g-1)$}};
\end{tikzpicture}
\caption{$f=12$, $g=6$, $i=-2$}
\end{figure}
\FloatBarrier

Finally, let us look at $i=-6$. This is where all possible intersections happen for the first time. Decreasing $i$ further will not change the picture.

\begin{figure}[h!]
\begin{tikzpicture}[scale=1]
\draw (-12,0) -- (0,0);
\draw (0,0) -- (0, 7);
\draw (0,0) -- (-6,6);
\node[right] at (0,0){\tiny{$\R^0 p_* \scrptO_Z(i)$}};
\node[right] at (0,1){\tiny{$\R^1 p_* \scrptO_Z(i)$}};
\node[right] at (0,2){\tiny{$\R^2 p_* \scrptO_Z(i)$}};
\node[right] at (0,3){\tiny{$\R^3 p_* \scrptO_Z(i)$}};
\node[right] at (0,4){\tiny{$\R^4 p_* \scrptO_Z(i)$}};
\node[right] at (0,5){\tiny{$\R^5 p_* \scrptO_Z(i)$}};
\node[right] at (0,6){\tiny{$\vdots$}};
\draw[fill=gray]  (0,0) -- (-7,7) -- (0,7) -- cycle;
\draw  (0,1) -- (-6,7);
\draw  (0,2) -- (-5,7);
\draw  (0,3) -- (-4,7);
\draw  (0,4) -- (-3,7);
\draw  (0,5) -- (-2,7);
\draw [line width=2] (-12, 5) -- (0, 5);
\node[above] at (-12,5){\tiny{$(-f,g-1)$}};
\node[below] at (0,5){\tiny{$(-i-g,g-1)$}};
\node[above] at (-5,5){\tiny{$(-g+1, g-1)$}};
\end{tikzpicture}
\caption{$f=12$, $g=6$, $i=-6$}
\end{figure}
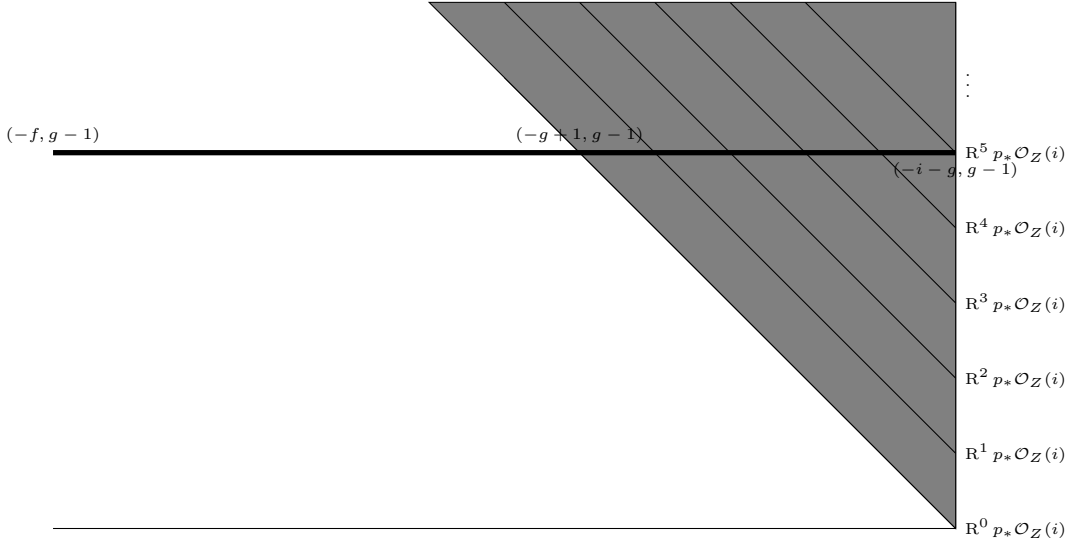
\FloatBarrier

With above pictures in mind, the proof of the following theorem should be clear. The statement about $\Ext$ follows from the duality $\Hom(D^{\bullet}(i), R) \niso D^{\bullet}(f-g-i) \tnsr \ExtAlg^f F^* \tnsr \ExtAlg^g G$.

\begin{theorem}
\label{thm:main}

(Kempf) Let $\phi \colon F \to G$ be the generic map. 

\begin{enumerate}
\item For $i \geq 0$ the complex $D^{\bullet}(i)$ is a resolution of $\R^0p_* \scrptO_Z(i) \niso \Sym^i \coker \phi$
\item For $i<0$ the complex $D^{\bullet}$ coincides with $C^{\bullet}(i+g)$ and all of it's cohomology is concentrated in the interval $[-g+1, min(0,-g-i)]$. Moreover, for each $j \in [-g+1, min(0,-g-i)]$ the cohomology $H^jD^{\bullet}(i)$ is non-zero and

\[
H^jD^{\bullet}(i) \niso \R^{j+g-1}p_* \scrptO_Z(i) \niso \Ext_R^{j+f}(\Sym_{f-g-i} \coker \phi, R) \tnsr \ExtAlg^fF \tnsr \ExtAlg^gG^*
\]

\end{enumerate}

\end{theorem}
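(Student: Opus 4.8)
\emph{Overview and Step 1 (from the spectral sequence to $\R^\bullet p_*\scrptO_Z(i)$).} The plan is to cash in the spectral--sequence apparatus of Section~\ref{sec:GenComp}, compute the relevant direct images on $\bbP(W^*)$ by Borel--Bott--Weil, and read off the two statements. On the column spectral sequence ${}_{\infty}^{col}E_{\bullet}^{\bullet}$ the $E_1$-page lives only in the rows $q=0$ and $q=g-1$, so the $E_2$-page is $K^\bullet(i)$ in row $0$ and $C^\bullet(i+g)$ in row $g-1$; a differential $d_r\colon E_r^{p,q}\to E_r^{p+r,q-r+1}$ can be nonzero only when it runs from row $g-1$ to row $0$, which forces $r=g$, so $E_2=\dots=E_g$, the only higher differential is $\theta_0=d_g$, and $E_{g+1}=E_\infty$. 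Unwinding Definition~\ref{def:D}, the group ${}_{\infty}^{col}E^{p,q}$ is precisely the cohomology of the spliced complex $D^\bullet(i)$ at the spot $(p,q)$; combined with~\eqref{eq:tot} and the fact that ${}_{\infty}^{col}E$ has at most one nonzero group per diagonal, and after matching degrees (a term of $D^\bullet(i)$ in cohomological degree $j$ lies in total degree $p+q=j+g-1$ whenever the $C$-part is present, in particular for $i<0$), this produces an isomorphism of $R$-modules $H^j D^\bullet(i)\niso\R^{j+g-1}p_*\scrptO_Z(i)$, with the analogous statement in the other normalisations. So the theorem reduces to computing $\R^m p_*\scrptO_Z(i)$ and pinning down its vanishing.

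\emph{Step 2 (the direct images; case $i\geq 0$).} Since $\pi\iota\colon Z\to\bbP(W^*)$ is affine with $(\pi\iota)_*\scrptO_Z\niso\Sym(\scrptT(-1)\tnsr V)$ and $\scrptO_Z(i)$ is pulled back from $\scrptO_{\bbP(W^*)}(i)$, the projection formula and the Cauchy formula give, as $R$-modules,
\[
\R^m p_*\scrptO_Z(i)\ \niso\ \bigoplus_{\lambda}\ H^m\bigl(\bbP(W^*),\bbS_\lambda(\scrptT(-1))(i)\bigr)\tnsr_{\Bbbk}\bbS_\lambda V,
\]
the sum over partitions with at most $g-1$ parts, the $R$-action coming from the internal grading of $\Sym$. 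Now apply Borel--Bott--Weil as in Example~\ref{ex:coh}. For $i\geq 0$ the sequence $(i+g-1,-\lambda_{g-1}+g-2,\dots,-\lambda_1)$ is already strictly decreasing, so every summand has cohomology only in degree $0$; hence $\R^{>0}p_*\scrptO_Z(i)=0$ and $D^\bullet(i)$ is a resolution. For $i\geq 1$ the arrow of $D^\bullet(i)$ into its cohomology spot is $S_{i-1}\tnsr F\to S_i$, precisely the presentation of $\Sym^i(G/\phi(F))=\Sym^i\coker\phi$; for $i=0$ it is $\ExtAlg^g F\tnsr\ExtAlg^g G^*\xrightarrow{\theta}R$, whose $H$-equivariant image is the ideal $I_g$ of maximal minors, so $D^\bullet(0)$ resolves $R/I_g$. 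This proves part~(1).

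\emph{Step 3 (case $i<0$: range, non-vanishing, $\Ext$-form).} For $i<0$ one has $K^\bullet(i)=0$, hence $D^\bullet(i)=C^\bullet(i+g)$, supported in cohomological degrees $[-f,\min(0,-g-i)]$. By Example~\ref{ex:coh}, $H^m(\bbP(W^*),\bbS_\lambda(\scrptT(-1))(i))\neq 0$ exactly when $\lambda$ satisfies~\eqref{eq:lambda}, and such a $\lambda$ exists iff $0\leq m\leq\min(g-1,-i-1)$; indeed a rectangular partition $\lambda=((-i-m)^{g-m-1},0^{m})$ does the job (read appropriately at the extremes $m=0$ and $m=g-1$). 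So $\R^m p_*\scrptO_Z(i)$ is nonzero exactly for $m$ in that range, and by Step~1 $H^j D^\bullet(i)\niso\R^{j+g-1}p_*\scrptO_Z(i)$ is nonzero exactly for $j\in[-g+1,\min(0,-g-i)]$, the asserted range. For the $\Ext$-form, set $i'=f-g-i\geq 1$; then $D^\bullet(i')=K^\bullet(i')$ is a \emph{free} resolution of $\Sym^{i'}\coker\phi$ by part~(1), and the duality $C^\bullet(f-i')\niso\Hom_R(K^\bullet(i'),R)\tnsr\ExtAlg^f F\tnsr\ExtAlg^g G^*[f]$ of Section~\ref{sec:GenComp} becomes $D^\bullet(i)\niso\Hom_R(D^\bullet(i'),R)\tnsr\ExtAlg^f F\tnsr\ExtAlg^g G^*[f]$; taking cohomology gives $H^j D^\bullet(i)\niso\Ext_R^{j+f}(\Sym_{f-g-i}\coker\phi,R)\tnsr\ExtAlg^f F\tnsr\ExtAlg^g G^*$.

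\emph{The main obstacle.} Two points carry the real weight. First, justifying rigorously that ${}_{\infty}^{col}E$ computes the cohomology of $D^\bullet(i)$ spot by spot: one must check that the surjection $\ExtAlg^{i+g}F\tnsr\ExtAlg^g G^*\onto H^{-i-g}C^\bullet(i+g)$ and the injection $H^{-i}K^\bullet(i)\into\ExtAlg^i F$ appearing in the definition of $\theta$ identify $\ker\theta_0$ and $\coker\theta_0$ with the surviving $E_\infty$-groups -- the precise form of ``clear from the pictures''. Second, and essentially the only place a concrete computation is needed, the non-vanishing: exhibiting for every admissible $m$ an actual partition satisfying~\eqref{eq:lambda}, which is exactly why the cohomology must be nonzero ``whenever allowed'' by part~(1). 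The remaining identifications (the Cauchy decomposition, the $\Ext$-reformulation, and $\R^0 p_*\scrptO_Z(i)\niso\Sym^i\coker\phi$ for $i\geq 1$) are formal, modulo the harmless clash at $i=0$, where $\R^0 p_*\scrptO_Z(0)$ is $R/I_g$ rather than $\Sym^0\coker\phi=R$.
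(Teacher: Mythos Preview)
Your proposal is correct and follows essentially the same approach as the paper: the spectral--sequence analysis of Section~\ref{sec:GenComp} to identify $H^jD^\bullet(i)$ with $\R^{j+g-1}p_*\scrptO_Z(i)$, the Cauchy/Borel--Bott--Weil computation of Example~\ref{ex:coh} (which the paper packages as Proposition~\ref{prop:coh1}) for the vanishing range and the non-vanishing, and the Koszul/Serre duality $C^\bullet(f-i')\niso\Hom_R(K^\bullet(i'),R)\tnsr\ExtAlg^fF\tnsr\ExtAlg^gG^*[f]$ for the $\Ext$-identification. Your write-up is in fact more explicit than the paper's---you exhibit a concrete partition witnessing non-vanishing and you correctly flag the discrepancy at $i=0$ where $\R^0p_*\scrptO_Z(0)=R/I_g$ rather than $\Sym^0\coker\phi=R$.
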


The fact that for each $j \in [-g+1, min(0,-g-i)]$ the cohomology $H^jD^{\bullet}(i)$ is non-zero will be proven later in Section~\ref{sec:StrRep}. The Figure~\ref{fig:coh} shows the schematic picture of cohomology of complexes $D^{\bullet}(i)$. Note that by Theorem~\ref{thm:main} there is much more exactness than as stated in Theorem A2.10 in \cite{Eis}. Moreover, everything in Sections 2.16-2.18 of \cite{Br} follows also from Theorem~\ref{thm:main} (and its proof).

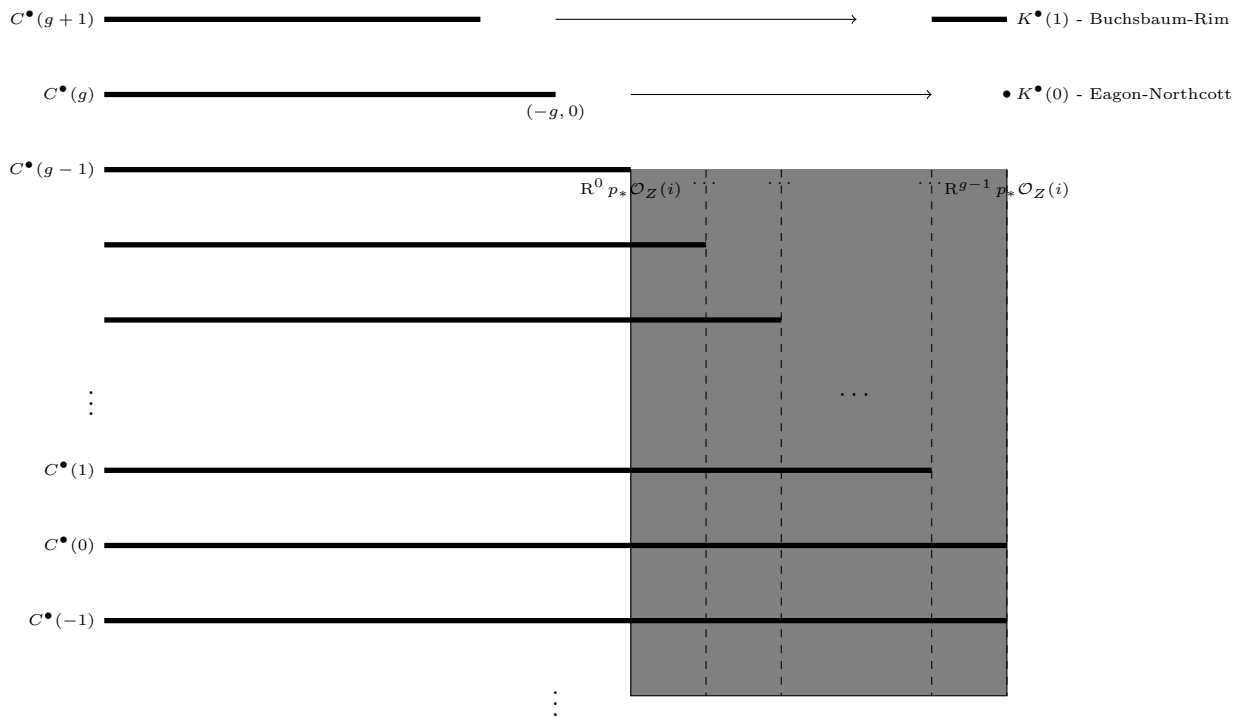
\begin{figure}
\begin{tikzpicture}[scale=1]
\draw[fill=gray]  (-5,-1) -- (-5,-8) -- (0,-8) -- (0,-1);
\draw [line width=2] (-1,1) -- (0,1);
\node[right] at (0,1){\tiny{$K^{\bullet}(1)$ - Buchsbaum-Rim}};
\node[right] at (0,0){\tiny{$K^{\bullet}(0)$ - Eagon-Northcott}};
\node[] at (0,0){\tiny{$\bullet$}};
\draw [line width=2] (-12,1) -- (-7,1);
\node[left] at (-12,1){\tiny{$C^{\bullet}(g+1)$}};
\draw [line width=2] (-12, 0) -- (-6,0);
\node[left] at (-12,0){\tiny{$C^{\bullet}(g)$}};
\draw [line width=2] (-12,-1) -- (-5,-1);
\node[left] at (-12,-1){\tiny{$C^{\bullet}(g-1)$}};
\draw [line width=2] (-12,-2) -- (-4,-2);
\draw [line width=2] (-12,-3) -- (-3,-3);
\node[left] at (-12,-4){$\vdots$};
\draw [line width=2] (-12,-5) -- (-1,-5);
\node[left] at (-12,-5){\tiny{$C^{\bullet}(1)$}};
\draw [line width=2] (-12,-6) -- (0,-6);
\node[left] at (-12,-6){\tiny{$C^{\bullet}(0)$}};
\draw [line width=2] (-12,-7) -- (0,-7);
\node[left] at (-12,-7){\tiny{$C^{\bullet}(-1)$}};
\node at (-6,-8) {$\vdots$};
\draw [->] (-6,1) -- (-2,1);
\draw [->] (-5,0) -- (-1,0);
\node[below] at (-6,0) {\tiny{$(-g,0)$}};
\node[below] at (-5,-1) {\tiny{$\R^0p_* \scrptO_Z(i)$}};
\draw [dashed] (-4,-1) -- (-4,-8);
\node[below] at (-4,-1) {\tiny{$\cdots$}};
\draw [dashed] (-3,-1) -- (-3,-8);
\node[below] at (-3,-1) {\tiny{$\cdots$}};
\draw [dashed] (-1,-1) -- (-1,-8);
\node[below] at (-1,-1) {\tiny{$\cdots$}};
\node[below] at (0,-1) {\tiny{$\R^{g-1}p_* \scrptO_Z(i)$}};
\draw [dashed] (0,-1) -- (0,-8);
\node at (-2, -4) {$\cdots$};
\end{tikzpicture}
\caption{The cohomology of $D^{\bullet}(i)$}
\label{fig:coh}
\end{figure}
\FloatBarrier

\begin{remark}
In fact, the results in Theorem~\ref{thm:main} remain valid for any map $\phi$ such that the associated Koszul complex of sheaves $\scrptK^{\bullet}(\phi)$ constructed as in Subsection~\ref{subsec:GenConst} is exact.
\end{remark}

\section{Structure of Cohomology of $D^{\bullet}(i)$ as a graded $H$-representation}
\label{sec:StrRep}

The Theorem~\ref{thm:main} of previous section reduces computation of cohomology of $D^{\bullet}(i)$ to computing the sheaf cohomology $R^{\bullet}p_* \scrptO_Z(i)$, so in this section we concentrate on this. Throughout this section we assume $i<0$.

The cohomology $\R^qp_* \scrptO_Z(i)$ has the structure of $R-$ (actually $R/I_g-$) module. First, we will calculate this cohomology group as a graded $H$-representation. To do that, we want to calculate $\pi'_* \R^q p_* (\iota_* \scrptO_Z(i))$. But from the commutativity of lower square in Figure~\ref{fig:setup} we can go in counter-clockwise direction, i.e. we can calculate $\R^q p'_* (\pi_* (\iota_*\scrptO_Z(i)))$ instead. We already know that $\pi_* (\iota_*\scrptO_Z(i)) \niso  \Sym (\scrptT(-1) \tnsr p'^*V)(i)$. Also, the Cauchy formula (see Section 6 of \cite{Fult-Harr}) gives us the isomorphism:

\[
\Sym_d (\scrptT(-1) \tnsr p'^*V) \niso \underset{|\lambda|=d}{\Drsum} \bbS_{\lambda} (\scrptT(-1)) \tnsr p'^* (\bbS_{\lambda} V)
\]

Using the projection formula, we see that in general the structure of $\R^q p_* (\iota_* \scrptO_Z(i))$ as a graded $H$-representation is described by the following equation:

\begin{equation}
\label{eq:vect}
\pi'_* \R^qp_* (\iota_*\scrptO_Z(i)) \niso \underset{\lambda}{\Drsum} (\R^q p'_* [\bbS_{\lambda} (\scrptT(-1))(i)]  \tnsr \bbS_{\lambda}V)
\end{equation}

The term $R^q p'_* [\bbS_{\lambda} (\scrptT(-1))(i)]$ was calculated in the Example~\ref{ex:coh}. In particular,

\begin{proposition}
\label{prop:coh1}
The cohomology $\R^qp_* \scrptO_Z(i)$ is non-zero if and only if 

\[
0 \leq q \leq min(-i-1,g-1)
\]

Moreover, $\R^qp_* \scrptO_Z(i)$ is finite-dimensional over $\Bbbk$ if and only if $q=g-1$ and $\R^{g-1}p_* \scrptO_Z(-g) \iso \Bbbk$.
\end{proposition}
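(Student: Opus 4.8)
The whole statement should fall out of the decomposition~\ref{eq:vect} combined with the explicit Borel--Bott--Weil computation of Example~\ref{ex:coh}. By Theorem~\ref{thm:main} it suffices to understand $\R^q p_* \scrptO_Z(i)$, and by~\ref{eq:vect} this group, read as a graded $\Bbbk$-vector space, is
\[
\underset{\lambda}{\Drsum}\ \R^q p'_*\bigl[\bbS_\lambda(\scrptT(-1))(i)\bigr]\tnsr \bbS_\lambda V,
\]
the sum running over partitions $\lambda=(\lambda_1,\dots,\lambda_{g-1})$ with at most $g-1$ parts. Since $f\geq g$ every $\bbS_\lambda V$ is nonzero and finite-dimensional, and by Example~\ref{ex:coh} each summand $\R^q p'_*[\bbS_\lambda(\scrptT(-1))(i)]$ is a finite-dimensional $\GL(W^*)$-representation which is nonzero precisely when $\lambda$ satisfies the inequalities~\ref{eq:lambda}, i.e. $\lambda_{g-q}<-i-q\leq\lambda_{g-q-1}$ (with the boundary conventions $\lambda_0=+\infty$, $\lambda_g=0$ forced by that computation at the extreme values of $q$). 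So $\R^q p_* \scrptO_Z(i)$ is nonzero if and only if some partition with at most $g-1$ parts satisfies~\ref{eq:lambda}, and it is finite-dimensional over $\Bbbk$ if and only if only finitely many such partitions exist; the proof reduces to these two purely combinatorial questions.

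First I would pin down the nonvanishing range. For $q=0$ the condition reads $0<-i\leq\lambda_{g-1}$, which is solvable (take all parts equal to $-i$) exactly because $i<0$, so $\R^0 p_* \scrptO_Z(i)\neq 0$ always. For $q\geq 1$, since $\lambda_{g-q}\geq 0$ the strict inequality $\lambda_{g-q}<-i-q$ forces $-i-q\geq 1$, hence $q\leq -i-1$; one also needs $q\leq g-1$ for the index $g-q$ to be that of an honest part. Conversely, whenever $1\leq q\leq\min(-i-1,g-1)$ one writes down a partition with its last $g-q$ entries equal to $0$ and the earlier ones equal to $-i-q$ (the inequality involving $\lambda_{g-q-1}$ being vacuous when $q=g-1$), and this satisfies~\ref{eq:lambda}. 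This yields exactly $0\leq q\leq\min(-i-1,g-1)$.

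Next I would analyze the size of the index set, which controls finiteness. When $q=g-1$ the surviving constraint is just $\lambda_1<-i-(g-1)$, i.e. $\lambda_1\leq -i-g$; this bounds every part of $\lambda$, so only finitely many partitions contribute and $\R^{g-1}p_* \scrptO_Z(i)$ is finite-dimensional over $\Bbbk$ (and, consistently with the previous paragraph, it is nonzero iff $i\leq -g$). When $0\leq q\leq g-2$ lies in the nonvanishing range, the inequality $-i-q\leq\lambda_{g-q-1}$ imposes no upper bound on $\lambda_1,\dots,\lambda_{g-q-1}$, so infinitely many partitions contribute; as each summand $\R^q p'_*[\bbS_\lambda(\scrptT(-1))(i)]\tnsr \bbS_\lambda V$ is nonzero, the direct sum has infinite $\Bbbk$-dimension. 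Thus among the nonzero cohomology groups the finite-dimensional ones are precisely those with $q=g-1$. Finally, for $i=-g$ and $q=g-1$ the bound becomes $\lambda_1\leq 0$, so $(0,\dots,0)$ is the only contributing partition; feeding it into the formula of Example~\ref{ex:coh} (equivalently, the warm-up computation giving $\R^{g-1}p'_*\scrptO(-g)\iso\ExtAlg^g W^*$) produces $\ExtAlg^g W^*\tnsr\bbS_{(0,\dots,0)}V$, which is one-dimensional, so $\R^{g-1}p_* \scrptO_Z(-g)\iso\Bbbk$.

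I do not expect a genuine obstacle here: the substantive work, namely the Borel--Bott--Weil bookkeeping that produces the inequalities~\ref{eq:lambda}, is already done in Example~\ref{ex:coh}, and what remains is elementary counting. The two points requiring care are (i) handling the boundary indices $\lambda_0$ and $\lambda_g$ in~\ref{eq:lambda} correctly at $q=0$ and $q=g-1$, so that the nonvanishing range and the $q=g-1$ bound come out exactly right; and (ii) phrasing the finite-dimensionality assertion relative to the nonvanishing range, since the zero module is trivially finite-dimensional and the real content is that the only finite-dimensional \emph{nonzero} cohomology sits in degree $q=g-1$.
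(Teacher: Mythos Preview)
Your approach is exactly the one the paper intends: the proposition is stated immediately after~\eqref{eq:vect} as a direct consequence of that decomposition together with the Borel--Bott--Weil inequalities~\eqref{eq:lambda} from Example~\ref{ex:coh}, and your case analysis on $q$ and on the size of the index set carries this out correctly. One small slip: in the construction of a witnessing partition for $1\leq q\leq\min(-i-1,g-1)$ you want the last $q$ (not $g-q$) entries of $(\lambda_1,\dots,\lambda_{g-1})$ to vanish, so that $\lambda_{g-q}=0<-i-q\leq\lambda_{g-q-1}=-i-q$.
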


\begin{example}
\label{example:1}

Consider the case when $\dim W^* = 2$, so our vector bundle is over $\bbP(W^*)=\bbP^1$. In this case $\scrptT(-1) \niso \scrptO_{\bbP^1}(1)$ is of rank $1$ and the Cauchy formula only gives one term in each degree:

\begin{eqnarray*}
(\Sym_d(\scrptO_{\bbP^1}(1) \tnsr p'^*V))(i) \niso \Sym_d (\scrptO_{\bbP^1}(1))(i) \tnsr p'^* \Sym_d V \niso \scrptO_{\bbP^1}(d+i) \tnsr p'^* \Sym_d V
\end{eqnarray*}

Now using the formula for the cohomology of line bundles on projective space we get:

\begin{eqnarray*}
\pi'_* \R^qp_* \scrptO_Z(i) \niso \underset{d \geq 0}{\Drsum} \R^qp'_* (\scrptO_{\bbP^1}(d+i) \tnsr p'^* \Sym_d V) \niso
 \\
\begin{cases}
\underset{d \geq 0}{\Drsum} \Sym_{d+i} W \tnsr \Sym_d V \text{  if  $q=0$}\\
\underset{d \geq 0}{\Drsum} \Sym^*_{-(d+i)-2} W \tnsr \ExtAlg^2 W^* \tnsr \Sym_d V \text{  if $q=1$}\\
\end{cases}
\end{eqnarray*}

\end{example}

\section{Category of $H$-Equivariant Modules} 
\label{sec:Equiv}

Recall that $H:=\GL(V) \times \GL(W^*)$, $R:=\Sym (V \tnsr W^*)$. Then $H$ acts on $R$ by ring automorphisms in the usual way. Let $A:=R \rtimes H$ denote the twisted group algebra. The multiplication is given by the rule $(r_1, g_1)(r_2, g_2)=(r_1g_1(r_2), g_1g_2)$. The grading on $A$ comes from the usual grading of $R$.

Let $N \in A-\fmod$ be a graded left $A$-module all of whose components are finite-dimensional vector spaces, then it is also a graded $R$-module via the inclusion $R \into A$ and a representation of $H$, moreover the multiplication map $\mu \colon R \tnsr N \to N$ is a map of representations. Conversely, a representation $N$ of $H$ with graded $R-$module structure such that the multiplication map $R \tnsr N \to M$ is a map of representations defines a graded module over the twisted group algebra.

\begin{definition}
We will call such a finitely generated module over $A$ an equivariant module.
\end{definition}

\begin{example}
\label{ex:multR}

Consider $R=\Sym(V \tnsr W^*) \niso \underset{\lambda}{\Drsum}( \bbS_{\lambda} V \tnsr \bbS_{\lambda} W^*)$. The module structure is uniquely determined by the action on $V \tnsr W^*$, i.e by the maps $(V\tnsr W^*) \tnsr (\bbS_{\lambda} V \tnsr \bbS_{\lambda} W^*) \to R$. Clearly this action is a map of representations. Pieri rules say

\[
(V \tnsr W^*) \tnsr (\bbS_{\lambda} V \tnsr \bbS_{\lambda} W^*) \niso \underset{\mu, \eta \in VS(\lambda,1)}{\Drsum} \bbS_{\mu} V \tnsr \bbS_{\eta} W^*
\]

but $R$ does not have any copies of $\bbS_{\mu} V \tnsr \bbS_{\eta} W^*$ unless $\mu=\eta$. Moreover, when there is a possibility for this map to be nonzero on some component (i.e on the component with $\mu=\eta$) it has to be nonzero (image has to be the ideal of elements of positive degree). Thus, the result of applying the action of $(V\tnsr W^*)$ to $\bbS_{\lambda} V \tnsr \bbS_{\lambda} W^*$ is $\underset{\mu \in VS(\lambda,1)}{\Drsum} \bbS_{\mu} V \tnsr \bbS_{\mu} W^*$. 
\end{example}

We will display the structure of an equivariant module $N$ by drawing a lattice. The vertices of the lattice are the representations that occur in $N$. An arrow going from a representation $\bbS_{\alpha}V \tnsr \bbS_{\beta} W^*$ to a representation $\bbS_{\mu}V \tnsr \bbS_{\eta} W^*$ means that $\bbS_{\mu}V \tnsr \bbS_{\eta} W^*$ lies in the image of the map $(V \tnsr W^*) \tnsr \bbS_{\alpha}V \tnsr \bbS_{\beta} W^* \to (V \tnsr W^*) \tnsr N \to N$. where the first map is the inclusion $\bbS_{\alpha}V \tnsr \bbS_{\beta} W^* \to \tnsr N$ tensored with $V \tnsr W^*$ and the second map is the multiplication in $N$.

\begin{example}

Consider the equivariant module $R$. It's lattice is the following:

\begin{figure}[h!]

\begin{tikzpicture}[scale=1.4]

\node at (0,0) {$\emptyset$};
\node at (0,1) {$\scriptsize\young(\hfil) \tnsr \scriptsize\young(\hfil)$};
\node at (1,2) {$\scriptsize\young(\hfil,\hfil) \tnsr \scriptsize\young(\hfil,\hfil)$};
\node at (-1,2) {$\scriptsize\young(\hfil \hfil) \tnsr \scriptsize\young(\hfil \hfil)$};
\node at (-2,3) {$\scriptsize\young(\hfil \hfil \hfil) \tnsr \scriptsize\young(\hfil \hfil \hfil)$};
\node at (0,3) {$\scriptsize\young(\hfil \hfil,\hfil) \tnsr \scriptsize\young(\hfil \hfil,\hfil)$};
\node at (2,3) {$\scriptsize\young(\hfil,\hfil,\hfil) \tnsr \scriptsize\young(\hfil,\hfil,\hfil)$};
\node at (-2,4) {$\vdots$};
\node at (0,4) {$\vdots$};
\node at (2,4) {$\vdots$};

\path[->,font=\scriptsize] 

(0,0) edge[shorten <= .5cm, shorten >= .5cm] (0,1)
(0,1) edge[shorten <= .5cm, shorten >= .5cm] (-1,2) 
(0,1) edge[shorten <= .5cm, shorten >= .5cm] (1,2) 
(-1,2) edge[shorten <= .5cm, shorten >= .5cm] (-2,3) 
(-1,2) edge[shorten <= .5cm, shorten >= .5cm] (0,3) 
(1,2) edge[shorten <= .5cm, shorten >= .5cm] (0,3) 
(1,2) edge[shorten <= .5cm, shorten >= .5cm] (2,3) 
(-2,3) edge[shorten <= .5cm, shorten >= .5cm] (-2,4) 
(0,3) edge[shorten <= .5cm, shorten >= .5cm] (0,4) 
(2,3) edge[shorten <= .5cm, shorten >= .5cm] (2,4); 

\end{tikzpicture}

\caption{Lattice of $R$ as an $H$-equivariant module}

\end{figure}
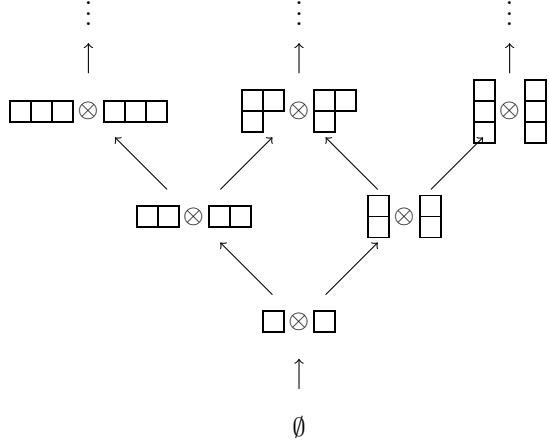
\FloatBarrier

Here the representation on the left of the tensor product sign is applied to $V$ and the representation of the right of the tensor product sign is applied to $W^*$. For example, $\scriptsize\young(\hfil \hfil,\hfil) \tnsr \scriptsize\young(\hfil \hfil,\hfil)$ means $\bbS_{(2,1,0 \cdots)} V \tnsr \bbS_{(2,1,0, \cdots)} W^*$.
 
\end{example}

\begin{example}
\label{ex:ideal}

For any partition $\lambda$ we have a unique $H$-equivariant ideal $I_{\lambda}$ in $R$ generated by $\bbS_{\lambda}V \tnsr \bbS_{\lambda} W^*$. The ideal $I_{\lambda}$ decomposes into representations: $I_{\lambda} \niso \underset{\mu \supseteq \lambda}{\Drsum} \bbS_{\mu} V \tnsr \bbS_{\mu} W^*$. The structure of equivariant ideals of $R$ was also described in \cite{de-Conc-Eis-Proc}. For example, if $\lambda=(2,2,0, \cdots)$, then the lattice of $I_{\lambda}$ looks as follows:

\begin{figure}[h!]

\begin{tikzpicture}[scale=1.4]

\node at (0,0) {$\scriptsize\young(\hfil \hfil,\hfil \hfil) \tnsr \scriptsize\young(\hfil \hfil,\hfil \hfil)$};
\node at (-1,1) {$\scriptsize\young(\hfil \hfil \hfil,\hfil \hfil) \tnsr \scriptsize\young(\hfil \hfil \hfil,\hfil \hfil)$};
\node at (1,1) {$\scriptsize\young(\hfil \hfil,\hfil \hfil,\hfil) \tnsr \scriptsize\young(\hfil \hfil,\hfil \hfil,\hfil)$};
\node at (-4,2) {$\scriptsize\young(\hfil \hfil \hfil \hfil,\hfil \hfil) \tnsr \scriptsize\young(\hfil \hfil \hfil \hfil,\hfil \hfil)$};
\node at (-2,2) {$\scriptsize\young(\hfil \hfil \hfil,\hfil \hfil \hfil) \tnsr \scriptsize\young(\hfil \hfil \hfil,\hfil \hfil \hfil)$};
\node at (0,2) {$\scriptsize\young(\hfil \hfil \hfil,\hfil \hfil,\hfil) \tnsr \scriptsize\young(\hfil \hfil \hfil,\hfil \hfil,\hfil)$};
\node at (2,2) {$\scriptsize\young(\hfil \hfil,\hfil \hfil,\hfil \hfil) \tnsr \scriptsize\young(\hfil \hfil,\hfil \hfil,\hfil \hfil)$};
\node at (4,2) {$\scriptsize\young(\hfil \hfil,\hfil \hfil,\hfil,\hfil) \tnsr \scriptsize\young(\hfil \hfil,\hfil \hfil,\hfil,\hfil)$};

\node at (-4,3) {$\vdots$};
\node at (-2,3) {$\vdots$};
\node at (0,3) {$\vdots$};
\node at (2,3) {$\vdots$};
\node at (4,3) {$\vdots$};

\path[->,font=\scriptsize] 

(0,0) edge[shorten <= .5cm, shorten >= .5cm] (-1,1)
(0,0) edge[shorten <= .5cm, shorten >= .5cm] (1,1) 
(-1,1) edge[shorten <= .5cm, shorten >= .5cm] (-4,2) 
(-1,1) edge[shorten <= .5cm, shorten >= .5cm] (-2,2) 
(-1,1) edge[shorten <= .5cm, shorten >= .5cm] (0,2) 
(1,1) edge[shorten <= .5cm, shorten >= .5cm] (0,2) 
(1,1) edge[shorten <= .5cm, shorten >= .5cm] (2,2) 
(1,1) edge[shorten <= .5cm, shorten >= .5cm] (4,2) 
(-4,2) edge[shorten <= .5cm, shorten >= .5cm] (-4,3) 
(-2,2) edge[shorten <= .5cm, shorten >= .5cm] (-2,3) 
(0,2) edge[shorten <= .5cm, shorten >= .5cm] (0,3) 
(2,2) edge[shorten <= .5cm, shorten >= .5cm] (2,3) 
(4,2) edge[shorten <= .5cm, shorten >= .5cm] (4,3); 

\end{tikzpicture}

\caption{Lattice of $I_{(2,2,0,\cdots)}$ as an $H$-equivariant module}

\end{figure}
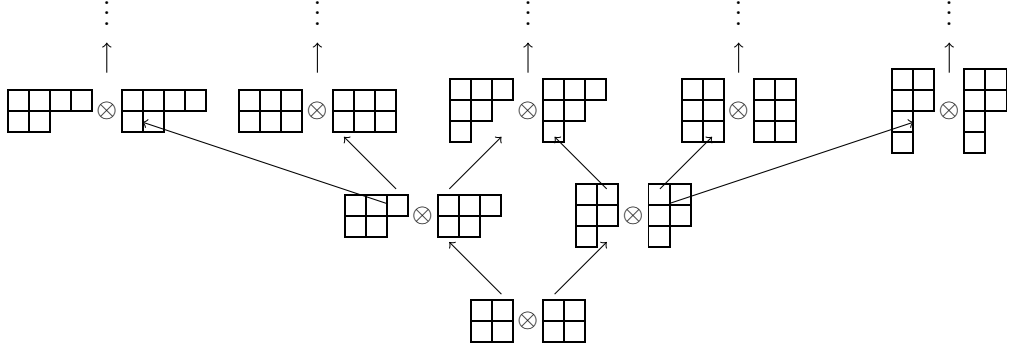
\FloatBarrier

\end{example}

As another example, consider the following question: given some representation $\bbS_{\mu}V \tnsr \bbS_{\eta}W^* \in R \tnsr (\ExtAlg^k V \tnsr \Sym_l W^*)$ what does this representation generate in the next degree? We will call $\bbS_{\mu}V$ the $\GL(V)$-part and $\bbS_{\eta}W^*$ the $\GL(W^*)$-part of $\bbS_{\mu}V \tnsr \bbS_{\eta}W^* \tnsr \ExtAlg^g W^*$ respectively. 

Each representation $\bbS_{\mu}V \tnsr \bbS_{\eta}W^*$ in $R \tnsr (\ExtAlg^k V \tnsr \Sym_l W^*)$ is formed by first choosing some partition $\tau$ and then adding extra $k$ boxes according to the Pieri rule for exterior powers to the diagram of $\tau$ to form the $\GL(V)$-part. We will mark these added boxes with "$\ExtAlg$'s. The $\GL(W^*)$-part is formed by adding extra $l$ boxes to the diagram of $\tau$ according to the Pieri rule for the symmetric powers. We will mark these added boxes with "$S$"'s. Such labeled diagram of $\bbS_{\mu}V \tnsr \bbS_{\eta}W^*$ records the embedding of the representation into $\bbS_{\tau}V \tnsr \bbS_{\tau} W^* \tnsr (\ExtAlg^k V \tnsr \Sym_l W^*)$.

Given such labeled diagram $\bbS_{\mu}V \tnsr \bbS_{\eta}W^*$ we want to know the image of the map

\[
(V \tnsr W^*) \tnsr (\bbS_{\mu}V \tnsr \bbS_{\eta}W^*) \to (V \tnsr W^*) \tnsr (\bbS_{\tau}V \tnsr \bbS_{\tau} W^*) \tnsr (\ExtAlg^k V \tnsr \Sym_l W^*) \to R \tnsr (\ExtAlg^k V \tnsr \Sym_l W^*)
\]

where the first map is the embedding according to the given labeling tensored with $V \tnsr W^*$ and the second map is the multiplication in $R$. All the representations $\bbS_{\alpha}V \tnsr \bbS_{\beta} W^*$ in $(V \tnsr W^*) \tnsr (\bbS_{\mu}V \tnsr \bbS_{\eta}W^*)$ are formed by adding one more box to the $\GL(V)$-part which we will label with "$V$" and to $\GL(W^*)$ which we will label with "$W$". Thus for each irreducible summand $\bbS_{\alpha}V \tnsr \bbS_{\beta} W^*$ of $(V \tnsr W^*) \tnsr (\bbS_{\mu}V \tnsr \bbS_{\eta}W^*)$, its embedding into $(V \tnsr W^*) \tnsr \bbS_{\tau}V \tnsr \bbS_{\tau} W^* \tnsr (\ExtAlg^k V \tnsr \Sym_l W^*)$ is given to us by taking the embedding $\bbS_{\alpha} V \to V \tnsr (\bbS_{\tau}V \tnsr \ExtAlg^k V)$ which is given by the labeled diagram of $\alpha$ and tensoring it with the embedding $\bbS_{\beta} W^* \to W^* \tnsr (\bbS_{\tau}W^* \tnsr \Sym_l W^*)$  given by labeled diagram of $\beta$. In order to perform the multiplication in $R$ we need to know the labeled diagrams of $\alpha$ and $\beta$ according to the bracket placements $(V \tnsr \bbS_{\tau}V) \tnsr \ExtAlg^k V$ and $(W^* \tnsr \bbS_{\tau} W^*) \tnsr \Sym_l W^*)$, i.e. we need to "push "$V$" and "$W$" inside as in subsection~\ref{subsec:Emb}. Then the image of $\bbS_{\alpha}V \tnsr \bbS_{\tau} W^*$ will be $\bbS_{\alpha}V \tnsr \bbS_{\tau} W^*$ if "$V$" and "$W$" end up attached to the same places to the diagram of $\tau$ and zero otherwise (see Example~\ref{ex:multR}).

\begin{example}
Let $\tau=\scriptsize\young(\hfil \hfil,\hfil \hfil,\hfil \hfil)$. Let $\bbS_{\mu} V \tnsr \bbS_{\eta} W^*=\scriptsize\young(\hfil \hfil \ExtAlg,\hfil \hfil \ExtAlg,\hfil \hfil) \tnsr \scriptsize\young(\hfil \hfil,\hfil \hfil,\hfil \hfil,SS)$. All the summands of $(V \tnsr W^*) \tnsr \bbS_{\mu} V \tnsr \bbS_{\eta} W^*$ are:

$\scriptsize\young(\hfil \hfil \ExtAlg V,\hfil \hfil \ExtAlg,\hfil \hfil) \tnsr \scriptsize\young(\hfil \hfil W,\hfil \hfil,\hfil \hfil,SS)$, $\scriptsize\young(\hfil \hfil \ExtAlg V,\hfil \hfil \ExtAlg,\hfil \hfil) \tnsr \scriptsize\young(\hfil \hfil,\hfil \hfil,\hfil \hfil,SS,W)$, \\

$\scriptsize\young(\hfil \hfil \ExtAlg,\hfil \hfil \ExtAlg,\hfil \hfil V) \tnsr \scriptsize\young(\hfil \hfil W,\hfil \hfil,\hfil \hfil,SS)$, $\scriptsize\young(\hfil \hfil \ExtAlg,\hfil \hfil \ExtAlg,\hfil \hfil V) \tnsr\scriptsize\young(\hfil \hfil,\hfil \hfil,\hfil \hfil,SS,W)$,\\

$\scriptsize\young(\hfil \hfil \ExtAlg,\hfil \hfil \ExtAlg,\hfil \hfil,V) \tnsr \scriptsize\young(\hfil \hfil W,\hfil \hfil,\hfil \hfil,SS)$, $\scriptsize\young(\hfil \hfil \ExtAlg,\hfil \hfil \ExtAlg,\hfil \hfil,V) \tnsr \scriptsize\young(\hfil \hfil,\hfil \hfil,\hfil \hfil,SS,W)$.\\
 
Now, as in subsection~\ref{subsec:Emb}, we "push "$V$" and "$W$" inside":

$\scriptsize\young(\hfil \hfil V\ExtAlg,\hfil \hfil \ExtAlg,\hfil \hfil) \tnsr \scriptsize\young(\hfil \hfil W,\hfil \hfil,\hfil \hfil,SS)$, $\scriptsize\young(\hfil \hfil V\ExtAlg,\hfil \hfil \ExtAlg,\hfil \hfil) \tnsr \scriptsize\young(\hfil \hfil,\hfil \hfil,\hfil \hfil,WS,S)$,\\

$\scriptsize\young(\hfil \hfil V,\hfil \hfil \ExtAlg,\hfil \hfil \ExtAlg) \tnsr \scriptsize\young(\hfil \hfil W,\hfil \hfil,\hfil \hfil,SS)$, $\scriptsize\young(\hfil \hfil V,\hfil \hfil \ExtAlg,\hfil \hfil \ExtAlg) \tnsr\scriptsize\young(\hfil \hfil,\hfil \hfil,\hfil \hfil,WS,S)$,\\

$\scriptsize\young(\hfil \hfil \ExtAlg,\hfil \hfil \ExtAlg,\hfil \hfil,V) \tnsr \scriptsize\young(\hfil \hfil W,\hfil \hfil,\hfil \hfil,SS)$, $\scriptsize\young(\hfil \hfil \ExtAlg,\hfil \hfil \ExtAlg,\hfil \hfil,V) \tnsr \scriptsize\young(\hfil \hfil,\hfil \hfil,\hfil \hfil,WS,S)$.\\

The multiplication map in $R$ only selects the following:
$\scriptsize\young(\hfil \hfil V\ExtAlg,\hfil \hfil \ExtAlg,\hfil \hfil) \tnsr \scriptsize\young(\hfil \hfil W,\hfil \hfil,\hfil \hfil,SS)$, $\scriptsize\young(\hfil \hfil V,\hfil \hfil \ExtAlg,\hfil \hfil \ExtAlg) \tnsr \scriptsize\young(\hfil \hfil W,\hfil \hfil,\hfil \hfil,SS)$ and $\scriptsize\young(\hfil \hfil \ExtAlg,\hfil \hfil \ExtAlg,\hfil \hfil,V) \tnsr \scriptsize\young(\hfil \hfil,\hfil \hfil,\hfil \hfil,WS,S)$. Thus the representation $\scriptsize\young(\hfil \hfil \ExtAlg,\hfil \hfil \ExtAlg,\hfil \hfil) \tnsr \scriptsize\young(\hfil \hfil,\hfil \hfil,\hfil \hfil,SS)$ generates the representations $\scriptsize\young(\hfil \hfil \hfil \ExtAlg,\hfil \hfil \ExtAlg,\hfil \hfil) \tnsr \scriptsize\young(\hfil \hfil \hfil,\hfil \hfil,\hfil \hfil,SS)$, $\scriptsize\young(\hfil \hfil \hfil,\hfil \hfil \ExtAlg,\hfil \hfil \ExtAlg) \tnsr \scriptsize\young(\hfil \hfil \hfil,\hfil \hfil,\hfil \hfil,SS)$ and $\scriptsize\young(\hfil \hfil \ExtAlg,\hfil \hfil \ExtAlg,\hfil \hfil,\hfil) \tnsr \scriptsize\young(\hfil \hfil,\hfil \hfil,\hfil \hfil,\hfil S,S)$ in the next degree. Here symbols "$\ExtAlg$"'s and "$S$"'s tell us the embeddings of these representations into $R \tnsr (\ExtAlg^k V \tnsr \Sym_l W^*)$. So now one can \emph{in principle} continue to describe the entire submodule generated by the given representation.

\end{example}

\begin{remark}
\label{rem}
Note that when we "push the "$V$" inside" on the $\GL(V)$-part of the representation all the "$\ExtAlg$"'s remain in the same rows unless "$V$" is adjoined directly below "$\ExtAlg$"'s and there is no "$\ExtAlg$" to the left of the "$V$". For example, for the labeled diagram $\scriptsize\young(\hfil \hfil \ExtAlg,\hfil \hfil \ExtAlg,\hfil \hfil V)$ one has to "slide up" the "$V$" to get $\scriptsize\young(\hfil \hfil V,\hfil \hfil \ExtAlg,\hfil \hfil \ExtAlg)$, so "$\ExtAlg$"'s end up in different rows.  
\end{remark}

\section{Structure of Cohomology of $D^{\bullet}(i)$ as a graded $H$-Equivariant Module}
\label{sec:EquivStr}

Fix $i,q,f$ and $g$. In this section we will use the abbreviations $k:=-q+g-1$ and $l:=-i-q-1$. We will often ignore tensoring by $\ExtAlg^gW^*$, because it does not really matter. In this section we investigate the structure of cohomology of complexes $D^{\bullet}(i)$ as an equivariant $H$-module.  

As we saw in Section~\ref{sec:StrRep} the cohomology $\R^qp_* \scrptO_Z(i)$ as a graded $H$-representation is 

\begin{equation}
\label{eq:StrRep}
\R^qp_* \scrptO_Z(i) \niso \underset{\lambda \in A(k,l)}{\Drsum} \bbS_{\lambda}V \tnsr \bbS_{\delta(\lambda)} W^*
\end{equation}

where $A(k,l)$ is the set of all partitions $\lambda=(\lambda_1, \cdots \lambda_{g-1})$ satisfying the conditions $\lambda_{k+1}<l+1 \leq \lambda_k$ and $\delta(\lambda)$ is as in Example~\ref{ex:coh}.

Recall that $\R^qp_* \scrptO_Z(i) \niso H^{q-g+1}D^{\bullet}(i)=H^{-k}D^{\bullet}(i)$, so it has to be a subquotient of $\ExtAlg^k F \tnsr \Sym^*_l G \tnsr \ExtAlg^g G^*$. We have the following decomposition:

\begin{equation}
\begin{aligned}
\label{eq:decomp}
\ExtAlg^k F \tnsr \Sym^*_l G \tnsr \ExtAlg^g G^* \niso R \tnsr (\ExtAlg^k V \tnsr \Sym^*_l W \tnsr \ExtAlg^g W^*) \niso \\
\underset{\tau}{\Drsum} \underset{\substack{\mu \in VS(\tau, k), \\ \eta \in HS(\tau, l)}}{\Drsum} \bbS_{\mu}V \tnsr \bbS_{\eta} W^* \tnsr \ExtAlg^g W^* 
\end{aligned}
\end{equation}

Let $N \subset R \tnsr (\ExtAlg^k V \tnsr \Sym_l W^*)$ denote the submodule of $R \tnsr (\ExtAlg^k V \tnsr \Sym_l W^*)$ generated by all the representations $\bbS_{\mu} V \tnsr \bbS_{\eta} W^*$ such that the length of $(k+1)$-st row of $\mu$ is greater or equal to $l+1$. Then clearly $\R^qp_* \scrptO_Z(i)$ has to be a subquotient of $R \tnsr (\ExtAlg^k V \tnsr \Sym_l W^*)/N$.

Consider the representation

\[
\rho:=\bbS_{(l+1)^k}V \tnsr \bbS_{(l)^{k+1}}W^*
\]

which occurs only once in $R \tnsr (\ExtAlg^k V \tnsr \Sym_l W^*)$. Namely, when in the double sum~\ref{eq:decomp} we take $\tau=(l)^k$, $\mu$ is obtained from $\tau$ by adding all $k$ boxes to the $(l+1)$-st column and $\eta$ is obtained by adding all $l$ boxes to the $(k+1)$-st row. This summand sits in 

\[
(\bbS_{(l)^k}V \tnsr \bbS_{(l)^k} W^*) \tnsr (\ExtAlg^k V \tnsr \Sym_l W^* \tnsr \ExtAlg^g W^*)
\]

For example, when $k=l=2$, $\rho=\scriptsize\young(\hfil \hfil \ExtAlg,\hfil \hfil \ExtAlg) \tnsr \scriptsize\young(\hfil \hfil,\hfil \hfil,SS)$. Let us consider the submodule $<\rho>$ generated by $\rho$ in $R \tnsr (\ExtAlg^k V \tnsr \Sym_l W^*)$. 
For concriteness (however we will not use this) let us display the generators of $<\rho>$. The image of $\rho$ in $D^{-k}(i)=R \tnsr (\ExtAlg^k V \tnsr \Sym^*_l W \tnsr \ExtAlg^g W^*)$ is the image of the following (injective) composition involving the Pieri inclusions (see Section~\ref{subsec:Pieri}) and the Cauchy formula (see Section~\ref{subsec:RepH}):

\begin{eqnarray*}
\bbS_{(l+1)^k} V \tnsr \bbS_{(l)^{k+1}}W^* \into \\
\ExtAlg^{(k)^{l+1}} V \tnsr \ExtAlg^{(k+1)^{l}} W^* \to \ExtAlg^k V \tnsr \ExtAlg^{(k)^{l}}V \tnsr (\ExtAlg^k W^* \tnsr W^*)^{\tnsr l} \to \\
(\ExtAlg^k V \tnsr \ExtAlg^k W^*)^{\tnsr l} \tnsr \ExtAlg^k V \tnsr (W^*)^{\tnsr l} \to \\ 
(\Sym_k (V \tnsr W^*))^{\tnsr l} \tnsr \ExtAlg^k V \tnsr \Sym_l W^* \to \\
\Sym_{(kl)}(V \tnsr W^*) \tnsr \ExtAlg^k V \tnsr \Sym_l W^*
\end{eqnarray*}

A basis for $\bbS_{(l+1)^k} V \tnsr \bbS_{(l)^{k+1}}W^*$ consists of the ordered pairs of standard tableaux $(T,S)$ (one for $V$, one for $W^*$) of shapes $(l+1)^k$ and $(l)^{k+1}$ respectively. The image of the basis element corresponding to such a pair of tableaux in $\ExtAlg^{(k)^{l+1}} V \tnsr \ExtAlg^{(k+1)^{l}} W^*$ is 
\[
(e_{T(1,1)} \wdg \cdots e_{T(k,1)}) \tnsr \cdots (e_{T(1,l+1)} \wdg \cdots e_{T(k,l+1)}) \tnsr (x^*_{S(1,1)} \wdg \cdots x^*_{S(k+1,1))}) \tnsr \cdots (x^*_{S(1,l)} \wdg \cdots x^*_{S(k+1,l)})
\]

This element under the composition above goes to

\begin{eqnarray*}
\underset{s_1, \cdots s_l=1}{\overset{k+1}{\sum}} \prod^l_{j=1} (-1)^{s_1 + \cdots s_l}\det 
\begin{bmatrix}
    e_{T(1,j)} \tnsr x^*_{S(1,j)} & \cdots & \widehat{e_{T(1,j)} \tnsr x^*_{S(s_j,j)}} & \cdots & e_{T(1,j)} \tnsr x^*_{S(k+1,j)}\\

    &  & \vdots &  & \\
   e_{T(k,j)} \tnsr x^*_{S(1,j)} & \cdots & \widehat{e_{T(k,j)} \tnsr x^*_{S(s_j,j)}} & \cdots & e_{T(k,j)} \tnsr x^*_{S(k+1,j)}\\
  \end{bmatrix} \tnsr \\
  (e_{T(1,l+1)} \wdg \cdots e_{T(k,l+1)}) \tnsr (x^*_{S(s_1,1)} \cdots x^*_{S(s_l,l)})  
\end{eqnarray*}

So the image of $\rho$ is has a basis consisting of the above expressions for each choice of the basis element $(T,S)$.

\begin{lemma}
\label{lem:equiv}
In the notation above $\R^qp_* \scrptO_Z(i) \niso H^{-k}D^{\bullet}(i) \iso <\rho>/<\rho> \cap N$ as $H$-equivariant modules.
\end{lemma}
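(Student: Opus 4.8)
The plan is to realize $H^{-k}D^{\bullet}(i)$ as a subquotient of $M:=D^{-k}(i)$ and then compare it, one representation at a time, with the submodule generated by $\rho$. First I record that the differentials of $D^{\bullet}(i)$ are $R$-linear (they are obtained by applying $\R^{\bullet}p_*$ to $\scrptO_{\bbP}$-linear maps and, when splicing is present, to the $R$-linear map $\theta$), so that $H^{-k}D^{\bullet}(i)=Z/B$ with $Z:=\ker(d^{-k})$ and $B:=\im(d^{-k-1})$ honest $R$-submodules of $M$; in particular $Z/B$ is a graded $H$-equivariant $R$-module, and by Theorem~\ref{thm:main}(2) together with~(\ref{eq:StrRep}) its content as an $H$-representation is $\mathcal{C}:=\bigoplus_{\lambda\in A(k,l)}\bbS_{\lambda}V\tnsr\bbS_{\delta(\lambda)}W^{*}$, which is multiplicity free because distinct $\lambda$ give non-isomorphic $\GL(V)$-parts. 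Call an irreducible summand $\bbS_{\mu}V\tnsr\bbS_{\eta}W^{*}$ of the decomposition~(\ref{eq:decomp}) of $M$ of \emph{type B} if the $(k+1)$-st row of $\mu$ has length $\geq l+1$, and of \emph{type A} otherwise.

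Now the formal skeleton, using only that $H$-representations are semisimple in characteristic $0$. Every irreducible occurring in $N$ is of type B: this holds for the generators of $N$ by definition, and multiplying by $V\tnsr W^{*}$ only enlarges a Young diagram by one box, so it cannot turn a type-B diagram into a type-A one; conversely, since $N$ is generated by \emph{all} the type-B summands of $M$, every copy in $M$ of a type-B irreducible lies in $N$. Consequently $\mathcal{C}$ is disjoint from the content of $N$ (each $\lambda\in A(k,l)$ has $\lambda_{k+1}\leq l$), so the subquotient $(Z\cap N+B)/B$ of $Z/B$ -- which is at the same time a subquotient of $N$ -- has empty content; thus $Z\cap N\subseteq B$. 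Next, $\rho$ is of type A and, as noted just before the statement, occurs with multiplicity one in $M$ (the only core $\tau$ with $(l+1)^{k}\in VS(\tau,k)$ and $(l)^{k+1}\in HS(\tau,l)$ is $\tau=(l)^{k}$); since $\rho$ occurs in $Z/B$ (it is the summand of $\mathcal{C}$ with $\lambda=(l+1)^{k}$), its unique copy in $M$ must lie in $Z$, and hence $\langle\rho\rangle\subseteq Z$. As $\langle\rho\rangle\cap N\subseteq Z\cap N\subseteq B$, there is a well-defined $R$-linear map $g\colon\langle\rho\rangle/(\langle\rho\rangle\cap N)\to Z/B$. Finally, $\langle\rho\rangle\cap N$ is exactly the sum of the type-B isotypic components of $\langle\rho\rangle$: it contains each of them (a type-B isotypic component of $\langle\rho\rangle$ lies inside the corresponding isotypic component of $M$, which lies in $N$), and being an $H$-stable submodule its content lies in that of $N$, hence is type B; therefore $\langle\rho\rangle/(\langle\rho\rangle\cap N)$ has content equal to the set of type-A representations occurring in $\langle\rho\rangle$.

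The substantive point is then a single representation-theoretic claim about $\langle\rho\rangle$: \emph{the type-A irreducibles occurring in $\langle\rho\rangle$ are exactly those of $\mathcal{C}$, each with multiplicity one}. Granting it, $\langle\rho\rangle/(\langle\rho\rangle\cap N)$ is multiplicity free with content $\mathcal{C}$, so it agrees with $Z/B$ in graded dimension degree by degree; and $g$ is injective, since any type-A irreducible of $\langle\rho\rangle$ lies in $\mathcal{C}$, hence occurs with multiplicity one in $M$ (an elementary combinatorial check of the same sort as the one for $\rho$), hence its unique copy -- the one sitting in $\langle\rho\rangle\subseteq Z$ -- cannot lie in $B$, for it survives in $Z/B$. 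An injective, degreewise-dimension-preserving map is an isomorphism, so $\langle\rho\rangle/(\langle\rho\rangle\cap N)\niso Z/B=H^{-k}D^{\bullet}(i)\niso\R^{q}p_*\scrptO_Z(i)$, the last isomorphism being Theorem~\ref{thm:main}(2).

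To prove the claim I would begin from the explicit image of $\rho$ in $D^{-k}(i)$ displayed just above the statement -- a labelled diagram with core $(l)^{k}$, the $k$ exterior boxes filling column $l+1$, and the $l$ symmetric boxes filling row $k+1$ -- and iterate the multiplication by $V\tnsr W^{*}$ as analysed in Section~\ref{sec:EquivStr}: push the new $V$- and $W$-boxes inside (Subsection~\ref{subsec:Emb} and Remark~\ref{rem}) and keep those outputs in which the two boxes are attached at the same place of the core, which then grows by one box. One then checks that at each stage the type-A outputs are precisely the labelled diagrams whose core $\tau$ satisfies $\tau_{k}\geq l\geq\tau_{k+1}$, whose exterior boxes form ``one box at the end of each of the first $k$ rows'', and whose symmetric boxes form ``one box at the bottom of each of the first $l$ columns'' -- that is, exactly the labelled diagrams recording the embeddings of the representations of $\mathcal{C}$ (for $\lambda\in A(k,l)$ the corresponding diagram has core obtained from $\lambda$ by deleting one box from each of its first $k$ rows), while every other output is of type B; and that each such $\mathcal{C}$-diagram is reached from $\rho$ along a chain of admissible single-box additions to the core, the inequalities $\lambda_{k+1}<l+1\leq\lambda_{k}$ defining $A(k,l)$ being exactly what guarantees, at each step, a box that is simultaneously addable on the $V$-side and on the $W^{*}$-side. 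Keeping the three pieces of data (core, exterior strip, symmetric strip) mutually compatible, verifying non-vanishing of the multiplication at each step, and checking that no spurious type-A representation is produced, is the main obstacle; everything else in the argument is formal.
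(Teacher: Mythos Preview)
Your proof is correct and follows essentially the same strategy as the paper's: both use multiplicity-one of $\rho$ in $D^{-k}(i)$ to get $\langle\rho\rangle\subseteq Z$, both use that the content $\mathcal{C}$ of $Z/B$ is disjoint from that of $N$ to get $\langle\rho\rangle\cap N\subseteq B$, and both reduce everything to the combinatorial claim that the type-A content of $\langle\rho\rangle$ equals $\mathcal{C}$, proved via the labelled-diagram multiplication analysis of Section~\ref{sec:Equiv} and Remark~\ref{rem}. Your write-up is in fact more careful than the paper's at the final step: the paper concludes directly from ``same graded $H$-representation'' to ``isomorphic as equivariant modules'', whereas you explicitly verify that the natural map $g$ is injective by observing that each $\bbS_{\lambda}V\tnsr\bbS_{\delta(\lambda)}W^*$ with $\lambda\in A(k,l)$ has multiplicity one in $M$ and hence its unique copy, lying in $\langle\rho\rangle\subseteq Z$, cannot also lie in $B$.
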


\begin{proof}

From the Remark~\ref{rem} and by induction on degree of $<\rho>/<\rho> \cap N$ we see that as graded $H$-representations we have $<\rho>/N \niso \R^qp_* \scrptO_Z(i)$. Moreover, for each representation which occurs in $<\rho>/N$ its embedding into $R \tnsr (\ExtAlg^k V \tnsr \Sym_l W^*)$ is given by the labeled diagram where $k$ "$\ExtAlg$"'s are added to the first $k$ rows in the $\GL(V)$-part and $l$ "$S$"'s are added to the first $l$ columns in the $\GL(W^*)$-part. Here the point is that in the quotient $<\rho>/<\rho> \cap N$ we never have to "slide up the "$V$"", so all the "$\ExtAlg$"'s remain in the same rows. Also the $\GL(W^*)$-part of each representation in $<\rho>/<\rho> \cap N$ is determined by it's $\GL(V)$-part exactly in the same way as $\delta(\lambda)$ is determined by $\lambda$ at the end of the Example~\ref{ex:coh}.

Let $Z$ and $B$ denote the modules of cycles and boundaries respectively in $D^{-k}(i)$. Then since $\rho \in H^{-k}D(i)$ and $\rho$ occurs only once in $D^{-k}(i)$ it must be that $<\rho> \subset Z$. Since $H^{-k}D(i)=Z/B$ does not contain any representations from $<\rho> \cap N$ it must be that $<\rho> \cap N \subset B$. Thus we have the following diagram:

\begin{figure}[h!]
\begin{tikzpicture}
\matrix(m) [matrix of math nodes, 
row sep=4.0em, column sep=4.0em, 
text height=1.5ex, text depth=0.25ex]
{<\rho>/<\rho> \cap N & H^{-k}D^{\bullet}(i)\\
<\rho> & Z \\
<\rho> \cap N &  B\\};

\path[->,font=\scriptsize] 
(m-1-1) edge node[above]{$\niso$}(m-1-2)
(m-2-1) edge (m-2-2)
(m-3-1) edge (m-3-2)
(m-2-1) edge (m-1-1)
(m-3-1) edge (m-2-1)
(m-2-2) edge (m-1-2)
(m-3-2) edge (m-2-2);
\end{tikzpicture}
\caption{}

\end{figure}
\FloatBarrier

Since $<\rho>/<\rho> \cap N \iso H^{-k}D^{\bullet}(i)$ as graded $H$-representations, they must be isomorphic as $H$-equivariant modules.  
\end{proof}

Note that we are not claiming that $Z=<\rho>$ and $B=<\rho> \cap N$. In fact it is not true by the degree considerations.

In fact, the $H$-equivariant structure of $\R^qp_* \scrptO_Z(i) \niso H^{-k}D^{\bullet}(i)$ can be described in a more compact way (this follows immediately from Lemma~\ref{lem:equiv}):

\begin{theorem}
\label{thm:lattice}

For given $i,q,f$ and $g$, let $k:=-q+g-1$ and $l:=-i-q-1$. Then the lattice of $\R^qp_* \scrptO_Z(i) \niso H^{-k}D^{\bullet}(i)$ as an $H$-equivariant module is obtained by taking the lattice of $I_{(l)^k}/I_{(l+1)^{k+1}}$ and adding to every representation one box to the end of each of the first $k$ rows of the $\GL(V)$-part and one box at the end of each of the first $l$ columns of the $\GL(W^*)$-part.
 
\end{theorem}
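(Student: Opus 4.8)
The plan is to deduce the theorem directly from Lemma~\ref{lem:equiv}, which already provides the isomorphism $\R^qp_* \scrptO_Z(i) \niso H^{-k}D^{\bullet}(i) \niso M$ of $H$-equivariant modules, where $M := <\rho>/<\rho> \cap N$. What remains is purely combinatorial: to recognise the lattice of $M$ as the asserted box-enlargement of the lattice of $I_{(l)^k}/I_{(l+1)^{k+1}}$. I would carry this out in two steps --- matching vertices, then matching arrows.

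\emph{Vertices.} By Example~\ref{ex:ideal}, $I_{(l)^k}/I_{(l+1)^{k+1}} \niso \underset{\tau}{\Drsum}\, \bbS_{\tau}V \tnsr \bbS_{\tau}W^*$, the sum over partitions $\tau$ with $\tau_k \geq l$ and $\tau_{k+1} \leq l$ (equivalently $(l)^k \subseteq \tau \not\supseteq (l+1)^{k+1}$; when $l=0$ this reads $\ell(\tau) \leq k$, and $I_{(0)^k}/I_{(1)^{k+1}} = R/I_{(1)^{k+1}}$). On the other side, the proof of Lemma~\ref{lem:equiv} shows $M$ is multiplicity-free with constituents $\bbS_{\tau+(1^k)}V \tnsr \bbS_{\delta'(\tau)}W^* \tnsr \ExtAlg^g W^*$, where $\tau$ again runs over the same set and $\delta'(\tau)$ is $\tau$ with one box appended at the bottom of each of its first $l$ columns (this is exactly the recipe for $\delta$ from Example~\ref{ex:coh} with the terminal length-$g$ column split off as the inert twist $\ExtAlg^g W^*$; the identification $\tau \leftrightarrow \lambda := \tau+(1^k)$ is inverse to $\lambda \mapsto \lambda-(1^k)$ on $A(k,l)$, and here $k = -q+g-1 \leq g-1$ is used). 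Thus $\bbS_{\tau}V \tnsr \bbS_{\tau}W^* \mapsto \bbS_{\tau+(1^k)}V \tnsr \bbS_{\delta'(\tau)}W^*$ is the asserted vertex bijection, once one drops $\ExtAlg^g W^*$ and discards any vertex sent to $0$ by the enlargement; the discarded ones are precisely the constituents of $I_{(l)^k}/I_{(l+1)^{k+1}}$ with $\ell(\tau) = g$ and $l \geq 1$ (for which the enlarged $\GL(W^*)$-part acquires $> g$ rows), which are exactly the ones with no counterpart in $M$.

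\emph{Arrows.} In $I_{(l)^k}/I_{(l+1)^{k+1}}$, Example~\ref{ex:multR} shows $V \tnsr W^*$ sends $\bbS_{\tau}V \tnsr \bbS_{\tau}W^*$ onto $\underset{\sigma}{\Drsum}\, \bbS_{\sigma}V \tnsr \bbS_{\sigma}W^*$ over those $\sigma \in VS(\tau,1)$ surviving in the quotient, so its arrows are exactly $\tau \to \sigma$ with $\sigma$ equal to $\tau$ plus one box. On the $M$ side, an arrow raises degree by one and the $\GL(V)$-parts of the vertices of $\tau$ and $\sigma$ are $\tau+(1^k)$ and $\sigma+(1^k)$, so any arrow of $M$ is again of the form $\tau \to \sigma$ with $\sigma$ equal to $\tau$ plus one box; only the converse --- that every such candidate is genuinely present in $M$ --- needs an argument. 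For that I would attach the extra ``$V$'' and ``$W$'' boxes of the multiplication at the cell where $\sigma$ exceeds $\tau$, then push them inside as in Subsection~\ref{subsec:Emb}: by Remark~\ref{rem} (the ``no sliding of $V$'' phenomenon used already in the proof of Lemma~\ref{lem:equiv}) the $k$ attached $\ExtAlg$'s and $l$ attached $S$'s remain in their rows and columns, so the image of $(V \tnsr W^*) \tnsr \bbS_{\mu}V \tnsr \bbS_{\eta}W^* \to R \tnsr (\ExtAlg^k V \tnsr \Sym_l W^*)$ reduces to the $V \tnsr W^*$-action on the copy of $\bbS_{\tau}V \tnsr \bbS_{\tau}W^*$ inside $R$, which is nonzero on the $\sigma$-component by the positivity argument of Example~\ref{ex:multR}. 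With vertices and arrows matched, the two lattices coincide, and Lemma~\ref{lem:equiv} finishes the proof.

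I expect this last point --- realising every candidate arrow $\tau \to \sigma$ inside $M$ --- to be the main obstacle, since it is the only step requiring one to actually unwind the Pieri and Cauchy embeddings and to combine the ``decorations stay put'' content of Remark~\ref{rem} with the positivity of multiplication in $R$; the vertex count and the one-directional arrow constraint are routine Young-diagram bookkeeping, and the vanishing of constituents whose part count exceeds $\dim W^* = g$ is harmless because these are exactly the vertices that the enlargement already removes.
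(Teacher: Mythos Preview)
Your proposal is correct and takes the same approach as the paper: the paper's entire proof is the parenthetical ``this follows immediately from Lemma~\ref{lem:equiv}'', and your argument is precisely the bookkeeping that makes that immediacy explicit --- the vertex bijection $\tau \leftrightarrow \lambda=\tau+(1^k)$ and the arrow-matching via the ``decorations stay put'' content of Remark~\ref{rem} are exactly what the induction inside the proof of Lemma~\ref{lem:equiv} already establishes. Your handling of the discarded vertices with $\ell(\tau)=g$ is a sound precaution (these are exactly the $\tau$ whose enlarged $\GL(W^*)$-part vanishes, matching the fact that the corresponding $\lambda$ would have $g$ parts while $A(k,l)$ only allows $g-1$), and your anticipated ``main obstacle'' is not really one: it is absorbed by the inductive step already carried out in the lemma's proof.
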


Here we should mention the result in \cite{Raic-Wey} where the terms of minimal resolution of $I_{(l)^k}$ were calculated.

\subsection{Lower Bound on the Projective Dimension of the Cohomology of $D^{\bullet}(i)$}

We can use the fact that the cohomology modules of $D^{\bullet}(i)$ are generated by one representation to obtain some information about their minimal $R$-resolutions. The terms in the minimal resolution of $H^{-k}D^{\bullet}(i)$ are obtained by calculating $\Tor^R_{\bullet}(H^{-k}D^{\bullet}(i), \Bbbk)$. Thus, let us take the tautological Koszul complex $K^{\bullet}$ which resolves $\Bbbk$ over $R$ (note that this is actually a complex of projectives in the category of $H$-equivariant modules):

\begin{figure}[h!]

\begin{tikzpicture}
\matrix(m) [matrix of math nodes, 
row sep=0.5em, column sep=0.5em, 
text height=1.5ex, text depth=0.25ex]
{K^{\bullet} \colon 0 & \underset{-fg}{R \underset{\Bbbk}{\tnsr} \ExtAlg^{fg}(V \tnsr W^*)} & R \underset{\Bbbk}{\tnsr} \ExtAlg^{fg-1} (V \tnsr W^*) & \cdots & R \underset{\Bbbk}{\tnsr} \ExtAlg^2 (V \tnsr W^*) & R \underset{\Bbbk}{\tnsr} (V \tnsr W^*) & \underset{0}{R} & 0\\};

\path[->,font=\scriptsize] 
(m-1-1) edge (m-1-2)
(m-1-2) edge (m-1-3) 
(m-1-3) edge (m-1-4) 
(m-1-4) edge (m-1-5)
(m-1-5) edge (m-1-6)
(m-1-6) edge (m-1-7)
(m-1-7) edge (m-1-8);
\end{tikzpicture}
\end{figure}
\FloatBarrier

After tensoring (over $R$) with $H^{-k}D^{\bullet}(i)$ we get the complex:

\begin{equation}
\begin{aligned}
H^{-k}D^{\bullet}(i) \underset{R}{\tnsr} K^{\bullet} \colon 0 \to \underset{-fg}{H^{-k}D^{\bullet}(i)\underset{\Bbbk}{\tnsr} \ExtAlg^{fg}(V \tnsr W^*)} \to H^{-k}D^{\bullet}(i)\underset{\Bbbk}{\tnsr} \ExtAlg^{fg-1} (V \tnsr W^*) \to \cdots \\
\to H^{-k}D^{\bullet}(i)\underset{\Bbbk}{\tnsr} \ExtAlg^2 (V \tnsr W^*) \to H^{-k}D^{\bullet}(i)\underset{\Bbbk}{\tnsr} (V \tnsr W^*) \to \underset{0}{H^{-k}D^{\bullet}(i)} \to 0
\end{aligned}
\end{equation}

Now, by merely combinatorial considerations, we obtain the following bound on the projective dimension of $H^{-k}D^{\bullet}(i)$:

\begin{proposition}
\label{prop:ProjDim}

The projective dimension of $\R^qp_* \scrptO_Z(i) \niso H^{-k}D^{\bullet}(i)$ is at least 

\[
(q+f-g+1)(min(-i,g)-q-1).
\]
\end{proposition}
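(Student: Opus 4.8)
My plan is to prove the bound by exhibiting a nonzero class in $\Tor^R_P(M,\Bbbk)$, where $M := H^{-k}D^{\bullet}(i) \niso \R^qp_*\scrptO_Z(i)$ and $P := (q+f-g+1)(\min(-i,g)-q-1)$. Rewriting the target in the abbreviations of this section, $P = (f-k)\min(l,k)$, and we may assume $P \geq 1$, so $k,l \geq 1$. The computation is carried out with the tautological Koszul complex $K^{\bullet}$ displayed above, which gives $\Tor^R_p(M,\Bbbk) \niso H_p\bigl(M \underset{\Bbbk}{\tnsr} \ExtAlg^{\bullet}(V \tnsr W^*)\bigr)$, the differential being induced by multiplication in $R$.

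The first step is bookkeeping. By Lemma~\ref{lem:equiv}, $M$ is cyclic over $R$, generated by the single irreducible $\rho = \bbS_{(l+1)^k}V \tnsr \bbS_{(l)^{k+1}}W^*$, which sits in one internal degree $d_0$; hence $M_d = 0$ for $d < d_0$ and $M_{d_0} = \rho$, while Theorem~\ref{thm:lattice} lets me read off
\[
M_{d_0+1} \niso \bigl(\bbS_{(l+2,(l+1)^{k-1})}V \tnsr \bbS_{(l+1,l^k)}W^*\bigr) \oplus \bigl(\bbS_{((l+1)^k,1)}V \tnsr \bbS_{(l^{k+1},1)}W^*\bigr),
\]
the second summand occurring exactly when $k \leq g-2$ (i.e. $q \geq 1$). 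In internal degree $d_0+p$ the term of $M \underset{\Bbbk}{\tnsr} \ExtAlg^{\bullet}(V \tnsr W^*)$ in homological degree $p+1$ lies over $M_{d_0-1}=0$, so there are no boundaries in that degree and
\[
\Tor^R_p(M,\Bbbk)_{d_0+p} \niso \ker\bigl(\rho \tnsr \ExtAlg^p(V \tnsr W^*) \xrightarrow{\partial} M_{d_0+1} \tnsr \ExtAlg^{p-1}(V \tnsr W^*)\bigr).
\]
It therefore suffices to show this kernel is nonzero for $p = P$.

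Since $\partial$ is $H$-equivariant, it is enough to produce one irreducible $\bbS_{\alpha}V \tnsr \bbS_{\beta}W^*$ occurring in the source but not in the target. Put $m := \min(l,k)$. Using the Cauchy formula $\ExtAlg^P(V \tnsr W^*) \niso \Drsum_{|\nu|=P}\bbS_{\nu}V \tnsr \bbS_{\nu'}W^*$, I take the rectangle $\nu := (m)^{f-k}$, which is legal ($m \leq k \leq g-1$ and $f-k \leq f$) with $|\nu| = m(f-k) = P$; then inside $\bbS_{(l+1)^k}V \tnsr \bbS_{\nu}V$ and $\bbS_{(l)^{k+1}}W^* \tnsr \bbS_{\nu'}W^*$ I single out
\[
\alpha := \bigl((l+1)^k,\, m^{f-k}\bigr), \qquad \beta := \bigl((l+f-k)^m,\, l^{k+1-m}\bigr),
\]
which are legal for $\GL(V)$ (exactly $f$ rows) and for $\GL(W^*)$ (exactly $k+1 = g-q$ rows), and for which a direct Littlewood--Richardson filling — put a constant in each row of the skew shape $\alpha/(l+1)^k$, resp. $\beta/(l)^{k+1}$ — shows $c^{\alpha}_{(l+1)^k,\nu}, c^{\beta}_{(l)^{k+1},\nu'} \geq 1$, so $\bbS_{\alpha}V \tnsr \bbS_{\beta}W^*$ occurs in $\rho \tnsr \ExtAlg^P(V \tnsr W^*)$. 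It does not occur in $M_{d_0+1} \tnsr \ExtAlg^{P-1}(V \tnsr W^*)$: every $\GL(V)$-constituent of $\bbS_{(l+2,(l+1)^{k-1})}V \tnsr \ExtAlg^{P-1}(V \tnsr W^*)$ contains $(l+2,(l+1)^{k-1})$, but $\alpha_1 = l+1$; and every $\GL(W^*)$-constituent of $\bbS_{(l^{k+1},1)}W^* \tnsr \ExtAlg^{P-1}(V \tnsr W^*)$ contains $(l^{k+1},1)$, but $\beta$ has only $k+1$ rows. Hence $\partial$ annihilates a copy of $\bbS_{\alpha}V \tnsr \bbS_{\beta}W^*$, giving $\Tor^R_P(M,\Bbbk) \neq 0$ and $\mathrm{pd}_R M \geq P$.

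The homological reduction and the extraction of $M_{d_0+1}$ from the lattice are routine; the real content — and the step I expect to demand the most care — is the combinatorial heart of the last paragraph: checking that the shapes $\nu,\alpha,\beta$ are legal precisely for the value $P = (f-k)\min(l,k)$ (this is where the row bounds for $\GL(V)$ and $\GL(W^*)$, hence the factor $f-k$ and the identity $k+1=g-q$, enter), that the relevant Littlewood--Richardson coefficients are nonzero, and that no constituent of $M_{d_0+1}\tnsr\ExtAlg^{P-1}(V\tnsr W^*)$ can reach $\bbS_{\alpha}V\tnsr\bbS_{\beta}W^*$. One should also note that the cases $l\geq k$ and $l<k$ enter only through the value of $m$, and that the twist by $\ExtAlg^gW^*$ suppressed throughout the section plays no role.
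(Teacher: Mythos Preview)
Your proof is correct and follows essentially the same approach as the paper: detect nonzero $\Tor^R_P(M,\Bbbk)$ by finding an irreducible in the lowest internal degree of $M\tnsr\ExtAlg^P(V\tnsr W^*)$ that cannot occur in $M_{d_0+1}\tnsr\ExtAlg^{P-1}(V\tnsr W^*)$. The paper actually considers two families of shapes (boxes added in two different ``grey regions''), giving bounds $\min(l,k)(g-1-k)$ and $\min(l,k)(f-k)$; since $f\geq g$ the second always dominates and yields the stated proposition, so your choice to go directly for the rectangle $\nu=(m)^{f-k}$ with $m=\min(l,k)$ and the explicit shapes $\alpha,\beta$ is a clean shortcut that lands on exactly the same bound.
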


\begin{proof}
The differentials in the complex $H^{-k}D^{\bullet}(i) \underset{R}{\tnsr} K^{\bullet}$ are $H$-equivariant (in particular, they map each representation to another representation) and preserve the degree. Let us look at the term $H^{-k}D^{\bullet}(i)\underset{\Bbbk}{\tnsr} \ExtAlg^j (V \tnsr W^*)$ in cohomological degree $-j$ of the complex $H^{-k}D^{\bullet}(i) \underset{R}{\tnsr} K^{\bullet}$. The image of previous differential $d^{-j-1}$ only starts from the next-to-the-lowest degree of $H^{-k}D^{\bullet}(i)\underset{\Bbbk}{\tnsr} \ExtAlg^j (V \tnsr W^*)$, thus every representation occuring in the lowest degree of $H^{-k}D^{\bullet}(i)\underset{\Bbbk}{\tnsr} \ExtAlg^j (V \tnsr W^*)$ which is in the kernel of the differential $d^{-j}$ will contribute to $\Tor^R_j(H^{-k}D^{\bullet}(i), \Bbbk)$. The representations which occur in the lowest degree of $H^{-k}D^{\bullet}(i)\underset{\Bbbk}{\tnsr} \ExtAlg^j (V \tnsr W^*)$, but do not occur in the next-to-the-lowest degree of $H^{-k}D^{\bullet}(i)\underset{\Bbbk}{\tnsr} \ExtAlg^{j-1} (V \tnsr W^*)$ have nowhere to go under the differential $d^{-j}$.

By using the Cauchy formula for exterior powers (\cite{Fult-Harr}, Section 6.1) we decompose the lowest degree of $H^{-k}D^{\bullet}(i)\underset{\Bbbk}{\tnsr} \ExtAlg^j (V \tnsr W^*)$ as follows:

\begin{equation}
\label{eq:decomp1}
(\bbS_{(l+1)^k}V \tnsr \bbS_{(l)^{k+1}}W^*) \tnsr \ExtAlg^j (V \tnsr W^*) \niso (\bbS_{(l+1)^k}V \tnsr \bbS_{(l)^{k+1}}W^*) \tnsr \underset{|\mu |=j}{\Drsum} \bbS_{\mu}V \tnsr \bbS_{\tilde{\mu}}W^*
\end{equation}

The next-to-the-lowest degree of $H^{-k}D^{\bullet}(i)\underset{\Bbbk}{\tnsr} \ExtAlg^{j-1} (V \tnsr W^*)$ is the tensor product of next-to-the-lowest degree of $H^{-k}D^{\bullet}(i)$ which is $\bbS_{(l+2, (l+1)^{k-1})}V \tnsr \bbS_{(l+1, l^k)}W^* \drsum \bbS_{((l+1)^k,1)}V \tnsr \bbS_{(l^{k+1},1)}W^*$ with $\ExtAlg^{j-1}(V \tnsr W^*)$:

\begin{equation}
\label{eq:decomp2}
[(\bbS_{(l+2, (l+1)^{k-1})}V \tnsr \bbS_{(l+1, l^k)}W^*) \tnsr \underset{|\mu |=j-1}{\Drsum} \bbS_{\mu}V \tnsr \bbS_{\tilde{\mu}}W^*] \drsum [(\bbS_{((l+1)^k,1)}V \tnsr \bbS_{(l^{k+1},1)}W^*) \tnsr \underset{|\mu |=j-1}{\Drsum} \bbS_{\mu}V \tnsr \bbS_{\tilde{\mu}}W^*]
\end{equation}

A partition $\mu$ of $j$ creates the summands $\bbS_{\alpha}V \tnsr \bbS_{\beta}W^*$ in decomposition~\ref{eq:decomp1} where $\bbS_{\alpha}V$ is a summand of $\bbS_{(l+1)^k}V \tnsr \bbS_{\mu} V$ and $\bbS_{\beta}W^*$ is a summand of $\bbS_{(l)^{k+1}}W^* \tnsr \bbS_{\tilde{\mu}}$. Suppose we want to have a summand in decomposition~\ref{eq:decomp1} where all the new boxes are added to $\bbS_{(l+1)^k}V \tnsr \bbS_{(l)^{k+1}}W^*$ only inside the grey region in the diagram below:

\begin{figure}[h!]
\begin{tikzpicture}[scale=0.5]
\draw (-11,0) -- (-4,0);
\draw (-4,0) -- (-4, 3);
\draw (-4,3) -- (-11,3);
\draw (-11,3) -- (-11,0);
\draw (-4,3) -- (0,3);
\draw (0,3) -- (0,0);
\draw (0,0) -- (-4,0);
\draw (-4,0) -- (-4,3);
\draw[fill=gray]  (0,0) -- (0,3) -- (-5,3) -- (-5,0)-- cycle;

\node[above] at (-8,3){\tiny{$l+1$}};
\node[left] at (-11,1.5){\tiny{$k$}};

\node at (1,1.5){\tiny{$\tnsr$}};

\draw (2,-1) -- (2,3) -- (8,3) -- (8,-1)-- cycle;
\draw (2,-1) -- (2,-6) -- (5,-6) -- (5,-1)-- cycle;
\draw[fill=gray] (2,-1) -- (2,-6) -- (5,-6) -- (5,-1)-- cycle;

\node[above] at (5,3){\tiny{$l$}};
\node[right] at (8,1){\tiny{$k+1$}};

\end{tikzpicture}
\caption{}
\end{figure}
\FloatBarrier

For that to happen, the necessary and sufficient conditions for $\mu$ are: $\height \mu \leq k$, $\height \mu =\tilde{\mu}_1 \leq l$ and $\mu_1=\height \tilde{\mu} \leq g-1-k$. As long as $j \leq min(l,k)(g-1-k)$ we have such a $\mu$.

Similarly, suppose we whant to make a summand in decomposition~\ref{eq:decomp1} where all the new boxes are added to $\bbS_{(l+1)^k}V \tnsr \bbS_{(l)^{k+1}}W^*$ only inside the grey region as in the diagram below: 

\begin{figure}[h!]
\begin{tikzpicture}[scale=0.5]

\draw (0,0) -- (0,3) -- (-7,3) -- (-7,0)-- cycle;
\draw (-7,0) -- (-3,0) -- (-3,-6) -- (-7,-6)-- cycle;
\draw[fill=gray] (-7,0) -- (-3,0) -- (-3,-6) -- (-7,-6)-- cycle;

\node[above] at (-4,3){\tiny{$l+1$}};
\node[left] at (-7,1.5){\tiny{$k$}};

\node at (1,1.5){\tiny{$\tnsr$}};

\draw (2,-1) -- (2,3) -- (8,3) -- (8,-1)-- cycle;
\draw (8,-1) -- (14,-1) -- (14,3) -- (8,3)-- cycle;
\draw[fill=gray] (8,-1) -- (14,-1) -- (14,3) -- (8,3)-- cycle;

\node[above] at (5,3){\tiny{$l$}};
\node[right] at (2,1.5){\tiny{$k+1$}};

\end{tikzpicture}
\caption{}
\end{figure}
\FloatBarrier

For that to happen, the necessary and sufficient conditions for $\mu$ are: $\mu_1 \leq l+1$, $\mu_1=\height \tilde{\mu} \leq k+1$ and $\height \mu \leq f-k$. As long as $j \leq min(l+1,k+1)(f-k)$ we have such a $\mu$.

Thus, if $j \leq max(min(l,k)(g-1-k), min(l,k)(f-k))=max(g-1-k, f-k)min(l,k)$ we always have representations in the next-to-the-lowest degree of $H^{-k}D^{\bullet}(i)\underset{\Bbbk}{\tnsr} \ExtAlg^j (V \tnsr W^*)$ which have nowhere to go under the differential $d^{-j}$ because every summand in the decomposition~\ref{eq:decomp2} has either

\begin{enumerate}
\item one box added to the first row of both $\GL(V)$- and $\GL(W^*)$-parts of $\bbS_{(l+1)^k}V \tnsr \bbS_{(l)^{k+1}}W^*$ (summands in the first square brackets in~\ref{eq:decomp2}), or\\
\item one box added to the first column of both $\GL(V)$- and $\GL(W^*)$-parts of $\bbS_{(l+1)^k}V \tnsr \bbS_{(l)^{k+1}}W^*$ (summands in the second square brackets in~\ref{eq:decomp2}).
\end{enumerate}

Substituting $k=-q+g-1$, $l=-i-q-1$ we simplify the expression $max(g-1-k, f-k)min(l,k)=max(q, q+f-g+1)min(-i-q-1,-q+g-1)=(q+f-g+1)(min(-i,g)-q-1)$
\end{proof}

\section{Tilting Theory Perspective}
\label{sec:Tilt}

Let us briefly recall some notions from tilting theory in algebraic geometry. By $X$ we will denote any variety over $\Spec \Bbbk$ with the structure map $f \colon X \to \Spec \Bbbk$. Let $\D^b(X)$ denote the bounded derived category of coherent sheaves on $X$. (see Chapter 3 of~\cite{Huy}). The following definitions are from \cite{Huy}:

\begin{definition}
An object $E \in \D^b(X)$ is called exceptional if
\[
\Hom_{D^b(X)}(E, E[l]) \niso
\begin{cases}
\Bbbk \text{ if } l=0 \\
0 \text{ otherwise}
\end{cases}
\]

An (ordered) sequence $(E_1, \cdots E_n)$ of exceptional objects is called an exceptional sequence if $\Hom_{\D^b(X)}(E_i, E_j[l])=0$ for all $i>j$ and all $l$.
 
An exceptional sequence $(E_1, \cdots E_n)$ is called strong if it satisfies $\Hom_{\D^b(X)}(E_i, E_j[l])=0$ for all $i,j$ and $l \neq 0$.

An exceptional sequence $(E_1, \cdots E_n)$ is called full if $\D^b(X)$ is generated by $\{E_1, \cdots E_n \}$, i.e. any full triangualted subcategory containing the objects $\{E_1, \cdots E_n \}$ is equivalent to $\D^b(X)$ via the inclusion.

\end{definition}

The importance of full strong exceptional sequences is formulated in the following theorem (see Theorem 7.6 in Fourier-Mukai Transforms,~\cite{Hand}):

\begin{theorem}
\label{thm:tilt}

Let $X$ be projective over a Noetherian affine scheme of finite type. Suppose there exists a full strong exceptional sequence $(E_1, \cdots E_n)$ in $\D^b(X)$. Let $E:=\underset{i}{\Drsum} E_i$ and let $A:= \End_{\D^b(X)}(E)$, then 

\[
\R\Hom(E, \_) \colon \D^b(X) \to \D^b(\fmod-A)
\]

is an exact equivalence.
\end{theorem}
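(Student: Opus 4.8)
The plan is to obtain the equivalence by the standard tilting argument (going back to Bondal and Rickard): realize the two functors as adjoints, and then bootstrap along distinguished triangles using a ``thick subcategory'' swindle on both sides.

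\emph{The two functors.} I would write $\Phi := \R\Hom_X(E,\_)\colon \D^b(X)\to \D(\mathrm{Mod}\text{-}A)$, where the right $A$-module structure on $\R\Hom_X(E,F)$ is by precomposition with endomorphisms of $E$; this $\Phi$ is exact (triangulated), and because $E$ is a perfect complex on $X$ (it is assembled out of the $E_i$) it preserves boundedness and finite generation, so in fact $\Phi\colon \D^b(X)\to \D^b(\fmod\text{-}A)$. The functor $\Phi$ has a left adjoint $\Psi := (\,\_\,)\otimes^{\mathbf L}_A E \colon \D(\mathrm{Mod}\text{-}A)\to \D(\Qcoh(X))$; write $\eta\colon \mathrm{id}\to \Phi\Psi$ and $\varepsilon\colon \Psi\Phi\to\mathrm{id}$ for the unit and counit. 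I would also check that $\Psi$ restricts to $\D^b(\fmod\text{-}A)\to\D^b(X)$: the full subcategory of $\D^b(\fmod\text{-}A)$ sent into $\D^b(X)$ is closed under cones, shifts and direct summands, and it contains the free module $A$ since $\Psi(A)=E$, hence it is everything once one knows (see below) that $A$ classically generates $\D^b(\fmod\text{-}A)$.

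\emph{The two inputs.} The role of \emph{strongness} is that $\Phi(E)=\R\Hom_X(E,E)=\End_X(E)=A$ is concentrated in degree $0$; more precisely $\Phi(E_i)=P_i$, the indecomposable projective right $A$-module attached to the $i$-th idempotent, so $\Phi$ carries $E$ to the regular module $A$, and unwinding the definitions shows $\eta_A\colon A\to\Phi\Psi(A)=\Phi(E)=A$ is the identity. The role of the \emph{exceptional-sequence} axioms is that $A$ has finite global dimension: $\End_X(E_i)=\Bbbk$ by exceptionality and $\Hom_{\D^b(X)}(E_i,E_j[l])=0$ for $i>j$ by the ordering condition, so (for the given order of idempotents) $A$ is a directed finite-dimensional algebra, and directed algebras have finite global dimension; consequently $\D^b(\fmod\text{-}A)$ is generated, as a thick triangulated subcategory, by $A$ (equivalently by the $P_i$). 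On the geometric side, \emph{fullness} of $(E_1,\dots,E_n)$ says exactly that the smallest thick subcategory of $\D^b(X)$ containing $E=\bigoplus_i E_i$ is all of $\D^b(X)$.

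\emph{Conclusion, and where the work is.} The full subcategory of $\D^b(\fmod\text{-}A)$ on which $\eta$ is an isomorphism is thick and, by the computation above, contains $A$, hence equals $\D^b(\fmod\text{-}A)$; so $\Psi$ is fully faithful on $\D^b(\fmod\text{-}A)$. Its essential image is then a thick subcategory of $\D^b(X)$ containing $\Psi(A)=E$, hence equals $\D^b(X)$; so $\Psi$ is essentially surjective. Therefore $\Psi\colon \D^b(\fmod\text{-}A)\xrightarrow{\ \sim\ }\D^b(X)$ is an exact equivalence, and its right adjoint $\Phi=\R\Hom_X(E,\_)$ is a quasi-inverse, hence itself an exact equivalence, which is the assertion. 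I expect the main obstacle to be the finiteness bookkeeping that legitimizes the bootstrap on both sides at once: that $E$ is perfect and $A$ of finite global dimension (so that $\Phi$ and $\Psi$ genuinely land in the \emph{bounded, finite-type} categories and $A$ classically generates $\D^b(\fmod\text{-}A)$), together with the verification that $\eta_A$ and $\varepsilon_E$ are literally identity maps rather than merely \emph{some} isomorphisms. A more hands-on alternative for full faithfulness, which I would keep in reserve, is to verify directly that $\R\Hom_X(E_i,E_j[m])\to\R\Hom_A(\Phi E_i,\Phi E_j[m])$ is an isomorphism for all $i,j,m$ --- both sides are explicit, equal to $\Hom_X(E_i,E_j)$ resp.\ $\Hom_A(P_i,P_j)$ for $m=0$ and $0$ otherwise, using strongness and projectivity of the $P_i$ --- and then extend to all of $\D^b(X)$ along distinguished triangles and shifts.
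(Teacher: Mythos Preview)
The paper does not supply a proof of this theorem at all: it is quoted verbatim as ``Theorem 7.6 in Fourier--Mukai Transforms, \cite{Hand}'' and used as a black box. Your outline is the standard Bondal--Rickard tilting argument and is correct in substance; there is nothing to compare it against on the paper's side.

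One point of precision is worth flagging. In the stated generality $X$ is projective over $\Spec R$ for a finitely generated $\Bbbk$-algebra $R$ (and this is exactly the situation the paper cares about, with $X=Z$ projective over $\Spec R=\Spec\Sym(V\otimes W^*)$). Then $A=\End_{\D^b(X)}(E)$ is a finite $R$-algebra, not a finite-dimensional $\Bbbk$-algebra, so your phrase ``directed finite-dimensional algebra'' is too strong. The argument you want survives: exceptionality still gives $\End(E_i)=\Bbbk$ on the diagonal and the ordering condition gives a block upper-triangular shape over $R$, from which finite global dimension (hence $A$ classically generates $\D^b(\fmod\text{-}A)$) follows by the usual d\'evissage along the idempotent filtration. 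Likewise, ``$E$ is perfect'' needs the hypothesis that $X$ is regular or that the $E_i$ are perfect complexes; in the paper's intended application this is automatic, but in the abstract statement it is part of the bookkeeping you correctly anticipated as the main obstacle.
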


Even though in the theorem above one needs the exceptional sequence to be strong, the exceptional sequences which are not strong are quite important. Existence of exceptional sequence in $\D^b(X)$ gives a sequence of semi-orthogonal subcategories in $\D^b(X)$ and if the exceptional sequence is full then one gets a semi-orthogonal decomposition of $\D^b(X)$ (Example 1.60 in~\cite{Huy}). A good analogy for semi-orthogonal sequence of subcategories is a sequence of orthogonal subspaces of a vectror space. In this analogy full exceptional sequence corresponds to orthogonal basis of a vector space. Also see Theorem~\ref{thm:mut} below. 

\begin{example}
The content of this example is a classical result by Beilinson(~\cite{Bei}). Let $X=\bbP(W^*)$ (recall that the dimension of $\bbP$ is $g-1$). Then for any $a \in \bbZ$ the sequence $(\scrptO(a), \scrptO(a+1), \cdots \scrptO(a+g-1))$ is a strong full exceptional. Also the sequence $(\scrptO, \Omega(1), \ExtAlg^2 \Omega(2), \ExtAlg^{g-1} \Omega(g-1))$ is a strong full exceptional sequence.

Let $E= \overset{i=g-1}{\underset{i=0}{\Drsum}} \ExtAlg^i \Omega(i)$, then $A:=\End_{\D^b(X)}(E)$ is the upper triangular matrix ring

\begin{eqnarray*}
A \niso 
\begin{bmatrix}
    \ExtAlg^0 W & \ExtAlg^1 W& \cdots & \ExtAlg^{g-1} W\\
   0& \ExtAlg^1 W& \cdots & \ExtAlg^{g-2} W \\
    
    &  & \vdots &  & \\
   0 & 0 & \cdots & \ExtAlg^0 W\\
\end{bmatrix}
\end{eqnarray*}

Thus by Theorem~\ref{thm:tilt} $\D^b(\bbP(W^*)) \niso \D^b(\fmod-A)$. If in Theorem~\ref{thm:tilt} we put $E'=\overset{i=g-1}{\underset{i=0}{\Drsum}} \scrptO(i)$ then $A'=\End_{\D^b(X)}(E')$ is the lower triangular matrix ring

\begin{eqnarray*}
A' \niso 
\begin{bmatrix}
    \Sym_0 W^* & 0 & \cdots & 0\\
   \Sym_1 W^*& \Sym_0 W^* & \cdots & 0 \\
    
    &  & \vdots &  & \\
    \Sym_{g-1} W^* & \Sym_{g-2} W^* & \cdots & \Sym_0 W^*\\
\end{bmatrix}
\end{eqnarray*}

So, by Theorem~\ref{thm:tilt} we also have that $\D^b(\bbP(W^*)) \niso \D^b(\fmod-A')$. 
\end{example}

The two sequences in the above example are closesly related by so-called mutations. Here we follow the exposition from~\cite{Bohn}. For $A, B \in \D^b(X)$ set $\Hom^{\times}(A,B):=\underset{k \in \bbZ}{\Drsum} \Ext^k (A,B)$ - a graded finite-dimensional $\Bbbk$-vector space. For $Y \in \D^b(X)$ and $U \niso \underset{k \in \bbZ}{\Drsum}U_k$ any graded $\Bbbk$-vector space we define $Y \tnsr U:= \underset{k \in \bbZ}{\Drsum} Y \tnsr U_k [-k]$ where $Y \tnsr U_k$ means the complex $Y \tnsr f^* U_k$. 

\begin{definition}
\label{def:mut}

Let $(E_1, E_2)$ be an exceptional sequence in $\D^b(X)$. The object $L_{E_1}E_2 \in \D^b(X)$ is called the left mutation of $E_2$ across $E_1$ and is defined by the distinguished triangle:

\[
L_{E_1}E_2 \to \Hom^{\times}(E_1, E_2) \tnsr E_1 \to E_2 \to L_{E_1}E_2
\]

The map $\Hom^{\times}(E_1, E_2) \tnsr E_1 \to E_2$ is the canonical evaluation map.

Similarly, the object $R_{E_2}E_1 \in \D^b(X)$ is called the right mutation of $E_1$ across $E_2$ and is defined by the distinguished triangle:

\[
R_{E_2}E_1[-1] \to E_1 \to \Hom^{\times}(E_1, E_2)^* \tnsr E_2 \to R_{E_2}E_1
\]
 
\end{definition}

\begin{theorem}
\label{thm:mut}
Let $(E_1, \cdots E_n)$ be an exceptional sequence in $\D^b(X)$. Set

\[
R_i(E_1, \cdots E_n):= (E_1, \cdots E_{i-1}, E_{i+1}, R_{E_{i+1}}E_i, E_{i+2}, \cdots E_n)
\]

and

\[
L_i(E_1, \cdots E_n):= (E_1, \cdots E_{i-1}, L_{E_i}E_{i+1}, E_i, E_{i+2},  \cdots E_n)
\]

Then $R_i(E_1, \cdots E_n)$ and $L_i(E_1, \cdots E_n)$ are again exceptional sequences. If $(E_1, \cdots E_n)$ is full, then so are $R_i(E_1, \cdots E_n)$ and $L_i(E_1, \cdots E_n)$. Moreover, $R_i$ and $L_i$ are inverse operations and they define the action of Braid group on $n$ strings on the set of exceptional sequences with $n$ terms in $\D^b(X)$. If the sequence $(E_1, \cdots E_n)$ is strong, then  neither $L_i(E_1, \cdots E_n)$ or $R_i(E_1, \cdots E_n)$ is in general strong.

\end{theorem}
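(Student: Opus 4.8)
The plan is to push everything down to the case of an exceptional \emph{pair} $(A,B)$, where the assertions about $L_AB$ and $R_BA$ become short diagram chases with the defining triangles, and then to reassemble the braid‑group action from the pair computations. The reduction is cheap: since $R_i$ and $L_i$ only touch the entries in positions $i,i+1$, all orthogonality conditions $\Hom^{\times}(E_a,E_b)=0$ with $\{a,b\}\cap\{i,i+1\}=\varnothing$ are untouched, and the conditions relating the new object $L_{E_i}E_{i+1}$ to an outside term $E_j$ — namely $\Hom^{\times}(L_{E_i}E_{i+1},E_j)=0$ for $j<i$ and $\Hom^{\times}(E_j,L_{E_i}E_{i+1})=0$ for $j>i+1$ — drop out immediately on applying $\R\Hom(-,E_j)$, resp. $\R\Hom(E_j,-)$, to the triangle $L_{E_i}E_{i+1}\to \Hom^{\times}(E_i,E_{i+1})\tnsr E_i\to E_{i+1}\to L_{E_i}E_{i+1}[1]$: the middle term is built from $E_i$, so $\R\Hom$ against it reduces to $\R\Hom$ against $E_i$, and the original sequence being exceptional kills all three contributions. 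So it remains to show that, for an exceptional pair $(A,B)$, both $L_AB$ and $R_BA$ are exceptional and $(L_AB,A)$, $(B,R_BA)$ are exceptional pairs; and, for the braid relations, to treat $n=2$ and $n=3$ only (the case $\lvert i-j\rvert\ge 2$ acting on disjoint adjacent pairs being automatic).

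For the pair case, write $V:=\Hom^{\times}(A,B)$ and use the triangle $L_AB\to V\tnsr A\to B\to L_AB[1]$, whose first map is dual, via the triangle, to the canonical evaluation $V\tnsr A\to B$. Applying $\R\Hom(A,-)$ and using that $A$ is exceptional gives $\Hom^{\times}(A,V\tnsr A)\niso V$; because the evaluation is the universal map, the induced map to $\Hom^{\times}(A,B)=V$ is the identity, so $\Hom^{\times}(A,L_AB)=0$, which is one of the two semi‑orthogonalities for $(L_AB,A)$. Applying $\R\Hom(-,A)$ and using $\Hom^{\times}(B,A)=0$ identifies $\Hom^{\times}(L_AB,A)$ with $\Hom^{\times}(V\tnsr A,A)$, the graded dual $V^{\vee}$. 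Feeding these back — apply $\R\Hom(-,B)$ (using $\Hom^{\times}(B,B)=\Bbbk$ and that the coevaluation $\Bbbk\to V^{\vee}\tnsr V$ is a split injection) and then $\R\Hom(L_AB,-)$ — the long exact sequences collapse and leave $\Hom^{\times}(L_AB,L_AB)=\Bbbk$ in degree $0$; so $L_AB$ is exceptional. The statements for $R_BA$ follow by running the same computation on its defining triangle, or by the duality (Serre or $\R\Hom(-,\scrptO)$) functor.

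To see that $R_i$ and $L_i$ are mutually inverse it suffices to prove $R_A(L_AB)\niso B$. Set $L:=L_AB$; the previous step gave $\Hom^{\times}(L,A)\niso V^{\vee}$, so the triangle defining $R_AL$ becomes $R_AL[-1]\to L\to V\tnsr A\to R_AL$, with middle arrow the canonical coevaluation. The only point here that is not pure formalism is that, under the identification $\Hom^{\times}(L,A)^{*}\tnsr A\niso V\tnsr A$ just made, this coevaluation \emph{coincides} with the structure morphism $L\to V\tnsr A$ of the $L_AB$‑triangle; this is read off from the fact that the isomorphism $\Hom^{\times}(V\tnsr A,A)\xrightarrow{\ \sim\ }\Hom^{\times}(L,A)$ of the long exact sequence is precisely precomposition with that structure morphism. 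Once this is checked, comparing with the rotated $L_AB$‑triangle $B[-1]\to L\to V\tnsr A\to B$ and using uniqueness of cones gives $R_AL\niso B$, and symmetrically $L_{E_{i+1}}R_{E_{i+1}}E_i\niso E_i$.

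Finally: for $\lvert i-j\rvert\ge 2$ the operations act on disjoint adjacent pairs and commute; for $L_iL_{i+1}L_i=L_{i+1}L_iL_{i+1}$ one reduces to an exceptional triple $(A,B,C)$ and checks that both iterations land on the same triple, a bookkeeping argument with the octahedral axiom applied to the three defining triangles (equivalently, invoke the mutation calculus for admissible subcategories). Combined with $R_i=L_i^{-1}$ and the standard presentation of the braid group, this produces the $B_n$‑action. Fullness is preserved because $L_{E_i}E_{i+1}$, being the cone of a morphism between direct sums of shifts of $E_i$ and $E_{i+1}$, lies in $\langle E_i,E_{i+1}\rangle$, while conversely $E_{i+1}=R_{E_i}(L_{E_i}E_{i+1})\in\langle E_i,L_{E_i}E_{i+1}\rangle$, so the generated triangulated subcategory does not change. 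For the last sentence it is enough to exhibit one strong exceptional sequence a mutation of which has a nonzero $\Ext^{1}$ between consecutive terms; this is classical, and the instance pertinent to this paper is the one analyzed in Section~\ref{sec:Tilt} (cf.\ also~\cite{Bohn}). I expect the braid relation to be the most laborious step, and the coincidence of the two structure maps in the "inverse operations" step to be the only genuinely delicate point; the rest is a mechanical consequence of exceptionality applied to the defining triangles.
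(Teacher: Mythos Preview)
The paper does not supply a proof of this theorem: it is quoted as a standard result from the literature, with the exposition explicitly following~\cite{Bohn} (see also~\cite{Huy}). So there is no ``paper's own proof'' to compare against; the theorem functions here purely as background for the tilting discussion.

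Your sketch is the standard argument and is essentially correct. The reduction to exceptional pairs, the computation of $\Hom^{\times}(A,L_AB)$ and $\Hom^{\times}(L_AB,A)$ from the defining triangle, and the fullness argument via $\langle E_i,E_{i+1}\rangle=\langle L_{E_i}E_{i+1},E_i\rangle$ are exactly what one finds in Bondal's original treatment and in~\cite{Bohn}. Your identification of the two potentially delicate points is accurate: the coincidence of the coevaluation map with the structure map in the proof of $R_A(L_AB)\simeq B$ does require a small check (it is a Yoneda argument), and the braid relation for an exceptional triple is indeed the place where one either invokes the octahedral axiom carefully or, more conceptually, passes to the language of admissible subcategories and mutation functors, where the relation becomes the associativity of iterated left/right adjoints. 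For the final clause about strongness not being preserved, a concrete classical example (rather than a forward reference to Section~\ref{sec:Tilt}, which concerns lifting to $Z$ rather than mutation on $\bbP(W^*)$ itself) would be cleaner; the usual one is a mutation of the Beilinson sequence on $\bbP^2$.
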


\begin{example}
Consider the (full, strong) exceptional sequece $(\ExtAlg^{g-1}\Omega(g-1), \ExtAlg^{g-2}\Omega(g-2), \cdots \scrptO)$. Then $R_{g-1} (\ExtAlg^{g-1}\Omega(g-1), \ExtAlg^{g-2}\Omega(g-2), \cdots \scrptO)= (\ExtAlg^{g-1}\Omega(g-1), \ExtAlg^{g-2}\Omega(g-2), \cdots \ExtAlg^2 \Omega (2), \scrptO, R_{\scrptO} \ExtAlg^1 \Omega (1))$. To calculate $R_{\scrptO} \ExtAlg^1 \Omega (1)$ we use the Euler sequence:

\[
0 \to \Omega(1) \to \pi^* W \to \scrptO(1) \to 0
\]

One can calculate using Theorem~\ref{thm:BBW} that $\R^0 \pi_* \scrptH om(\ExtAlg^j (\Omega (1)), \scrptO) \niso \ExtAlg^j W^*$ and the map $\Omega(1) \to \pi^* W$ is the same as in Definition~\ref{def:mut} applied to $R_{\scrptO} \ExtAlg^1 \Omega (1)$. Thus $R_{\scrptO} \ExtAlg^1 \Omega (1)= \scrptO(1)$ and so $(\ExtAlg^{g-1}\Omega(g-1), \ExtAlg^{g-2}\Omega(g-2), \cdots \ExtAlg^2 \Omega (2), \scrptO, R_{\scrptO} \ExtAlg^1 \Omega (1)) = (\ExtAlg^{g-1}\Omega(g-1), \ExtAlg^{g-2}\Omega(g-2), \cdots \ExtAlg^2 \Omega (2), \scrptO, \scrptO(1))$.

Similarly, one can use the exact sequence 

\[
0 \to \ExtAlg^2 \Omega (2) \to \pi^* \ExtAlg^2 W \to \Omega(2) \to 0
\] 

to see that $R_{\scrptO} \ExtAlg^2 \Omega(2) = \Omega(2)$. So we see that one can start from the sequence $(\ExtAlg^{g-1}\Omega(g-1), \ExtAlg^{g-2}\Omega(g-2), \cdots \scrptO)$ and get to the sequence $(\scrptO, \scrptO(1), \cdots \scrptO(g-1))$ by a series of right mutations. At intermediate steps we get exceptional sequences of the form $(\ExtAlg^{g-1} \Omega (g-1), \cdots \ExtAlg^l \Omega(l), \scrptO, \cdots \scrptO(l-1))$.
 
\end{example}

The strong full exceptional sequence $(\scrptO, \Omega(1), \ExtAlg^2 \Omega(2), \ExtAlg^{g-1} \Omega(g-1))$ on $\bbP(W^*)$ remains such after being lifted to the total space of the vector bundle $Z$, i.e. the sequence $((\pi \iota)^*\scrptO, (\pi \iota)^* \Omega(1), (\pi \iota)^* \ExtAlg^2 \Omega(2), (\pi \iota)^* \ExtAlg^{g-1} \Omega(g-1))$ is a strong full exceptional sequence on $Z$. This is one of the results (Theorem C) in ~\cite{Buch-Leu-Van-den-Bergh}.

On the other hand, if we lift the exceptional sequence $(\scrptO_{\bbP(W^*)}, \cdots \scrptO_{\bbP(W^*)}(g-1))$ from $\bbP(W^*)$ to $Z$ we get the sequence $(\scrptO_Z, \cdots \scrptO_Z(g-1))$. For $i \geq j$ let us calculate $\Hom_{\D^b(Z)}(\scrptO_Z(i), \scrptO_Z(j)[l])$:

\[
\Hom_{\D^b(Z)}(\scrptO_Z(i), \scrptO_Z(j)) \niso \Hom_{\D^b(Z)}(\scrptO, \scrptO(j-i)[l]) \niso \R^l p_* \scrptO_Z(j-i)
\]

The cohomology $R^l p_* \scrptO_Z(j-i)$ is computed by the complex $D^{\bullet}(j-i)$, so we see that each $\scrptO_Z(l)$ remains exceptional, but the sequence $(\scrptO_Z, \cdots \scrptO_Z(g-1))$ is not exceptional, and the obstruction for it to be exceptional is precisely the non-exactness of the complexes $D^{\bullet}(l)$ for $l<0$.

As we saw above, the sequence $(\ExtAlg^{g-1}\Omega(g-1), \ExtAlg^{g-2}\Omega(g-2), \cdots \scrptO)$ (whose lifting to $Z$ remains exceptional) can be transformed to $(\scrptO, \cdots \scrptO(g-1))$ through a series of right mutations. At intermediate steps we have sequences of the form $(\ExtAlg^{g-1} \Omega (g-1), \cdots \ExtAlg^l \Omega(l), \scrptO, \cdots \scrptO(l-1))$. Checking whether lifting of such sequence is an exceptional sequence ammounts to computation of 

\begin{equation}
\begin{aligned}
\label{eq:excseq}
\Hom_{\D^b(Z)}(\scrptO_Z(s), (\pi \iota)^* \ExtAlg^k \Omega(k)[l]) \niso \Hom_{\D^b(Z)}(\scrptO_Z, (\pi \iota)^*\ExtAlg^k \Omega(k-s)[l]) \niso \\
\R^l (p \iota) (\iota^* \pi^* \ExtAlg^k \Omega(k-s)) \niso \R^lp_*(\pi^* \ExtAlg^k \Omega(k-s) \tnsr \iota_*\scrptO_Z) 
\end{aligned}
\end{equation}

Using commutativity of the diagram~\ref{fig:setup} we can go in counter-clockwise direction to compute:

\begin{eqnarray*}
\R^l(\pi'p)_*(\pi^* \ExtAlg^k \Omega(i) \tnsr \iota_* \scrptO_Z) \niso \R^lp'_* (\pi_* (\pi^* \ExtAlg^k \Omega(i) \tnsr \iota_* \scrptO_Z)) \niso \R^lp'_*(\ExtAlg^k \Omega(i) \tnsr \pi_* \iota_* \scrptO_Z) \niso \\
\R^lp'_*(\ExtAlg^k \Omega(i) \tnsr \Sym (\scrptT(-1) \tnsr p'^*V)) \niso \underset{\lambda}{\Drsum}(\R^lp'_*[\ExtAlg^k \Omega(i) \tnsr \bbS_{\lambda} \scrptT(-1)] \tnsr \bbS_{\lambda}V)
\end{eqnarray*}

For each $\lambda=(\lambda_1, \cdots \lambda_{g-1})$ we have:

\[
\ExtAlg^k \Omega(i) \tnsr \bbS_{(\lambda_1, \cdots \lambda_{g-1})} \scrptT(-1) \niso \bbS_{(1^k)}(\Omega(1)) \tnsr \bbS_{(-\lambda_{g-1}, \cdots -\lambda_1)}(\Omega(1)) \tnsr \scrptO(i-k)
\]

Let $\lambda':=(\lambda'_1, \cdots \lambda'_{g-1}) \in VS(\lambda, k)$. Then in order for this $\lambda'$ to contribute to $\R^qp'_*[\ExtAlg^k \Omega(i) \tnsr \bbS_{\lambda} \scrptT(-1)]$ we see as in Example~\ref{ex:coh}, that $\lambda'$ must satisfy the inequalities:

\[
\lambda'_{-q+g}<-d-q \leq \lambda'_{-q+g-1}
\]

where $d:=i-k$. In particular, we see that for each $d=i-k \leq -1$, there exists $0 \leq q \leq g-1$ such that $\R^l(\pi'p)_*(\pi^* \ExtAlg^k \Omega(i) \tnsr \iota_* \scrptO_Z)$ is non-zero, which means (see Equation~\ref{eq:excseq}) that none of the exceptional sequences of the form $(\ExtAlg^{g-1} \Omega (g-1), \cdots \ExtAlg^l \Omega(l), \scrptO, \cdots \scrptO(l-1))$ on $\bbP(W^*)$ lift to an exceptional sequence on $Z$.

\section{Complexes $D^{\bullet}(i,k)$}

Fix $k, i \in \bbZ, g-1 \geq k \geq 0$. Suppose we want to compute $\R^{\bullet} (p \iota)_* ((\pi \iota)^* \ExtAlg^k \Omega(i))$. As with the exceptional sequence $(\scrptO, \cdots \scrptO(g-1))$, we can construct a family of complexes which will compute the desired cohomology.

We use the projection formula:

\[
\R^{\bullet} (p \iota)_* ((\pi \iota)^* \ExtAlg^k \Omega(i)) \niso \R^{\bullet}p_* \R^{\bullet} \iota_*(\iota^* \pi^* \ExtAlg^k \Omega(i)) \niso \R^{\bullet}p_* (\pi^* \ExtAlg^k \Omega(i) \tnsr \R^{\bullet} \iota_* \scrptO_Z)
\]

But in $\D^b(\bbP)$ we can replace  $\R^{\bullet} \iota_* \scrptO_Z$ by its resolution $\scrptK^{\bullet}$. Thus $\R^{\bullet}p_* (\pi^* \ExtAlg^k \Omega(i) \tnsr \R^{\bullet} \iota_* \scrptO_Z)$, can be computed as $\R^{\bullet}p_*(\_)$ of the complex $\scrptK^{\bullet} \tnsr \pi^* \ExtAlg^k \Omega(i)$. We proceed as in Section~\ref{sec:GenComp}, namely we choose a Cartan-Eilenberg resolution of $\scrptK^{\bullet} \tnsr \pi^* \ExtAlg^k \Omega(i)$, apply $p_*$ to it and consider two spectral sequences associated to this double complex. The spectral sequence $_{\bullet}^{row}E^{\bullet, \bullet}$ converges to $\R^{\bullet} (p \iota)_* ((\pi \iota)^* \ExtAlg^k \Omega(i))$. Let us look at $_1^{col}E^{\bullet, \bullet}$. We have: 

\[
_1^{col}E^{p,q} = \R^qp_*(\scrptK^p \tnsr \pi^* \ExtAlg^k \Omega(i)) \niso  \R^qp_*(p^* \ExtAlg^{-p}F(p) \tnsr \pi^* \ExtAlg^k  \Omega(i)) \niso \ExtAlg^{-p}F \tnsr \R^qp_* \pi^* \ExtAlg^k \Omega(i+p)
\]

where the last isomorphism is the projection formula. Because cohomology commutes with flat base change, we can simplify further:

\[
\ExtAlg^{-p}F \tnsr \R^qp_* \pi^* \ExtAlg^k \Omega(i+p) \niso \ExtAlg^{-p}F \tnsr \pi'^* \R^qp'_* \ExtAlg^k \Omega (i+p)
\]

For convenience let $d:=i-k$. Now using Theorem~\ref{thm:BBW} we compute:

\[
\R^qp_* \ExtAlg^k \Omega(i+p) \niso
\begin{cases}
\bbS_{(d+p,1^k,0, \cdots 0)}W \text{ if $p\geq -d+1$ and $q=0$} \\
\Bbbk \text{ if $p=-k-d$ and $q=k$} \\
\bbS_{(-d-p-g+1,1 \cdots, 1, 0^k)}W^* \text{ if $p\leq -d-g$ and $q=g-1$} \\
0 \text{ otherwise} \\
\end{cases}
\]

As in Section~\ref{sec:GenComp} we draw the two spectral sequences superimposed on each other we get the following picture:

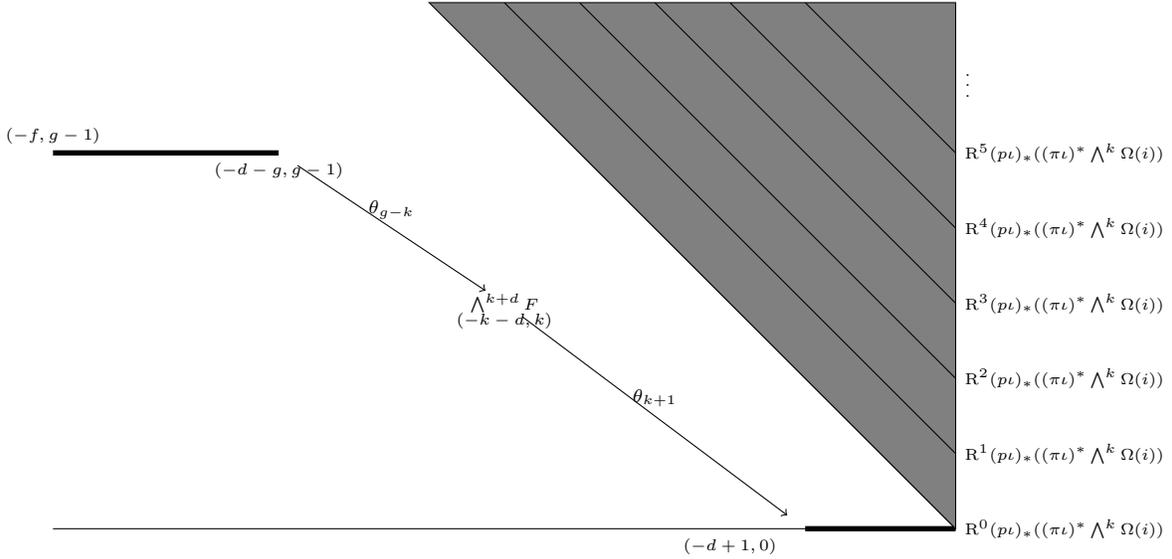
\begin{figure}[h!]
\begin{tikzpicture}[scale=1]
\draw (-12,0) -- (0,0);
\draw (0,0) -- (0, 7);
\draw (0,0) -- (-6,6);
\draw [line width=2] (-12, 5) -- (-9, 5);
\draw [line width=2] (-2,0) -- (0,0);
\node[above] at (-12,5){\tiny{$(-f,g-1)$}};
\node[below] at (-9,5){\tiny{$(-d-g,g-1)$}};
\node[below] at (-6,3){\tiny{$(-k-d,k)$}};
\node at (-6,3){\tiny{$\ExtAlg^{k+d}F$}};
\node[below] at (-3,0){\tiny{$(-d+1,0)$}};
\node[right] at (0,0){\tiny{$\R^0 (p \iota)_* ((\pi \iota)^* \ExtAlg^k \Omega(i))$}};
\node[right] at (0,1){\tiny{$\R^1 (p \iota)_* ((\pi \iota)^* \ExtAlg^k \Omega(i))$}};
\node[right] at (0,2){\tiny{$\R^2 (p \iota)_* ((\pi \iota)^* \ExtAlg^k \Omega(i))$}};
\node[right] at (0,3){\tiny{$\R^3 (p \iota)_* ((\pi \iota)^* \ExtAlg^k \Omega(i))$}};
\node[right] at (0,4){\tiny{$\R^4 (p \iota)_* ((\pi \iota)^* \ExtAlg^k \Omega(i))$}};
\node[right] at (0,5){\tiny{$\R^5 (p \iota)_* ((\pi \iota)^* \ExtAlg^k \Omega(i))$}};
\node[right] at (0,6){\tiny{$\vdots$}};
\draw[fill=gray]  (0,0) -- (-7,7) -- (0,7) -- cycle;
\draw  (0,1) -- (-6,7);
\draw  (0,2) -- (-5,7);
\draw  (0,3) -- (-4,7);
\draw  (0,4) -- (-3,7);
\draw  (0,5) -- (-2,7);
\path[->,font=\scriptsize] 
(-9,5) edge[shorten <= .3cm, shorten >= .3cm]node[above]{$\theta_{g-k}$} (-6,3)
(-6,3) edge[shorten <= .3cm, shorten >= .3cm] node[above]{$\theta_{k+1}$} (-2,0);

\end{tikzpicture}
\caption{$f=12$, $g=6$, $d=3$, $k=3$}
\end{figure}
\FloatBarrier

Here $\theta_{g-k}$ and $\theta_{k+1}$ denote differentials on pages $g-k$ and $k+1$ respectively. 

\begin{definition}
Fix $k, i \in \bbZ, g-1 \geq k \geq 0$. Let $d:=i-k$. Define $D^{\bullet}(i,k)$ be the complex derived (as in Section~\ref{sec:GenComp}) from the above spectral sequence. 
\end{definition}

The acyclicity properties of these complexes can be proved just like in Section~\ref{sec:GenComp}. In particular, we see that only when $d=i-k<0$ we start getting intersections with the grey region, thus there is cohomology.

\end{document}